\documentclass{article}

\usepackage[english]{babel}
\usepackage[utf8]{inputenc}
\usepackage{amsfonts}
\usepackage{mathrsfs}
\usepackage{textcomp}
\usepackage{newunicodechar}
\newunicodechar{⁺}{\textsuperscript{+}}
\usepackage[
  a4paper,
  top=2cm,
  bottom=2cm,
  left=3cm,
  right=3cm,
  marginparwidth=1.75cm]{geometry}
\usepackage{authblk}
\usepackage{graphicx}
\usepackage{wrapfig}
\usepackage{tikz}
\usetikzlibrary{spy}
\usepackage{graphicx}
  \usetikzlibrary{shapes.geometric}
  \usetikzlibrary{arrows.meta}
  \usetikzlibrary{positioning}
  \usetikzlibrary{calc}
\usepackage{subcaption}
\usepackage{adjustbox}
\usepackage{placeins}
\usepackage[right]{lineno}
\usepackage{amssymb}
\usepackage{amsmath}
\usepackage{amsthm}
  \theoremstyle{plain}
    \newtheorem{theorem}{Theorem}
    \newtheorem{lemma}[theorem]{Lemma}
    \newtheorem{proposition}[theorem]{Proposition}
     
    \newtheorem{example}[theorem]{Example}    
  \theoremstyle{definition}

  \theoremstyle{remark}
    \newtheorem{remark}{Remark}
\usepackage{hyperref}
\hypersetup{%
 unicode=true,  
 pdftoolbar=true,
 pdfmenubar=true,
 pdffitwindow=false,
 pdfstartview={FitH},
 pdfnewwindow=true,  
 colorlinks=true,   
 linkcolor=red,    
 citecolor=red,    
 filecolor=magenta,
 urlcolor=blue   
}
\usepackage{cleveref}
  \crefformat{figure}{#2Figure~#1#3}
  \Crefformat{figure}{#2Figure~#1#3}
  \crefformat{table}{#2Table~#1#3}
  \Crefformat{table}{#2Table~#1#3}
  \crefformat{section}{#2Section~#1#3}
  \Crefformat{section}{#2Section~#1#3}
  \crefformat{enumi}{#2(#1)#3}
  \Crefformat{enumi}{#2(#1)#3}
\usepackage{booktabs}
\usepackage{paralist}
\usepackage[%
  sorting=none,
  citestyle=numeric-comp,
  maxnames=10,
  backend=biber]{biblatex}
\usepackage{csquotes}
\usepackage{xcolor} 
\usepackage{comment}

\usepackage{localmacros}
\newcommand{\Natural}{\mathbb{N}}
\newcommand{\Integer}{\mathbb{Z}}

\newcommand{\Real}{\mathbb{R}}

\newcommand{\Expect}{\operatorname{\mathbb{E}}}

\newcommand{\StateSpace}{\mathcal{M}}
\newcommand{\Koop}{\mathcal{K}}

\title{Equation free data-driven modelling of chaotic processes}
\author[1]{Aurora Poggi}
\author[3]{Marco Martens}
\author[2]{Hjalmar Brismar}
\author[1]{Ozan Öktem}
\author[1]{Liviana Palmisano}
\affil[1]{Department of Mathematics, KTH - Royal Institute of Technology, Stockholm, Sweden}
\affil[2]{Science for Life Laboratory, Department of Applied Physics, KTH - Royal Institute of Technology, Stockholm, Sweden}
\affil[3]{Department of Mathematics, Stony Brook University, Stony Brook, NY USA}

\begin{document}
\maketitle

\textcolor{blue}{}\global\long\def\TDD#1{{\color{red}To\, Do~(#1)}}

\begin{abstract} 
We introduce a method for constructing predictive models of non cyclic physical processes directly from time-series data, without assuming an underlying differential equation. The observations define a discrete evolution rule whose recurrent behaviour captures the essential dynamics of the process. Analysing this behaviour across multiple geometric scales leads to probabilistic models in the form of Markov chains. Hyperbolicity criteria identify when these models provide a consistent statistical description of the data. The method is inspired by, and illustrated through, the analysis of a biological imaging data set referred to as the Cell Process.
\end{abstract}

\section{Introduction}\label{sec:introduction}
A central goal in science and engineering is to construct a model for a system that allows one to understand its behaviour, explain the main features of its dynamics, and make predictions. 
Much of applied and computational mathematics is motivated by the need to support such an endeavour. 
However, in many contemporary applications, particularly those involving complex systems in biology and physics, the governing equations are unknown or inaccessible, and the available information is finite noisy time-series data. 

First principles based approaches that seek to discover underlying  fundamental governing principles through a reductionist approach 
has proven challenging to use in such a setting.
However, capability to collect an increasing amount of data combined with advances in statistical methods and machine learning has catalysed a growing interest in data-driven approaches where scientific insight is extracted directly from the analysis of data \cite{Weinan:2021aa}.
This work introduces a mathematical framework for data-driven scientific discovery rooted in dynamical systems theory. It is \emph{equation free} in the sense that it operates directly on observational data \emph{without} making assumptions on governing equations for the system that generates the data. Central to the approach is to construct a model whose orbits consist of concatenations of actually observed orbits, i.e., pieces of the observed time-series.

\subsection{Challenges with scientific discovery for complex systems}
Processes that are \emph{cyclic}, that is, they return to the same state after a certain time, are inherently easier to predict on the basis of the process's current state. 
Mathematical analysis provides a well suited framework for making such predictions. In particular, such processes might be successfully modelled in terms of differential equations. 
Many processes in physics, chemistry, biology, and engineering are cyclic, but despite this one cannot generally assume that the process is cyclic. 
This applies in particular to complex physical processes, in fact, non-cyclic behaviour can imply chaos and many non-cyclic processes of interest exhibit \emph{chaotic} dynamics.
The latter is a phenomenon already observed in simple mechanical systems, such as the double pendulum. 

In such situations, modelling becomes more challenging because the dynamics is non-periodic, and reliable prediction from a single fitted differential equation breaks down due to several issues arising from the following two characteristic properties of chaotic systems.
\begin{itdesc}
\item[Sensitivity to Initial Conditions:] 
Chaotic systems exhibit strong sensitivity to initial conditions, meaning that two experiments performed under practically indistinguishable conditions may nevertheless produce markedly different outcomes. Even when the initial state of a second experiment is arranged to be indistinguishable from that of the first, small uncontrolled perturbations may lead to drastically different time-series.
\item[Parameter Sensitivity:] 
The model which describes a chaotic physical process always depends on parameters such as temperature, mass, or external supply conditions that cannot be determined with perfect accuracy. Even very small variations in these parameters may lead to significant changes in the observed behaviour of the model and may turn a model with chaotic behaviour to a cyclic one.
\end{itdesc}

\begin{wrapfigure}{r}{0.35\textwidth}
\begin{center}
  \vskip-3\baselineskip
  \includegraphics[width=0.8\linewidth]{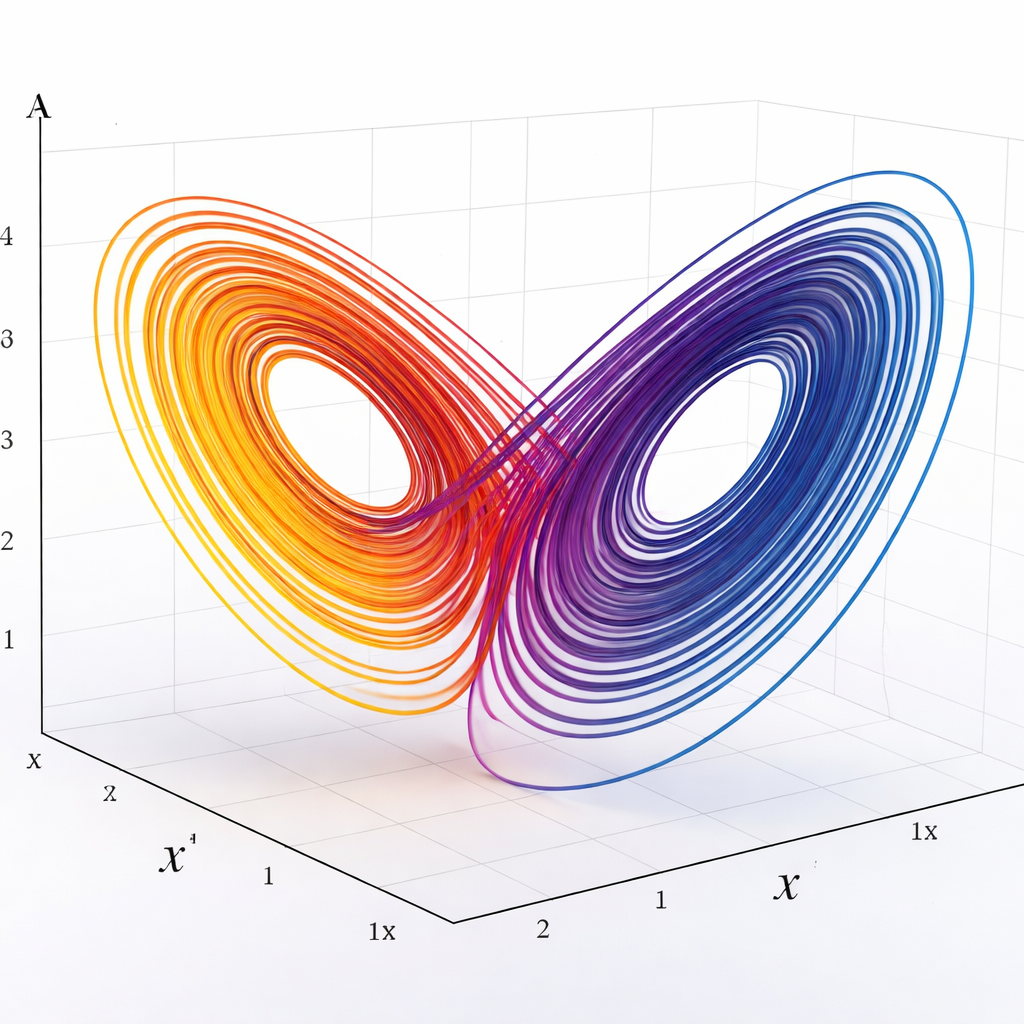}
\end{center}
\caption{The Lorenz butterfly}
\label{Chaos}
\end{wrapfigure}
Sensitivity to initial conditions indicates that even if one knows the explicit governing equations for the system one cannot characterize the actual dynamics. 
In particular, it is not possible to derive explicit solutions.
In addition, parameter sensitivity means there are no stable procedures for recovering the parameters for the governing equations. 
Stated in the language of inverse problems, \emph{both the forward and the inverse problem are ill-posed}.
A classical illustration (\cref{Chaos}) is given by the butterfly effect in the Lorenz system, see \cite{Lorenz:1963}. A general introduction to these issues can be found in \cite{GuckHolmes}. The challenge of modelling chaotic behaviour is therefore intrinsic to the dynamics itself, rather than a consequence of incomplete data or limited knowledge.

\leavevmode
\subsection{An historic account}
The difficulty of deriving explicit differential equations that accurately captured chaotic dynamical behaviour led, already in the nineteenth century, to the development of alternative approaches. Two complementary viewpoints emerged. Ludwig Boltzmann developed a probabilistic description based on statistical properties of the dynamics \cite{Boltzmann:1877}, while Henri Poincaré introduced a qualitative and geometrical approach aimed at understanding the global mechanisms governing the behaviour of solutions, rather than explicit formulas \cite{Poincare:1892}.

Both viewpoints lead naturally to the search for simpler objects that capture the long-term behaviour of the system and allow meaningful prediction. On the probabilistic side, the simplest possible stochastic model are given by finite-state Markov chains \cite{Billingsley,Feller}, where the evolution is described as transitions between a finite number of states, with future behaviour depending only on the present one. From a geometrical perspective, a fundamental class is given by hyperbolic systems, in which nearby trajectories separate in some directions and converge in others in a controlled way. This structure is stable under small perturbations, allowing the global organization of the dynamics to be understood without detailed knowledge of individual trajectories \cite{BrinGarrett,Poincare:1892,PalisTakens}.

An important feature of hyperbolic dynamics is that the qualitative and probabilistic viewpoints can be combined. Instead of following trajectories continuously, one records only the sequence of regions visited by the system. This leads to a symbolic description of the dynamics in terms of sequences of symbols. Such systems, called sub-shifts of finite type, capture the essential transitions between regions of the state space. When equipped with a suitable invariant measure, they become finite-state Markov processes. In this way, hyperbolic chaotic dynamics can be studied using probabilistic tools \cite{BrinGarrett,KatokHasselblatt,Mane}.

Unlike cyclic systems, hyperbolic systems are sensitive to initial conditions. At the same time, both are stable under small perturbations of parameters and thus provide natural models for physical processes subject to uncertainty: small errors in parameters or measurements do not alter their qualitative behaviour. Motivated by this stability, Stephen Smale conjectured that a generic dynamical system should be either cyclic or hyperbolic \cite{Smale:1967}.

While this picture holds in one-dimensional dynamics, it fails in higher dimensions. Sheldon Newhouse constructed families of systems where cyclic nor hyperbolic behaviour is generic, showing that non-hyperbolicity must be expected in general \cite{Newhouse:1974}. Nevertheless, many physical and theoretical processes can often be approximated by hyperbolic dynamics at appropriate geometric scales, and hyperbolic models remain an important point of reference. For an overview of hyperbolic dynamics and the so-called Newhouse phenomenon, see \cite{PalisTakens}.

\subsection{Underlying theory and overview of the approach}\label{sec:OverviewOfApproach}

Bearing in mind the preceding theoretical and historical background, the first step in modelling a physical process is to construct an appropriate dynamical system.
Rather than being defined by differential equations, this dynamical system is built directly from \emph{data}, namely a finite time series of noisy observations of the states of the physical process.

Inspired by the theory of hyperbolic systems, we organize the data into consecutive geometric scales. At each scale, this construction gives rise to a Markov chain whose realizations are obtained by concatenating segments of the original time series. The central question is then: at which scale does the corresponding Markov chain provide an accurate model of the process? For both very coarse and very fine scales, the resulting Markov chains are generally poor models. We therefore formulate the so called \emph{Hyperbolicity Consequences} with corresponding \emph{Validation Criteria} that identify the range of scales for which the Markov chain yields an effective description of the dynamics. At the same time, we quantify the precision of the resulting model.

These validation criteria are motivated by fundamental properties of hyperbolic dynamical systems and play a central role in the construction of the model, as will become clear throughout this section. Their formulation and proofs are presented in the Appendix.
A key important aspect that is that the procedure for constructing the model is robust: small perturbations of the data lead only to small changes in the associated Markov chain.

In conclusion, we stress once again that the model is constructed entirely from data. We make \emph{no assumptions about the underlying physical process}. The model should not be viewed as an approximation of the data, but rather as a structured organization of it.  

Before proceeding with presenting the model and its dynamics, we introduce several fundamental notions from dynamical systems theory.

\subsubsection{Basic dynamical systems theory}
Mathematically, a dynamical system is a map $f \colon X\to X$ where $X$ is a set, the \emph{state/phase space}, whose elements represent the possible states of the system. 
The map $f$ describes how the states of the system change and the orbit allows one to follow changes that a specific state $x\in X$ undergoes: 
\[
\operatorname{Orb}(x) := \bigl\{x, f(x), f^2(x), f^3(x), \dots\bigr\} \subset X
\quad\text{where $f^k := \underbrace{f \circ \ldots \circ f}_{\text{$k$ times}}$.}
\]
The orbit of a periodic point is called a {\it cycle}.
The state space $X$ is often a smooth manifold and the map $f$ is differentiable, but the state space will in our setting be a finite set that is constructed from the aforementioned time series. 

A central notion in dynamical systems theory is the concept of an {\it attractor} $A \subset X$, which is a subset of the state space where typical orbits concentrate over time. 
As such, this smaller set captures the essential structure of the long-term dynamics of the system. 
This also means that attempts at recovering the dynamical system from data can focus on recovering how trajectories of the system organise themselves on the attractor rather than the entire state space.

This above raises the question of how to analyse attractors.
As an example, cyclic behaviour would manifest itself as having closed loops in the state space toward which trajectories of the system converge (periodic attractors).
Not surprisingly, the situation is more complex for systems that are not cyclic. 
Renormalization is one possible mathematical framework for studying non-linear chaotic systems and it provides a systematic way to study the structure of attractors across different geometric scales. 
At each scale there is a partition of the attractor (\emph{dynamical partition}), see \cref{Fig8}. 
Each application of the map $f$ moves the pieces of this partition, giving rise to a symbolic description of the dynamics, see \cref{Fig7}. 
One can think of a dynamical partition as an approximation of the attractor at a given scale. 
This can be represented by an associated graph (\emph{renormalization graph}) with a characteristic structure: it consists of many loops sharing a common vertex.
An orbit of the system can then be viewed as a sequence of points moving between the elements of the partition, and therefore corresponds to a path in the renormalization graph. 
This leads to a symbolic model of the dynamics: the set of all possible paths in the graph forms a dynamical system on a discrete space of symbols that forms a \emph{sub-shift of finite type}. 

In general, given a dynamical system, renormalization will create at each geometrical scale a symbolic system.  These symbolic systems are in general richer than the original given dynamical system as they might contain paths that do not correspond to actual orbits of $f$. 
These renormalization sub-shifts are themselves hyperbolic systems, see \cref{Fig7}.  To summarize, renormalization produces a sequence of hyperbolic models that approximate the attractor at finer and finer scales. 
If the attractor is hyperbolic, there exists a scale at which this symbolic model is equivalent to the original dynamics on the attractor.
\begin{figure}[tbh!]
\centering
\begin{subfigure}{0.47\linewidth}
    \includegraphics[width=\linewidth]{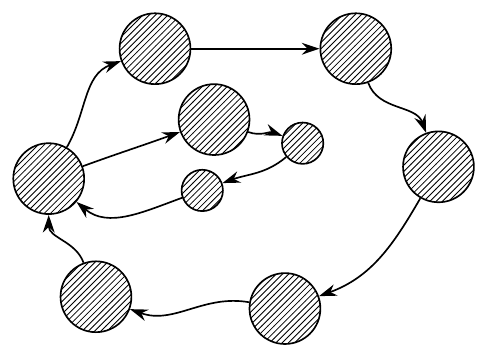}
    \caption{Dynamical partition.}
    \label{Fig8}
\end{subfigure}
\hfill
\begin{subfigure}{0.47\linewidth}
    \includegraphics[width=\linewidth]{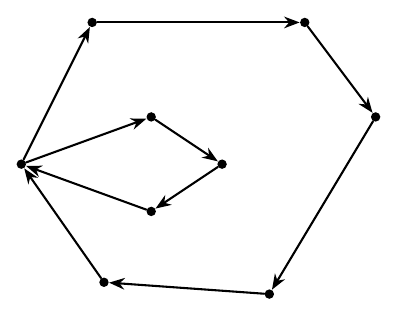}
    \caption{Renormalization sub-shift.}
    \label{Fig7}
\end{subfigure}     
\caption{Illustrating a dynamical partition and the corresponding renormalization sub-shift.}
\label{Fig7and8}
\end{figure}

While the symbolic description captures the combinatorial and geometric structure of the dynamics, it does not by itself provide quantitative information about typical behaviour. 
To address this, one introduces the notion of a \emph{physical measure}. 
To explain this notion, consider an observable represented by $\varphi \colon X \to \Real$, i.e., in each state $x\in X$ one can make a specified measurement giving rise to the value $\varphi(x)$. Given a starting point $x\in X$ one can repeatedly make this measurements along the orbit of $x$. If the system is chaotic then the series
\[ n\mapsto\varphi_n(x) := \varphi\bigl(f^n(x)\bigr) \]
typically behaves in an irregular and unpredictable way. 
Then the \emph{time average} 
\[ 
\lim_{T\to\infty}\frac{1}{T}\sum_{i=0}^{T-1}\varphi\bigl(f^i(x)\bigr)
\]
is one of the few meaningful numbers that one can associate to the observable $\varphi$. 
In many cases, this time average converges to the space average determined by a probability measure $\mu$ supported on the attractor.
This measure is called a \emph{physical measure} and, for typical points $x$ we have
\begin{equation}\label{eq:Birkhoff}
\lim_{T\to\infty}\frac{1}{T}\sum_{i=0}^{T-1}\varphi\bigl(f^i(x)\bigr)
=\int\varphi d\mu.
\end{equation}
This result, known as the Birkhoff Ergodic Theorem, provides a bridge between the dynamics of individual trajectories and statistical predictions \cite{Birkhoff}. 
Observe that the time average is independent of the starting point $x$, i.e., the orbits are distributed throughout the attractor in the same way and this distribution is described by the measure $\mu$. 

Together, the above concepts form a framework needed to construct meaningful statistical models of non-cyclic processes directly from data which are noisy time-series observations from a physical process. Note that the recovery of such statistical models that describes the observed data at a specified scale  is a stable procedure even in the setting when the system exhibits chaotic behaviour.

\subsubsection{Guiding principle for constructing a generative model}\label{Sec:HypCons}
The cycles in the attractor $A$ play a central role in the construction of the model. A chaotic system typically contains a vast number of periodic orbits, but they are not all equally informative. Suppose for a moment that the physical process were perfectly described by a hyperbolic dynamical system with a physical measure. Then already a small, finite collection of periodic orbits would reproduce the long-term statistics of the entire system to any prescribed accuracy. We refer to such orbits as \emph{typical} cycles. The remainaing periodic orbits, the vast majority, do not reflect the statistics of the system and are therefore statistically irrelevant. They are however , also almost never observed in practice. Since the cycles used in the construction of $A$ arise from actual measurements and observations, it is natural to regard them as candidates for typical cycles and to investigate whether they indeed capture the statistical behavior of the system.

As explained above, and motivated by Smale's conjecture that generic dynamical systems are hyperbolic, we use hyperbolicity as a guiding principle throughout the construction of the model. Our objective is not to prove that the physical process is hyperbolic, something that is impossible to establish from a finite data set, but rather to determine whether the observed dynamics exhibit at some scale characteristic properties of hyperbolic systems. To this end, we derive several consequences of hyperbolicity and formulate corresponding Validation Criteria.

These criteria play a dual role. First, they provide a systematic procedure for assessing whether the observed dynamics resemble those of a hyperbolic system. At each stage of the construction, satisfaction of a criterion supports the continuation of the model-building process, whereas failure of a criterion may indicate either insufficient data or a more fundamental issue of the physical process. Second, the criteria allow us to distinguish statistically relevant periodic orbits from the vast collection of periodic orbits present in a general chaotic system. Indeed, while chaotic systems typically possess infinitely many periodic orbits, only some of them share common statistical properties and collectively capture the statistical behavior of the system. The Validation Criteria are designed to identify such families of orbits. We check that the cycles extracted from the data satisfy these criteria and consequently exhibit the expected common statistical features. This provides evidence that the observed cycles belong to the physically relevant class of periodic orbits and can therefore be used to approximate the underlying physical measure.

\subsubsection{Outline of the method}\label{sec:Outline}
We here present an outline of the data driven method for constructing a probabilistic generative model for a dynamical system from finitely many observations.  
The full detailed description is given in \cref{sec:method}.

As already mentioned, we assume that data represents a time series of noisy observations generated by a physical process.
More precisely, at each time instance we record a data point, and all these data points lie in the same high-dimensional Euclidean space. 
The dimension depends on the number of measurements that are recorded at each time instance. 
The collection of all these points forms the state space $X$ and one should anticipate that the state space may contain a very large number of points, possibly millions.
Since data are sequential, they naturally define a dynamical system $f \colon X\to X$: the map $f$ 
simply sends each observed state to the state observed at the next time instance.". 

\paragraph{The attractor:}
The first step is to determine the attractor for the aforementioned dynamical system, which in this discrete setting consists of finitely many periodic orbits where each periodic orbit carries an invariant measure.

Data obtained by actual measurements never return to exactly the same state, so periodicity is here understood at a given geometric scale: an orbit that returns sufficiently close to its starting point is regarded as a cycle at that scale.

Before proceeding, it is important to emphasize that the attractor is expected to be much smaller than the original state space, forming a reduced subset that nevertheless captures the essential dynamics of the system. 
This expected compression is motivated by the fact that in dissipative systems, i.e., systems with friction, one expects the attractor to occupy only a small portion of the state space.

\paragraph{Renormalization and Markov chains:} The renormalization process, as described in the previous paragraph, can be applied to the attractor and produces, at consecutive geometrical scales, a renormalization sub-shift of finite type. The invariant measure associated with the periodic orbits then induces a Markov structure on these symbolic systems.  
Left is to select the appropriate geometrical scale at which the corresponding Markov chain is a reasonable model. In this way from the consecutive geometrical scales we obtain a graded set of Markov chains. 

\paragraph{Entropy as notion of model complexity:}
Entropy is a fundamental quantity that quantifies the complexity of a system, intuitively, how much new information the system produces per step as it evolves. In accordance with the guiding principle underlying our construction, each stage of the model construction is motivated by the behaviour of hyperbolic dynamical systems with physical measures. In this setting, the entropies of the Markov chains associated with different scales vary in a controlled manner, departures from a controlled behavior indicates  that a scale is too coarse or too fine. This observation leads to an additional validation criterion, which can be used to identify the scale at which the Markov chain provides the most appropriate description of the physical process, thereby selecting the relevant model.

\paragraph{Summary and conclusions:}
The states of the aforementioned Markov chain model correspond to points in the original state space. 
Since their coordinates are given by actual measurements, they can be used directly to make probabilistic predictions about the evolution of the process.
Hence, the Markov chain is a probabilistic generative model that describes the long-term behaviour of the physical process at a specified geometrical scale.

It is important to note that reliable statistical conclusions require sufficiently long time series. 
Even in simple random processes such as coin tossing, one needs a large number of trials before observing the asymptotic behaviour predicted by the central limit theorem. 
Similarly, in chaotic dynamics, meaningful statistical properties emerge only over long time scales. 
If the hyperbolicity-based validation criteria are not satisfied during the construction, this may indicate that the available data are insufficient.

\subsubsection{Detecting and recovering emergent properties from data}\label{subsec:LinkToIP}
A key feature of physical systems that evolve in time is that beyond a certain complexity, they acquire emergent properties—characteristics of the system's collective behaviour that cannot be understood by studying individual components alone, one needs to study the whole system's dynamics.
Examples of such emergent properties are phase transitions, pattern formation, synchronization phenomena, self-organization, or critical phenomena at phase transitions.

A particularly well-known setting is hyperbolic system with a physical measure, which are known to behave like a Markov chain.
A natural task is therefore to determine whether data generated by a physical system displays such behaviour at an appropriately chosen scale. 
If this turns out to be the case, then this Markov model constitutes an emergent property for the physical process.
For example, at that scale all parts of the physical process, in our application,  all regions of the cell population, behave statistically in the same way. Such a property cannot be understood by studying any single part in isolation, it belongs to the system as a whole. A related task is to recover the aforementioned Markov model from the observed data in a stable manner.
The latter simply means that small variations in data results in small changes to the recovered Markov model.

The method outlined in \cref{sec:Outline} provides tools for both tasks above: deciding whether the physical process behaves like a Markov chain and, if so, recovering that Markov chain.

The reasoning behind the first task deserves to be spelled out, since it lies at the heart of the method. The renormalization procedure of \cref{sec:Outline} can be applied to any time series, and it always produces a graded set of Markov chains; producing a Markov chain is therefore not, by itself, evidence of anything. However, among all Markov chains that could conceivably come out of this procedure, only a very small class can arise from data generated by a hyperbolic dynamical system with a physical measure. This class is characterised by the Hyperbolicity Criteria, which are concrete conditions that can be checked in practice. A time series picked at random would almost certainly fail them. Hence, if the graded set of Markov chains obtained from the observed data do satisfy the criteria, the coincidence is too improbable to dismiss as incidental, and we conclude that the data are generated by a physical system that behaves like a Markov chain. This is the emergent property one typically expects to see in hyperbolic dynamical systems. A formal statement of this argument is given in \cref{sec:method}.

To summarize the above, the inverse problem of recovering the governing equations of a physical system from a single observed noisy time series that is generated by this system is highly ill-posed, meaning that there are many possible equations that ``fit the data'' and a small change to data is likely to result in very different set of equations.
One strategy is therefore to focus on recovering a property of the physical system generating the data, which here is a Markov chain that models its dynamics and thus represents an emergent property. 
However, before attempting this through renormalization, one must first test whether it is possible, i.e., whether the physical system generating data can be well approximated by a hyperbolic system. 
The hyperbolicity criteria offers a test for this and they are also used to single out the appropriate Markov model from the graded Markov chains obtained by applying renormalization to the data.
As this process is stable with respect to perturbations to data, the entire process can be seen as a ``regularization'' where renormalization is used to rephrase the initial inverse problem. The hyperbolicity criteria are used to single out the appropriate scale where the associated Markov chain constitutes the emergent property of the system that one can infer from data.

\subsection{Application to scientific discovery in cell signalling dynamics}\label{sec:CellProcess}
To demonstrate the data-driven approach outlined in \cref{sec:Outline}, we consider a scientific discovery task drawn from cellular and molecular biology: modelling cell signalling dynamics from microscopy data. This is central in understanding the complex and intricate behaviours of living cells. It will also serve as a running example throughout the paper, illustrating each step of the method in turn.

As with any data-driven approach, the success of the approach outlined in \cref{sec:Outline} rests on having access to sufficient amount of observations, which for this use case are measurements of spatiotemporal cell signalling dynamics. Obtaining such measurements with high precision and sensitivity requires advanced imaging technologies, and emerging tools for in situ spatiotemporal monitoring have significantly enhanced the ability to gather such data from living cells \cite{Allport:2001aa,Hsieh:2025aa,Zhuang:2026aa}.
Unlike traditional experimental methods that often rely on fixed time-point analyses, such in situ monitoring offers dynamic, real-time visualization of cells, allowing researchers to investigate processes such as proliferation, differentiation, and migration under physiological conditions \cite{Wang:2008aa}.
Live-cell imaging, a groundbreaking development in this area, enables the continuous observation of cellular activities without disturbing the natural cellular environment \cite{Schnell:2012aa}. 
Fluorescence microscopy, a core technique in live-cell imaging, has undergone major advances, delivering unprecedented spatial and temporal resolution. Using fluorescent probes and dyes, researchers can selectively visualize specific cellular components, track molecular interactions, and explore dynamics within cells (at the single-cell level) or in-between cells in a population. 


The above remarkable progress in fluorescence microscopy based techniques for live-cell imaging can with appropriate choice of fluorophore be used to image the dynamically changing Calcium (Ca${}^{2+}$) levels within a living cell population. Oscillations in these play fundamental roles in various cell signalling processes and have been the subject of numerous modelling studies, like in \cite{Kowalewski:2006aa} that attempts to model these oscillations with differential equations based on live cell data from light-sheet fluorescence microscopy.
We will showcase how our data driven method outlined in \cref{sec:Outline} can be used to address the scientific discovery task of modelling the oscillations of these Calcium levels without introducing any a priori differential equations.  
\Cref{sec:DescExp} gives a more detailed account of the particular cell population and how this data is collected.

\subsection{Related methods}\label{sec:OtherMethods}
Recovering a dynamical system from time series data that represent finitely many noisy observations of its states is a central task in scientific discovery.
Scientific progress has for centuries relied on manually deriving such models from empirical observations  \cite{Kepler1609, Newton1687, Fourier1822, Ohm1827, Fick1855}.

The above is also a long-standing research area within the computational sciences \cite{HoKalman1966, BellmanAstrom1970, Langley1981, Langley1987, NarendraParthasarathy1990, BongardLipson2007, Schmidt:2009aa, Simpkins2012}, where most attention has been directed at recovering time-continuous dynamical systems modelled by a differential equation whose flow map is sampled in time. 
Time-continuous dynamical systems are typically defined as an ordinary differential equation governed by a vector field $f \colon \StateSpace \to \Real^n$ on a state space $\StateSpace\subseteq\Real^n$ with associated flow $\varphi_t \colon \StateSpace \to \StateSpace$.
The flow relates to the vector field through an initial value problem whose solution defines a flow $t\mapsto\varphi_t(z)$.
The observed time series data $x_k = h\bigl(\varphi_{t_k}(z_0)\bigr)+e_k \in \Real^n$ for $k=1,\ldots, m$ with $t_1< t_2 < \ldots < t_m$ are observations of this flow with $e_k$ denoting the (random) observation error and $h \colon \StateSpace \to \Real^n$ is the observation map. 
This map is the identity when the full state is observed, but it can be non-injective when only part of the state is observed.

The task of reconstructing a dynamical system from the time series data $x_1, \ldots, x_m \in \Real^n$ is traditionally phrased as the task of recovering the vector field $f$, or equivalently the flow $\varphi_t$. 
Existing methods for addressing this task are typically divided into \emph{parametric} and \emph{equation-free} methods.
Parametric methods fix a priori a family $\{f_\theta\}_{\theta}$ of admissible vector fields and the task is now to recover one member of this family.
Equation-free methods assume no such functional form and instead return a surrogate that summarises the dynamics, such as 
\begin{inparaenum}[(a)]
\item invariant sets along with their stability type and the orbits connecting them, 
\item Markov approximations,\label{item:OurMethod}
\item the Koopman or transfer operator and its spectral data, and
\item a topological or order-theoretic skeleton of the global dynamics.
\end{inparaenum}  
\emph{The method developed in this paper (outlined in \cref{sec:Outline,subsec:LinkToIP}) falls into category \cref{item:OurMethod} above.} In the following, we here survey some of the other approaches.

\subsubsection{Parametric methods}
Classical approaches fix the governing equations, so reconstruction reduces to parameter identification.
This inverse problem is often ill-posed \cite{BellmanAstrom1970, CobelliDistefano1980, DistefanoCobelli1980, NguyenWood1982, Miao2011}, and early work regularized it by restricting the function class to a narrow family described by a few physical parameters \cite{Alessandrini1986, Acar1993, Knowles2001}. 
However, such a strong prior limits applicability of the method.

\emph{Sparse regression} widens the scope by replacing the narrow function class with a parsimony criterion encoded by sparsity-promoting regularization.
This idea of exploiting sparsity in solving ill-posed inverse problems has a rich history in mathematics.
Parts of it can be traced back to Beurling's 1938 work on ``minimal extrapolation'' \cite{Beurling:1938aa} and it has since then evolved into a rich theory for solving ill-posed inverse problems \cite{Santosa:1986aa, Donoho:1992aa, Candes:2006aa, Scherzer:2009aa, Foucart:2013aa}.
Its application to parameter identification in dynamical systems from data $y_i\approx\dot x(t_i)$ was promoted with the introduction of SINDy \cite{Brunton:2016aa} where sparsity implied seeking a model that is expressible as a linear combination of as few elements as possible from a fixed family of functions (dictionary).
Stated formally, one assumes there is a dictionary $\{\phi_k\}_{k=1}^{s}$ and the task is to recover a representation $f\approx\sum_k\alpha_k\phi_k$ in $\dot x=f(x)$ with as few active terms as possible, i.e., $\hat{\boldsymbol\alpha}=\arg\min_{\boldsymbol\alpha}\lVert\Phi\boldsymbol\alpha-\mathbf{y}\rVert_2^2+\lambda\lVert\boldsymbol\alpha\rVert_0$ with $\Phi_{ik}=\phi_k(x_i)$.
However, the $\ell_0$-problem is non-convex and NP-hard \cite{Natarajan1995}, so a common approach is convex relaxation that replaces the problematic $\ell_0$-term with a convex $\ell_1$-term, which in turn can be solved with proximal gradient methods.
SINDy's sequentially thresholded least squares approach is hard thresholding with an exact solve on the active support.
The $\ell_0$-problem is however only equivalent to its convex relaxation under coherence conditions that are hard to verify \cite{DonohoElad2003, CohenDahmenDeVore2009}. 
Extensions of SINDy cover PDEs \cite{Rudy:2017aa}, richer dictionaries \cite{Kaheman:2020aa, Purnomo2023}, ensembling \cite{Fasel:2022aa}, control, implicit and non-smooth dynamics \cite{Fasel2021, Quade2018}, and weak formulations that avoid differentiating noisy data (to generate the $y_i$ values)  \cite{Messenger2021WeakSINDyODE, Messenger:2021aa, Qian2022}, with applications ranging from plasma physics to epidemiology \cite{Kaptanoglu:2021aa, Lagergren:2020aa, Horrocks:2020aa}.
A key aspect of sparse regression is the need for a dictionary that sparsifies the true dynamics, which is also what carries the interpretability, and a parameter selection rule for setting the value of the regularization parameter $\lambda>0$ given an estimate of the noise level in data. See \cite{Fung:2025aa} that selects dictionary atoms through marginal likelihood and thereby sidesteps the need for having an explicit parameter selection rule.

\emph{Symbolic regression} drops the dictionary and searches over expression trees, thereby recovering structure and parameters jointly \cite{North2023, YuWang2024, Makke:2024aa}.
The search is NP-hard \cite{Virgolin:2022aa}, so beyond short expressions \cite{Bartlett:2023aa} one resorts to heuristics: genetic programming \cite{Koza1992, Schmidt:2009aa, Cranmer2023}, continuous relaxations over compositional bases \cite{MartiusLampert2017, Sahoo2018, Scholl2025}, or reinforcement learning \cite{Petersen2021}, often combined with pre-trained transformers and language models \cite{Biggio2021, Kamienny2022, Grayeli2024, Shojaee2025}.
Benchmark studies are given in \cite{LaCava:2021aa} and applications to the discovery of ``natural laws'' in \cite{Schmidt:2009aa, Hillar:2018aa, Liu:2021aa, Ahmadi:2023aa}.
SINDy can be seen as symbolic regression restricted to a linear ansatz over a fixed library, which buys convexity and speed at the price of expressiveness.

\emph{Neural parametrisations} keep this setting but forfeit interpretability. 
Most of these methods use the observed time series data $x_k$, which is unsupervised, to manufacture supervision from the time ordering by forming input/output pairs $(x_k,y_k)$ with $y_k=x_{k+1}$.
One may represent the vector field $f$ with a deep neural network \cite{Raissi2018, Chen2018, Greydanus2019, Cranmer2020} that is trained by a trajectory rollout loss that admits irregular sampling and suits time-continuous data.
Another option is to represent the flow map with a deep neural network \cite{Bilos2021, Canizares2024} that can be trained by a cheaper one-step loss whose errors compound over the rollout.
Operator learning lifts the learning to function space \cite{Lu2021, Kovachki2023}, where Fourier neural operator architectures \cite{Li2021} are a popular choice.
One can also set up learning that accounts for side information, which can be any knowledge about the dynamical system one seeks to learn, besides trajectory data \cite{Ahmadi:2023aa}.
Furthermore, much attention has been devoted to architectures that account that embed handcrafted physical laws \cite{Cornelio2023, Yu:2023aa, Srivastava2025} or symmetries \cite{UdrescuTegmark2020} improve generalization, and symbolic distillation partly restores interpretability \cite{Cranmer:2020aa, LaCava:2021aa}, but extrapolation and hallucination still remain as barriers to adoption \cite{Vafa2025}.
Other approaches use deep learning to resolve the differences between the true dynamics of the system and the dynamics given by a model of the system that is either inaccurately or inadequately described \cite{Qraitem:2020aa}.
Applications include weather forecasting, fluid dynamics, and plasma modelling for nuclear fusion \cite{Kurth2023, Gopakumar2024}.

On a final note, an inverse problems perspective of dynamical system recovery is given in \cite{Engl2009}, which also treats \emph{qualitative inverse problems} where the goal is to identify regions in parameter space that correspond to dynamical systems exhibiting some prescribed qualitative dynamics, like bistability, limit cycles, or a given bifurcation, rather than fitting a trajectory.
The dual question of which data determine which qualitative dynamics regime is addressed in \cite{Duan2023}.

\subsubsection{Equation-free methods}\label{sec:eqfree}
Here the prior question is which features of a dynamical system are (stably) identifiable from time series data. 
A survey that organises the answer around the transformation law, invariant sets, the invariant measure, the Koopman or transfer operator, and Markov approximations, together with convergence results in different topologies, is provided in \cite{BerryDas2025}.

Formal identifiability is settled for linear systems \cite{Stanhope2014, Qiu2022, Casolo2025} and, through necessary and sufficient conditions, for broad non-linear classes of ODEs and PDEs \cite{Scholl2022, Scholl2023, Hauger2024}.
A recent paper also shows somewhat counter-intuitively that chaos, which is a property typically associated with unpredictability, is a crucial ingredient for ensuring that a dynamical system is identifiable \cite{Shumaylov2026}.
However, none of these works address the issue of instability in recovering the dynamical system.

\emph{Delay embeddings} recover a state space from a scalar record.
Takens' theorem \cite{Takens1981, SauerYorkeCasdagli1991, Takens2010} provides the theoretical basis stating that for a suitably chosen delay $\tau$ and dimension $m$, the map $q(t)=\bigl(x(t),\ldots,x(t-(m-1)\tau)\bigr)$ embeds the attractor of the dynamical system.
The embedding is, however, expressed in coordinates other than the states, so quantities that are not invariant under smooth conjugacy, like components of the vector field $f$, are not possible to recover within this framework.
Moreover, it may be arbitrarily ill-conditioned, so Takens' theorem yields a form of injectivity but not well-posedness.

\emph{Koopman learning} lifts the dynamics to observables, where $[\Koop_t h](z)=h(\varphi_t(z))$ that acts linearly even when $f$ does not \cite{Mezic2005, BudisicMohrMezic2012, Brunton2022, Colbrook:2026aa}.
The price for linearity is that the Koopman operator is infinite-dimensional, and since $\Koop_t$ is neither compact nor normal, its finite compressions need not converge in norm.
Dynamic mode decomposition (DMD) is the special case that fits a linear map to snapshot pairs \cite{Schmid2010, Kutz2016}, whereas extended dynamic mode decomposition (EDMD) is a family of data-driven operator approximation methods that extend the standard DMD framework by lifting finite-dimensional state data into a finite-dimensional subspace of observables (dictionary functions) and seeking a finite-dimensional approximation of the (infinite-dimensional) Koopman operator that governs the evolution of observables along the flow.
One can show that EDMD converges strongly as the number of data points and number of dictionary atoms grow \cite{KordaMezic2018}.
EDMD with Galerkin regression over a dictionary with empirical quadrature is provided in 
\cite{WilliamsKevrekidisRowley2015}.
ResDMD replaces the unstable recovery of eigenvalues of the finite compressions of the Koopman operator by the stably recoverable residual \cite{ColbrookTownsend2024}, and under partial observation the dictionary comes from delay coordinates \cite{ArbabiMezic2017}.
Other variants on Koopman learning is Koopman autoencoders \cite{Lusch2018, OttoRowley2019}, which amount to EDMD with a learned dictionary whose linearity penalty is the EDMD residual.

Using pointwise losses in recovering a dynamical system beyond the Lyapunov time-horizon is meaningless. 
Here, the only reasonable target left to recover is the invariant measure, which is what distributional training against optimal-transport or MMD discrepancies aims at in \cite{BotvinickGreenhouse2023, Schiff2024}, while score-based, diffusion, and latent-SDE formulations model the observation process instead of assuming it away \cite{Gilpin2024}.
The approach outlined in this paper is also an instance for building a Markov model from the invariant measure that is recovered from data.

A notable deep learning based approach is \cite{Floryan:2022aa} which uses a neural network to learn a manifold representing a dynamical system's intrinsic state variables directly from time-series data.
Another deep learning based approach for non-parametric learning of the governing structures of collective dynamics is given in \cite{Zhong:2020aa}. 
Finally, \cite{Kacprzyk:2025aa} uses large language models for direct semantic modeling, which is the task to predict the semantic representation of the dynamical system (i.e., a natural language description of its qualitative behavior) directly from data.

\emph{Conley index theory} \cite{MischaikowMrozek2002} confronts the discontinuous dependence of invariant sets on $f$ by replacing the invariant set with an isolating neighbourhood that is stable under perturbation.
In particular, one can on such a neighbourhood certify equilibria, periodic and connecting orbits, and semiconjugacy onto a subshift with an entropy bound.
Data enter through an outer approximation on a cubical grid, so every consistent system is a selector and each conclusion is a theorem about the unknown system \cite{Mischaikow1999, Batko2020,MischaikowActa2002,KaczynskiMischaikowMrozek2004}.
An accurate piecewise-linear network determines the Morse decomposition and nothing finer \cite{GameiroGelbMischaikow2025}, and learning a qualitative descriptor directly \cite{KacprzykVanDerSchaar2025} is the learning-theoretic counterpart of this observation.

On a final note, most of the above listed equation-free methods rely on numerical algorithms for analysis of dynamical systems and their bifurcations \cite{Guckenheimer2002, DellnitzJunge2002}.

\section{Detailed description of the method}
\label{sec:method}
The goal is to organize the data. This organization is in the form of Markov chains for consecutive geometrical scales and the model is one of these chains.
The Guiding Principle allows to construct the Markov chains as long as the validation criteria are satisfied up to a fixed \emph{uncertainty threshold}. The requirements of the application determine what should be regarded as uncertainty threshold.
\begin{example}[Cell signalling dynamics]
The uncertainty threshold is set to $\delta=0.25$ for data from the cell signalling dynamics (\cref{sec:BiologicalUseCase}).
\end{example}
\subsection{Time-series} 
Consider numerical measurements of an observable of a physical process. 
Usually there are many relevant observables, however one needs instruments to be able to make measurements. 
In particular it is often impossible to give a complete description of the process, since we have a limited set of instruments. Let $D$ be the number of accessible observables. 

Suppose that there is a set of observables such that our physical process allows to make repeated measurements of these observables. 
In principle the measurements might be made continuously, but in practice at the moment that the measurements will be processed they will consists of finite number of time points.  Hence, for all practical purposes we may assume that  measurements are obtained at discrete time intervals of length $\Delta T$.
This gives rise to time series, say of length $T$:
\[ 
\varphi \colon \bigl\{1,\dots,T\bigr\}\mapsto \Real^{D}.
\]
\begin{example}[Cell signalling dynamics]
Observations (data) is a sequence of $512\times 512$ grey-scale images, i.e., a time series of digitised 2D images (movie), see \cref{sec:BiologicalUseCase}. 
More specifically there are $4\,318$ such images with $\Delta T=5$~sec, i.e., $T=4\,318$ and $D=512\times 512$.
\end{example}

\subsection{Projected time-series}\label{proj}
 In general, the higher the dimension of the data, the more data points are needed for statistical inference to succeed.
It may therefore happen that the dimension $D$ of the time series $\varphi$ is too large relative to its length.
We present here a method that applies when each data point represents a continuous function: it reduces the dimension to some $d < D$ without discarding measurements. 
Instead of throwing data away, we rearrange it, and in doing so simultaneously increase the number of time series, as explained in the next paragraph. 
Conceptually, the procedure therefore trades ``resolution in state space'' for a larger number of time-series samples. 
Note that this step can be skipped whenever the dimension $D$ is already manageable.

Choose a sequence of physically relevant projections $\pi_k \colon \Real^{D}\to \Real^{d}$ for $k=1,\dots,K $. These projections create new time series (called \emph{projected time series}) by $\varphi_{k}=\pi_k\circ\varphi$.
The set of projected time series will henceforth be denoted by $\Phi=\{\varphi_k\}_{k=1,\dots,K}$.
There is no a general rule for how to choose `informative' projections, but the specific physical process might provide an indication of natural choices. 

The use of projected time series has two advantages. It increases the number of time series while reducing the resolution insignificantly and the second advantage is that it allows to incorporate chain recurrence in the model. In particular, it gives the possibility to use Markov chains as models. Chain recurrence is discussed in details in \cref{RevAttr}.

\begin{example}[Cell signalling dynamics]
The data in \cref{sec:BiologicalUseCase}, i.e., a time series of $512\times 512$ pixel images, is projected to a coarser image. 
To do so we divide each image into $16\times 16$ smaller squares and then choose a single pixel from each square. Each cell corresponds roughly speaking with a square.
We call this selection of pixels a \emph{grid}, and each grid defines a projection. 
In our specific example we chose $1\,000$ random grids, i.e., $K=1\,000$. 
In particular, there are projections 
\[ 
\pi_k \colon \Real^{512\times 512}\to\Real^{16\times 16} \quad\text{for $k=1,\dots, K$.}
\]
Depending on the specific needs one might need to design more elaborate projection methods, i.e. where the projections are time dependent. For example one could take the pixels of the grid inside pre-described cells. 
The projections will be time dependent because the cells are moving. To illustrate the method we use the presented projections.
\begin{figure}[!htb]
\centering
\includegraphics[width=0.6\textwidth]{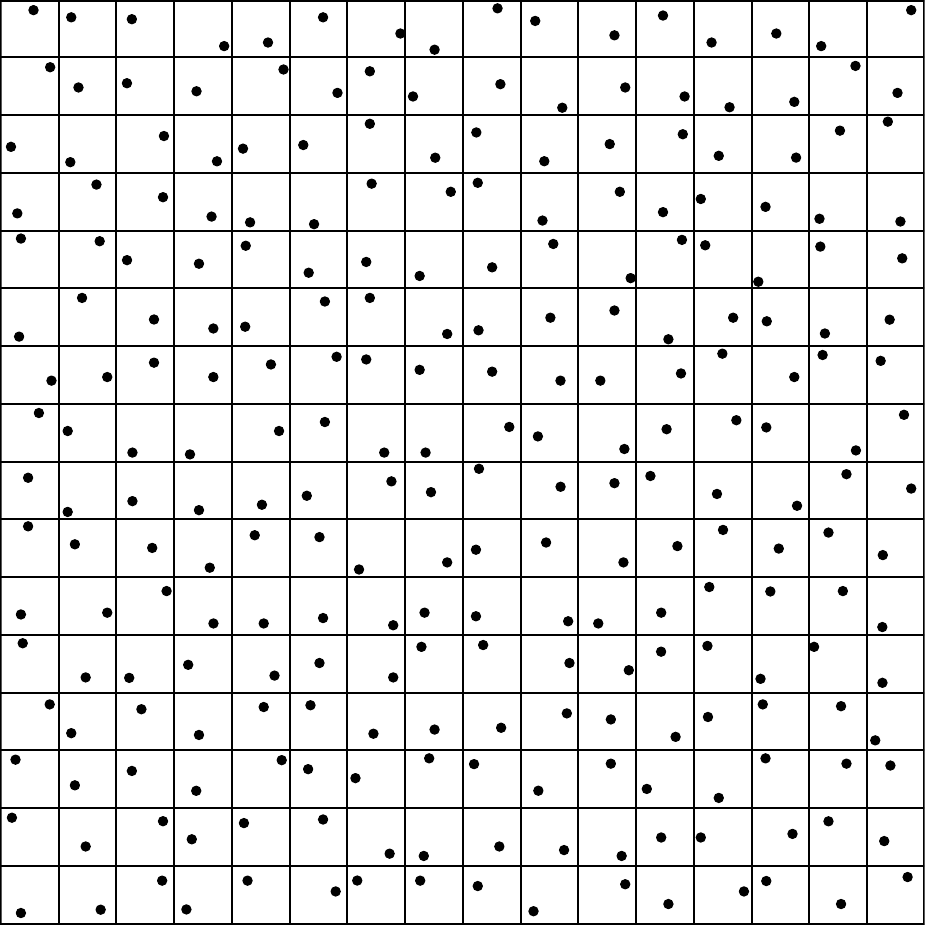}
\caption{Example of a $16\times 16$ grid generated by randomly selecting pixels (dots), one in each square, in a $512\times 512$ pixel image.}
\label{Fig2}
\end{figure}
\end{example}

\subsection{The discrete differential equation}\label{DDE}
The first task  is to construct a discrete dynamical system that presents the projected time-series $\Phi$ above. The state-space $X$ for such a dynamical system is as follows:
\[
X=\bigcup_{k}\varphi_{k}\bigl(\{1,\dots,T\}\bigr)\subset\Real^d.
\]
Observe that the size of the state space $X\subset\Real^d$ is $K\times T$. The evolution of the process is described by a map
$f \colon X\to X$, where given a point $x\in X$, the image $f(x)$ is the successor of it. 
\begin{description}
\item[Successor Assumption:]
Every point $x\in X$ has a successor in $X$ which is denoted by $f(x)$.  
\end{description}

We next describe how to construct the successor. Each point in $X$ is of the form $x=\varphi_k(t)$ with $t\leq T$. Hence, if $t<T$ then the only natural choice for the successor is
\[
f(x)=\varphi_k(t+1)\in X.
\]
If $x=\varphi_{k}(T)$ it  is at the end of a time series and there is no natural successor as in the case when $t<T$. However, Successor Assumption assures that a successor always exists. How do we choose it explicitly? 
The successor $f(x)$ of a point $x=\varphi_{k}(T)$ is defined as the closest point $y\in X_s$ where
\[
X_s=\bigcup_{i,k}\varphi_{k}(\left\{1,\dots,s\right\})\subset X\subset \Real^d
\]
and $s\le 0.75 \cdot T$. The motivation for the choice of $f(\varphi_{k}(T))\in X_s$ is the following. Most likely the closest point in $X$ to $\varphi_k(T)$ is $\varphi_{k}(T-1)$. However, this point is really the predecessor of $\varphi_k(T)$ and we need the closest point in the future. To address this issue we chose the closest point from the first sections of the time series defined by $s$. Formally we define the dynamical system $f \colon X\to X$ as follows. Choose $s\le 0.75 T$. Then 
\[
f(x)=\begin{cases}
\varphi_{k}(t+1) &\text{ if $x=\varphi_{k}(t)$ with $t<T$}
\\
\varphi_{\hat{k}}(\hat{s}) & \text{ if $x=\varphi_{k}(T)$,}
\end{cases}
\]
where $\varphi_{\hat{k}}(\hat{s})\in X_s$ is the closest point to $x$, namely, 
\[
d\bigl(\varphi_{\hat{k}}(\hat{s}) ,x\bigr)=\min\bigl\{d(z,x)\bigl|\bigr.z=\varphi_{m}(t), t\leq s,m\leq K\bigr\}.
\]
Observe that the point $\varphi_{\hat{k}}(\hat{s})\in X_s$ is not uniquely defined. In case of multiple choice one has to specify the definition of $\varphi_{\hat{k}}(\hat{s})$. 

To gain the proper intuition for what a discrete dynamical system is, one can reformulate its dynamics as a discrete differential Equation
\[
\Delta x= V(x) \Delta t,
\]
where the discrete vector field is given by 
\[
V(x)=\frac{f(x)-x}{\Delta t}.
\]
Another example of a commonly used discrete dynamical system occurs when one simulates numerically the solutions of a differential equation. Basic examples are the Runge-Kutta method or the Euler method. 

The next subsections discuss tools to analyse the dynamics of the 
discrete dynamical system $f \colon X\to X$.  

\begin{example}[Cell signalling dynamics]
We choose $s=0.5 \cdot T$ for the data in \cref{sec:BiologicalUseCase}, so the number of points in the state space $X$ is $4\,318\,000$. 
A two dimensional projection of $X$ was obtained by projecting to coordinates 113 and 97, see \cref{Fig:4b}. Moreover there were no multiple choices for the points $\varphi_{\hat{k}}(\hat{s})$ at minimal distance to $x$. The distance used on $\Real^d$, $d=256$, is the max distance.  
\end{example}
\begin{figure}[h]
\centering
\includegraphics[width=0.6\textwidth]{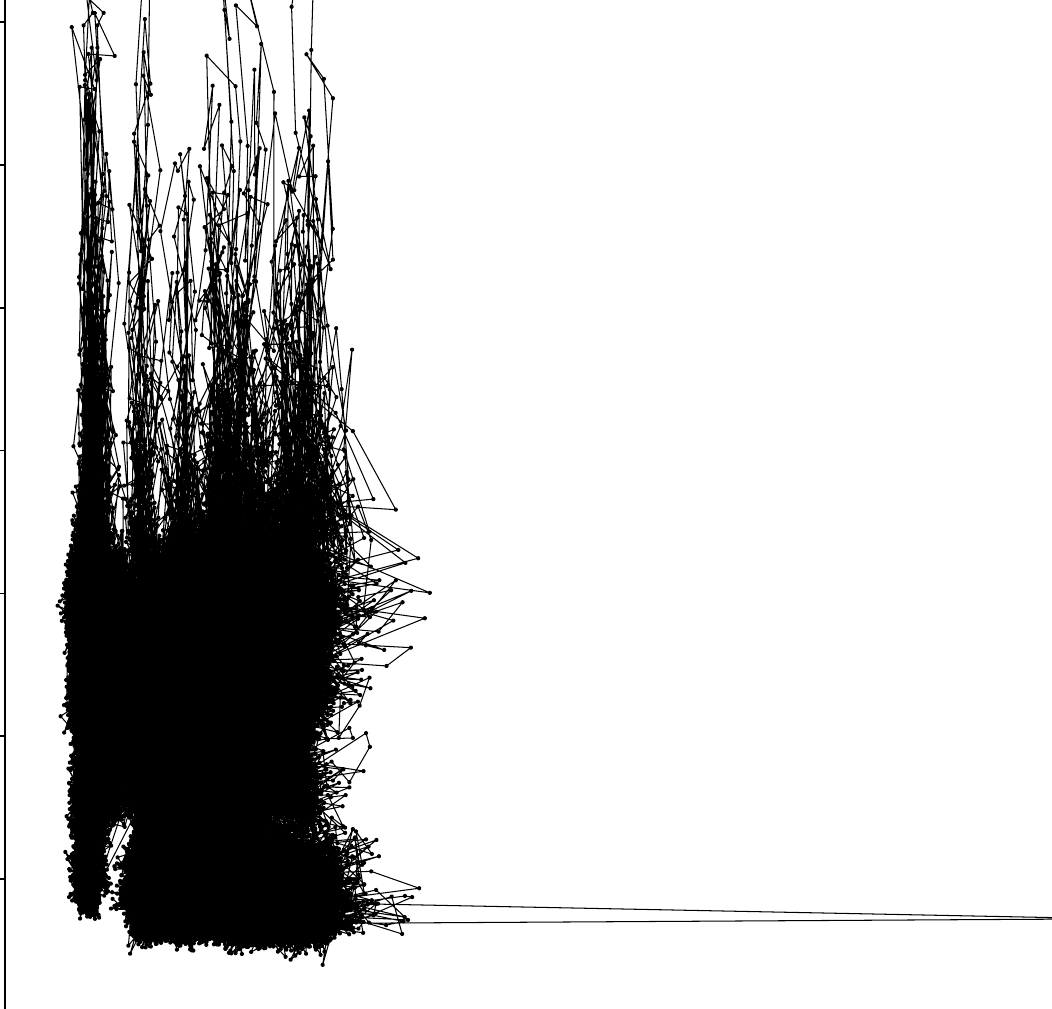}
\caption{A Two-Dimensional Projection of the state space $X$.}
\label{Fig:4b}
\end{figure}

\begin{figure}[h]
\centering
\includegraphics[width=0.6\textwidth]{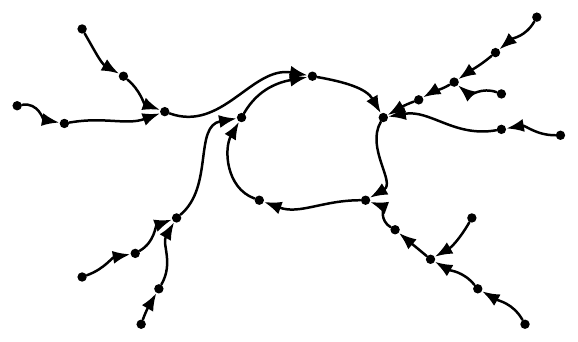}
\caption{A Cycle and its Basin}
\label{Fig3}
\end{figure}

\subsection{The attractor}\label{attr}
Let $f \colon X\to X$ be a discrete dynamical system. The \emph{orbit} of a point $x\in X$ is defined as
\[
\operatorname{Orb}(x) :=
\bigl\{f^n(x)\bigl|\bigr. n\geq 0 \bigr\}.
\]
The orbit of $x$ can be considered as the solution of the discrete differential equation with $x$ as initial point.

Next, a point $x\in X$ is a \emph{periodic point} of $f$ is there exists $p>0$ such that $f^p(x)=x$. The \emph{period} of $x$ is the smallest of such $p$. 
Similarly, $x\in X$ is called \emph{eventually periodic} if there exists $n\geq 0$ such that $f^n(x)$ is periodic. Denote the orbit of a periodic point $x$ by $P=\operatorname{Orb}(x)$. Such an orbit is called a \emph{cycle}. 
The \emph{basin} of the cycle $P$ is defined as 
\[
\operatorname{Basin}\{P\} :=
\bigl\{
  x\in X \bigl|\bigr.
  \text{there exists $n$ such that $f^n(x)\in P$}
\bigr\}.
\]
Observe that if $P$ and $Q$ are periodic orbits then either $P$ and $Q$ are disjoint or are equal. Moreover the restriction of the map $f$ to a cycle is a bijection. Finally, the set of all periodic points of $f$ is in particular the union of cycles. 
\begin{lemma}[Attractor Structure Lemma] 
The set $A$ of all periodic points of $f$ is invariant under $f$, i.e. $f(A)= A$. In particular, $f \colon A\to A$ is a bijection. Moreover, 
\[
X=\bigcup_{P\subset A}\operatorname{Basin}(P)
\quad\text{where $P=\operatorname{Orb}(x)$ for $x \in A$.}
\]
\end{lemma}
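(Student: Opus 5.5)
The proof rests on the finiteness of $X$, which has $K\times T$ points, together with the elementary facts stated just before the lemma: distinct cycles are disjoint, and $f$ restricted to a cycle is a bijection. The plan has three steps, taken in order.

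\emph{Step 1: $f(A)=A$.} For the inclusion $f(A)\subset A$, take $x\in A$ with $f^p(x)=x$. Then $f^p(f(x))=f(f^p(x))=f(x)$, so $f(x)$ is periodic. For the inclusion $A\subset f(A)$, take $x\in A$ of period $p$. The point $y=f^{p-1}(x)$ lies in $\operatorname{Orb}(x)\subset A$, by the first inclusion applied repeatedly, and $f(y)=f^p(x)=x$.

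\emph{Step 2: $f\colon A\to A$ is a bijection.} Surjectivity is Step 1. For injectivity, I will use that $A$ is finite, so a surjective self-map of $A$ is automatically injective. Alternatively, suppose $f(x)=f(y)$ with $x,y\in A$, and let $m$ be a common multiple of their periods. Then
\[
x=f^{m}(x)=f^{m-1}\bigl(f(x)\bigr)=f^{m-1}\bigl(f(y)\bigr)=f^m(y)=y.
\]

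\emph{Step 3: the basin decomposition.} I expect this to be the main step, and it is exactly where finiteness is essential. Given any $x\in X$, the orbit points $x,f(x),\dots,f^{|X|}(x)$ are $|X|+1$ points in a set of $|X|$ elements. By the pigeonhole principle there are $0\le i<j\le|X|$ with $f^i(x)=f^j(x)$. Setting $z=f^i(x)$, we get $f^{j-i}(z)=z$, so $z\in A$. With $P=\operatorname{Orb}(z)$, this shows $x\in\operatorname{Basin}(P)$. Hence $X\subset\bigcup_{P\subset A}\operatorname{Basin}(P)$. The reverse inclusion is trivial, since each basin is by definition a subset of $X$.

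If one also wants the union to be disjoint, which the statement does not require, the argument is as follows. Suppose $f^n(x)\in P$ and $f^m(x)\in Q$ with $n\le m$. Then $f^m(x)=f^{m-n}\bigl(f^n(x)\bigr)\in P$, because $P$ is invariant. So $P\cap Q\neq\emptyset$, and therefore $P=Q$ by the disjointness of distinct cycles.
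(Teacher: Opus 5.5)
Your proof is correct and follows the paper's approach. The paper gives no detailed argument, only the remark that the lemma holds because $X$ is finite, and your Step 1 invariance check, Step 2 injectivity argument and Step 3 pigeonhole argument spell out that remark in full.
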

The set $A$ is called the \emph{attractor} of the system. In particular, all essential aspects of the dynamics occur in the attractor $ A$. The Structure Lemma holds since $X$ has only finitely many points. In the general case already to determine the existence of an attractor can be a very difficult task. 

\begin{example}[Cell signalling dynamics]
The size of the attractor is $|A|=69\,846$ for system arising in \cref{sec:CellProcess}, which is less than $2\%$ of the size of the whole state space $|X|=431\,8000$. 
The attractor $A$ consists of $30$ cycles, see \cref{tab:period_orbits} and a
two dimensional projection of $A$ was obtained by projecting to the coordinates $113$ and $97$, see \cref{Fig10}.
\begin{figure}[h]
\centering
\includegraphics[width=0.6\textwidth]{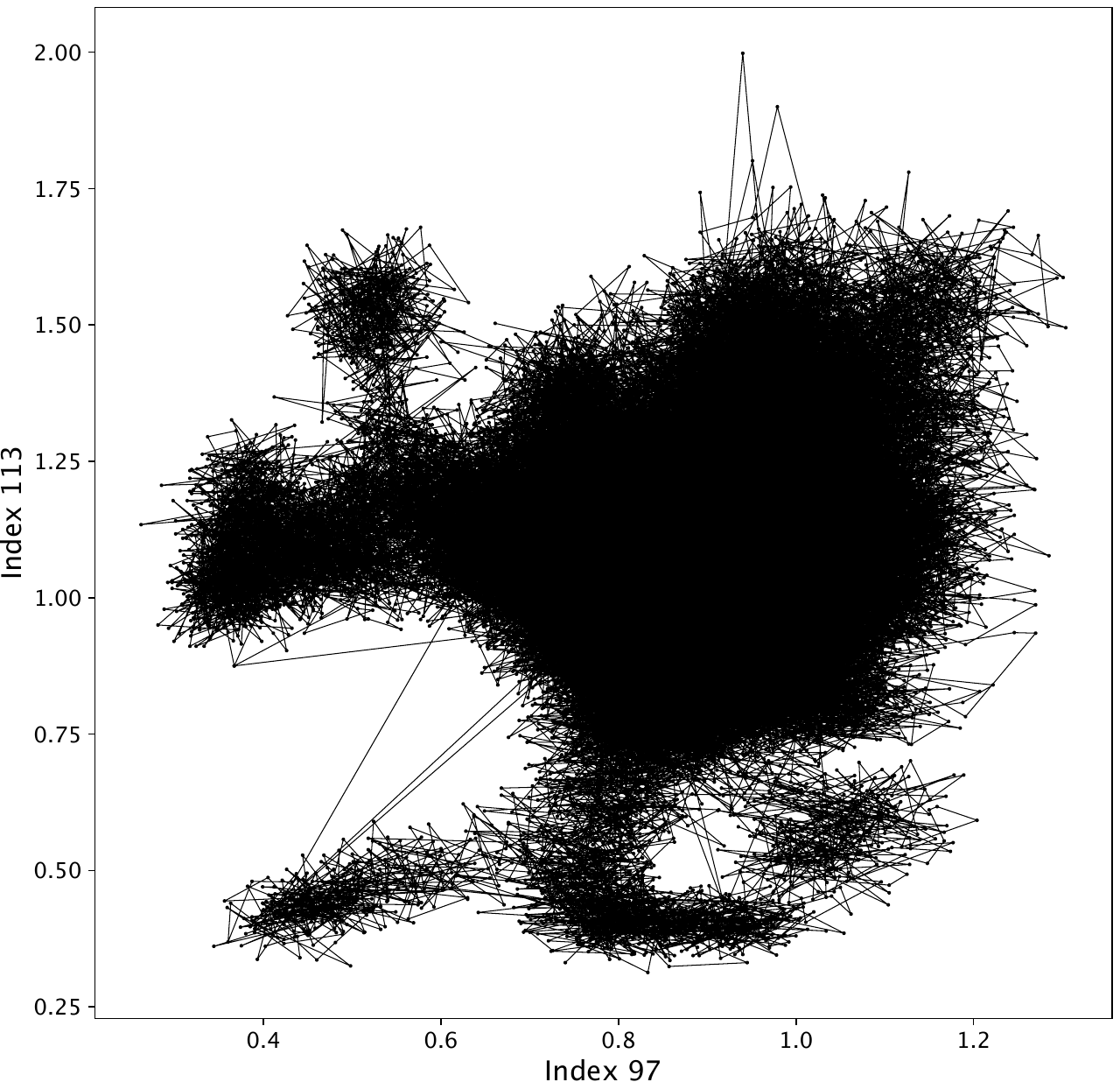}
\caption{A Two-Dimensional Projection of the Attractor}
\label{Fig10}
\end{figure}
The other $98\%$ of $X$ that is not the attractor is in the basin, so it is not relevant for the recurrence of the dynamics. 
This is the first result concerning the physical system in \cref{sec:BiologicalUseCase}.
\emph{In particular, this reduces the data set to a much smaller set, i.e. $2\%$ of the total.} 

To find a small attractor is not surprising, since this is in fact what one expects to see in systems governed by dissipative processes. Indeed, our model confirms the dissipative aspect of the cell dynamics and encourages us to develop it further into a predictive tool. Moreover, one should compare \cref{Fig:4b} with \cref{Fig10} to see that the attractor has clearly defined feature, unlike the whole set $X$. 
\begin{table}[]
\centering
\caption{Distribution of Cycle Periods}
\label{tab:period_orbits}
\begin{tabular}{cl}
\toprule
\textbf{\# Orbits} & \textbf{Period} \\
\midrule
2 & 2162, 2205, 2234 \\
\addlinespace
1 & 2166, 2172, 2177, 2178, 2185, 2187, 2188, 2193, 2194, \\
 & 2210, 2216, 2232, 2241, 2244, 2253, 2264, 2277, 2280, \\
 & 2303, 2351, 2416, 2512, 2829, 4376 \\
\bottomrule
\end{tabular}
\end{table}
\end{example}


\subsection{Renormalization}\label{reno}
The attractor $A$ has been discussed so far only in its topological properties in terms of cycles and their periods. We introduce now a tool which allows to study its geometrical properties as well, i.e., how close the cycles are to each other and how they accumulate at each other or at themselves. This tool is known as renormalization and describes the attractor from scale to scale. 

The attractor $A$ inherits the metric from $\Real^d$. A rectangular box is a set of the form 
\[
\prod_{i=1}^d[a_i,b_i].
\]
Let $U_0$ be the smallest rectangular box which contains $ A$. Inductively we define a nested sequence of \emph{renormalization domains}, which are rectangular boxes $U_0\supset U_1\supset \dots\supset U_m$ as follows. Assume that $U_k$ is a rectangular box and let $B_k=U_k\cap A$. Assume that $U_k$ is the smallest rectangular box containing $B_k$. The set $B_{k+1}$ is defined as
\[
B_{k+1}= B_k\setminus (\partial U_k\cap B_k)
\]
where $\partial U_k$ denotes the boundary of $U_k$. Finally let $U_{k+1}$ be the smallest rectangular box containing $B_{k+1}$. For each $U_k$ we define the corresponding renormalization $R^kf \colon U_k\to U_k$ as \[ R^kf(x) := f^s(x)
   \quad\text{where $s>0$ is minimal such that $f^s(x)\in U_k$.}
\]
In other words the $k$-th renormalization of $f$ is the first return map to $U_k$. The set $U_k$ are getting smaller and smaller and define the $k$-th scale of $f$. The $k$-th renormalization describes the dynamics at the scale $k$, see \cref{Fig4}.
\begin{figure}[h]
\centering
\includegraphics[width=0.6\textwidth]{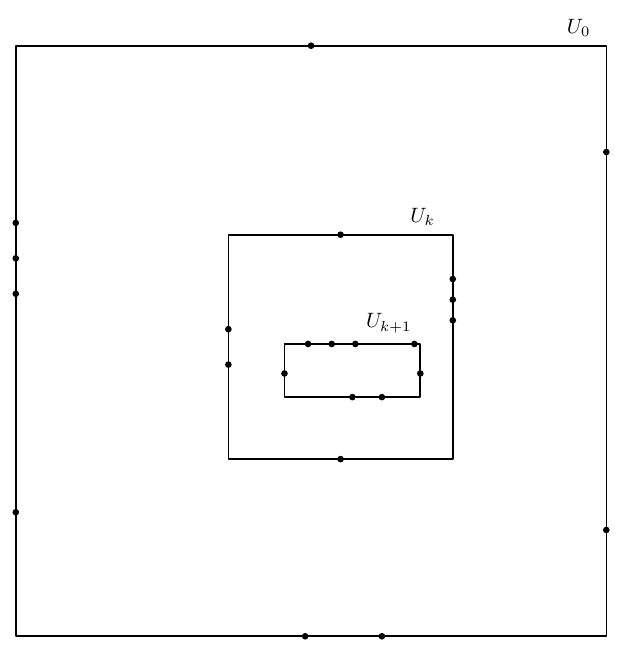}
\caption{Consecutive Renormalization Domains}
\label{Fig4}
\end{figure}

Now we describe the structure of the $k$-th renormalization. Observe that all points of $R^kf$ are periodic. We will describe the dynamics by the number of points with a given return time. In particular, we have the \emph{return time distribution} which is a mapping $\psi_k \colon \Natural\to\Natural$ defined as 
\[
\psi_k(r) := \#\bigl\{x\in U_k \bigl|\bigr. R^kf(x)=f^r(x)\bigr\}.
\]

\begin{example}[Cell signalling dynamics]\label{example:returntimedistributions}
$\psi_k(r)$ is either $0$ or $1$ for all $k$ and all $r$ for the system in \cref{sec:BiologicalUseCase}. This means that each point in $U_k$ returns with its own time. 
\begin{figure}
  \centering
  \includegraphics[width=0.9\linewidth]{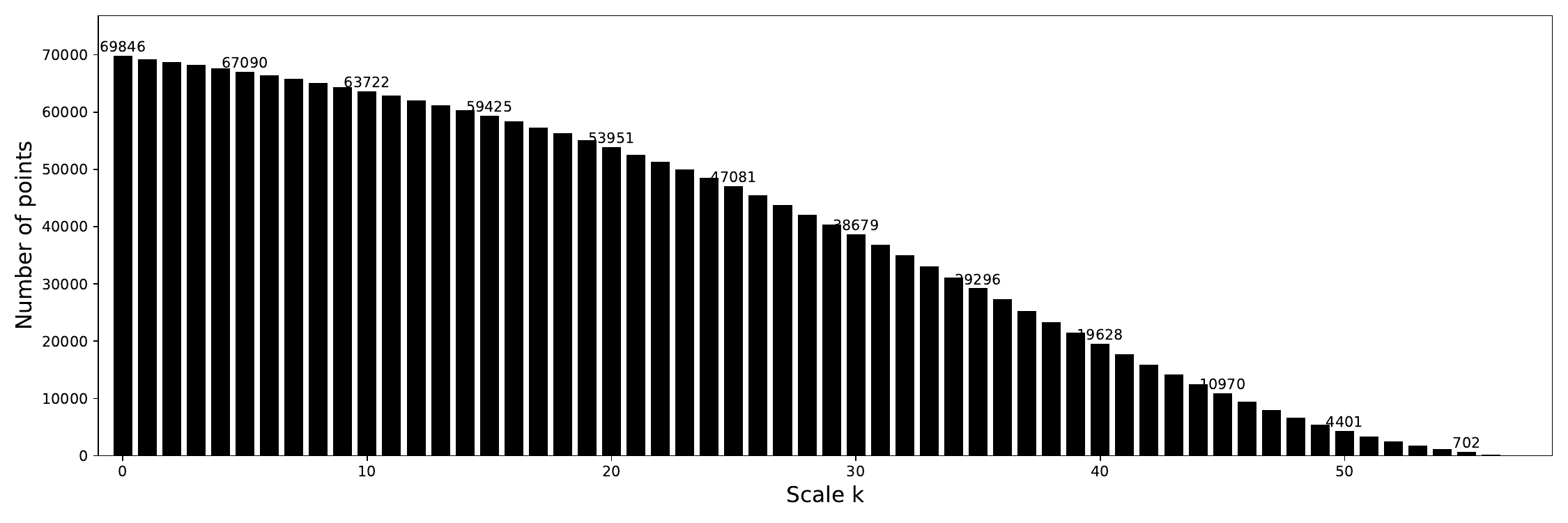}
  \caption{Number of points in each $U_k$ used in renormalization.}
  \label{fig:Bkpoints}
\end{figure}
\end{example}

\subsection{Dynamical partitions and renormalization sub-shifts}\label{DynPart}
Each renormalization domain $U_k$ determines a cover of open sets of the attractor. This cover is called the \emph{$k$-th dynamical partition} which corresponds to a geometrical scale of the dynamics. 
We start by describing each dynamical partition in a combinatorial way. 

Associated with each renormalization domain $U_k$ there is a directed graph $G_k$ that is defined as follows. A cycle of period $r$ can be represented as a directed graph consisting of $r$ vertices, and each vertex $v$ has a unique incoming and a unique outgoing edge. Such a directed graph is called a \emph{loop} of \emph{length} $r$. 

Given the return time distribution $\psi_k$ of the $k$-th renormalization, consider a collection of loops such that, for each $r$ with $\psi_n(r)\neq 0$ there are exactly $\psi_n(r)$ loops of length $r$. 
Choose one vertex in each loop and denote it by $u_k$. The directed graph $G_k$ is the union of those loops in which all vertices $u_k$ are identified. The vertex $u_k$ is called the \emph{root} of $G_k$. 
A graph with this structure is called a \emph{renormalization graph}. Then $G_k$ defines the corresponding \emph{sub-shift of finite type} as follows. 
A function $\gamma\colon \Integer\to G_k$ is called a path if $\gamma (t+1)$ is a successor of $\gamma(t)$. Let 
\[
\Sigma_k := \bigl\{\gamma \colon \Integer\to G_k \bigl|\bigr. \text{$\gamma$ is a path in $G_k$} \bigr\}
\]
and let $\sigma_k \colon \Sigma_k\to\Sigma_k$ be such $\sigma_k(\gamma)(t)=\gamma(t+1)$.
The pair $(\Sigma_k,\sigma_k)$ is the sub-shift of finite type associated to $G_k$. The topology on $\Sigma_k$ is defined as follows: A sequence $\gamma_n\in \Sigma_k$ converges to $\gamma\in \Sigma_k$ if for each finite set $Z\subset \Integer$ there exists $n_0\in \Natural$ such that the restrictions to the set $Z$ become eventually constant, namely  ${\gamma_{n}}_{|Z}=\gamma_{|Z}$ for all $n\ge n_0$.
\begin{figure}[h]
\centering
\includegraphics[width=0.6\textwidth]{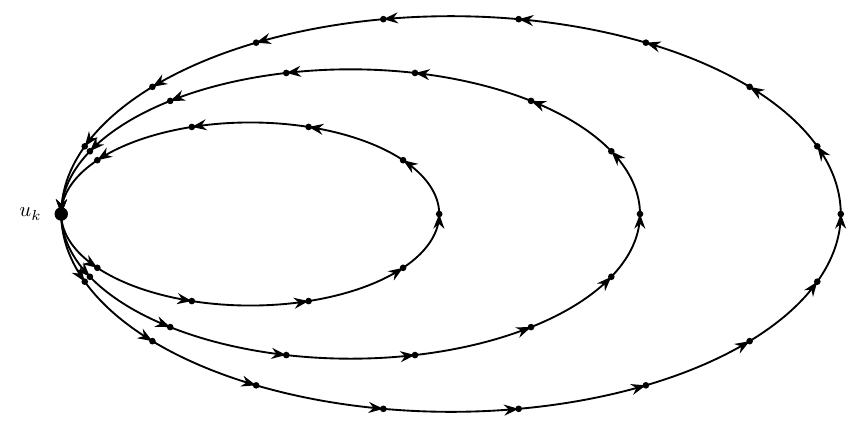}
\caption{The renormalization graph $G_k$}
\label{Fig5}
\end{figure}

Observe that every vertex in $G_k\setminus \left\{u_k\right\}$ corresponds to a single point in $ A$. The vertex $u_k$ corresponds to the set $U_k\subset A$, which in turn defines a projection $p_k \colon A\to G_k$ by
\begin{equation*}
p_k(x) := 
\begin{cases}
    x & \text{ for $x\notin U_k$,}
    \\
    u_k & \text{ for $x\in U_k$,}
\end{cases}
\end{equation*}
and the $k$-th dynamical partition 
\[
\mathcal D_k =
  \{U_k\} \cup
  \bigl\{\{v\}\bigl|\bigr. 
  v\in G_k \setminus\{u_k\}
  \bigr\}.
\]
Observe that $\mathcal D_{k+1}$ is a refinement of $\mathcal D_k$ and that the return time distribution $\psi_k$ completely determines the directed graphs $G_k$ and the corresponding sub-shifts of finite type. The directed graphs offer another point of view for the consecutive renormalizations. The dynamical partitions equip the attractor $A$ with an extra geometrical structure that will be explored in the next section.

\subsection{Revisiting the attractor: chain recurrence}\label{RevAttr}
Observe that our discrete dynamical system $f \colon A\to A$ cannot be treated as a regular map sending points to points. This is due to measurement errors and to the choice of the last successor, i.e. $\varphi_k(T)\mapsto f(\varphi_k(T))$. In particular, given a point $x$ in $A$, because of the errors we actually have a lot of choices, close to $f(x)$ but not necessarily equal. This tells us that the discrete map $f \colon A\to A$ might obscure the presence of other physically relevant orbits.

In this section, we address the above issue and incorporate those orbits that are left behind by the formal discrete map $f$ but that might be relevant to the physical process. As an example, consider the situation in \cref{Fig:chainrec} where a point $x$ in a cycle $\gamma$ is sent to $f(x)$ with uncertainty due to errors described by a neighbourhood $E(f(x))$ of $f(x)$. It could be that another cycle $\tilde\gamma$ crosses the error neighbourhood $E(f(x))$ in a point $y$. This gives rise to a choice for the physically relevant successor of $x$, namely $f(x)$ or $y$. This ambivalence occurs at each step and gives rise to many more physically relevant orbits than the only cycles in $A$. The precise description of this ambivalence and its consequences are presented in the following.

\begin{figure}[h]\label{Fig:chainrec}
\centering
\includegraphics[width=0.6\textwidth]{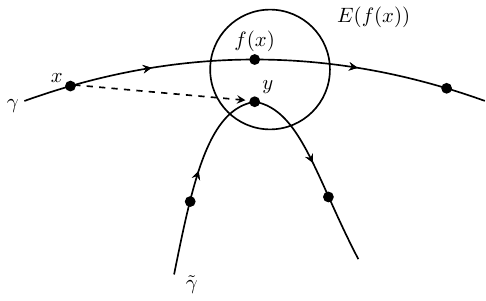}
\caption{Chain Recurrence}
\label{chainrecurrence}
\end{figure}

Each point $x\in A$ has a small neighbourhood $A\supset E(x)\ni x$ that represents the uncertainty of the point $x$.
Moreover, each point in $E(x)$ is mapped under $f$ to a point in the neighbourhood $E(f(x))$, but we have no precise information on this assignment. Hence, a more realistic presentation of the attractor is given by some map
\[
F \colon E(x) \mapsto E(f(x)).
\]
Since these error boxes might overlap, this might not give a well defined global map on 
\[ 
E=\bigcup_{x\in A} E(x).
\]
We only know that $F(E(x))\subset E(f(x))$, and we are going to describe the dynamics incorporating this so-called \emph{chain recurrence}. Consider the largest set $\mathcal{O}$ of possible orbits. An orbit $\gamma\in\mathcal{O}$ in this sense is a function $\gamma \colon \Natural \to A$ such that $ \gamma(t+1)\in E(f(\gamma(t)))$ holds.
The original map $f \colon A\to A$ induces the shift $\phi \colon \mathcal{O}\to \mathcal{O}$ by
$\phi(\gamma)(t)=\gamma(t+1)$.
Observe that the set $\mathcal{O}$ consists of all possible orbits incorporating all possible choices due to errors, as described above. The set of \emph{physical relevant orbits} is a subset
$\mathcal{O}_{\text{phys}}\subset \mathcal{O}$ and the next task is to describe $\mathcal{O}_{\text{phys}}$. Unfortunately, in practice, we do not know the error sets $E(x)$. However, the renormalization scheme tells us how to prescribe the ambivalence, namely, in each renormalization scale $k$, the ambivalence is restricted to $U_k$, i.e.
\[ x\mapsto f(x) \quad\text{for all $x\notin U_k$.}
\]
Define $E_k:=\bigl\{y=f(x)\bigl|\bigr. x\in U_k\bigr\}$ and let $\mathcal{O}_k\subset \mathcal{O}$ be the set of all functions $\gamma\colon \Natural\to A$ such that
\[
\gamma(t+1)=\begin{cases}
f\bigl(\gamma(t)\bigr) & \text{ if $\gamma(t)\notin U_k$,} 
\\
v & \text{ if $\gamma(t)\in U_k$ and $v\in E_k$.}
\end{cases}
\]
In particular, the orbit $\gamma$ has a successor freedom only when $\gamma(t)\in U_k$. 
Then projection $p_k \colon A\to G_k$ induces the bijection $\pi_k \colon \mathcal{O}_k\to\Sigma_k$
such that 
\[
\pi_k(\gamma)(t)=p_k\bigl(\gamma(t)\bigr)
\quad\text{whenever $\gamma\in O_k$.}
\]
Moreover, $\sigma_k\circ\pi_k=\pi_k\circ \phi$, so the map $\pi_k$ is called \emph{conjugation}. 

We are now ready to state the first Hyperbolicity Consequence~and Validation Criterion, see \cref{Sec:HypCons}.
 
\begin{description}
\item[Hyperbolicity Consequence~I.] For every hyperbolic system there exists a sequence of nested renormalization boxes $U_k$ with $0\leq k< m$ such that the system is equivalent to the corresponding sub-shift $\Sigma_k$ for large enough $k$. Moreover, given a finite selection of renormalization boxes $U_k$ with $0\leq k< m$, there is a non empty collection of cycles that intersect all renormalization boxes $U_k$ with $k<m$. 

\item[Validation Criterion~I.] 
  Each cycle $\gamma$ of $A$ intersects all renormalization boxes $U_k$ with $k<m$.     
\end{description}
If Validation Criterion~I holds, then the cycles in $A$ play the role of the cycles that were ensured to exist in a hyperbolic context; see Hyperbolicity Consequence~I.

\begin{remark}\label{Rem:first}
    Given a hyperbolic system with a chosen nested sequence of renormalization boxes $U_k$  with $k=0,\dots, k_{\text{max}}$, it is possible to select periodic orbits $\gamma_k$  such that 
\[ 
\gamma_k\cap U_k\neq\emptyset 
\quad\text{and}\quad
\gamma_k\cap U_{k+1}=\emptyset.
\]
Indeed, a single $\gamma_k$ does  not describe all the details of the system. A periodic orbit $\gamma$ is called \emph{deep} if $\gamma\cap U_{k_{\text{max}}}\neq\emptyset$. A single deep periodic orbit may describe the global dynamics in more detail. In particular, for all $k$, $\gamma_k$ is not deep. From this perspective, one may interpret the Validation Criterion~I as confirming that the observed cycles share a common qualitative property, i.e. they are all deep. In other words, the physical process only uses deep cycles. 
\end{remark}

Recall that the renormalization scheme says that, on each scale $k$, the ambivalence only occurs on the small set $U_k$.  The Validation Criterion~I implies that the attractor $A\subset \mathcal{O}_k$ and we may reasonably assume that $\mathcal{O}_\text{phys}\subset \mathcal{O}_k$ for some $k$, i.e., the physical orbits only sense ambivalence while being in $U_k$. 

\begin{example}[Cell signalling dynamics] 
The system in \cref{sec:BiologicalUseCase} satisfies the Validation Criterion~I. From \cref{fig:sgraph} one can observe that there are no outliers. 
\end{example}
\subsection{The physical measure}\label{sec:PhysMeasure} 
A physical measure for a dynamical system is a measure $\mu^*$ that captures the statistical distribution of the orbits. In particular, it assigns to each open set the frequency of a typical orbit visiting this set, namely 
\[ 
\mu^*(U) := \lim_{n\to\infty}\frac{1}{n}\#\bigl\{i<n \bigl|\bigr. f^i(x)\in U \bigr\}
\quad\text{for any open set $U$.}
\]
The essence of a physical measure is that it does not depend on the starting point $x$, which is a typical point in state space. 
Hence, from a statistical point of view, all orbits behave the same, which is referred to as the ergodicity of the system. Many physical systems are often ergodic and have a physical measure. 

Moreover, physical measures are used to make predictions of observables. Let $\varphi$ be an observable of a chaotic dynamical system having a physical measure $\mu^*$. Then the time series $i\mapsto\varphi(f^i(x))$ will be a chaotic sequence of numbers and we will not be able to predict the value at a specified moment in time $i$. However, we can make a prediction on its time average and consider 
\[
\Expect[\varphi]=\lim_{n\to\infty}\frac{1}{n}\sum_i\varphi(f^i(x)).
\]
In addition, we get from Birkhoff ergodic theorem that 
\[
\Expect[\varphi]=\int\varphi d\mu^*.
\]
Our goal is to construct the physical measure $\mu^*$ that will allow us to make meaningful predictions. 

\subsection{Cycle measures}
The task at hand is to characterize the physically relevant orbits, $\mathcal{O}_\text{phys} $. Indeed, the description of $\mathcal{O}_\text{phys} $ constitutes the model itself. This characterization is carried out by statistical means. We now introduce the necessary ingredients.

So far we built a sequence of sub-shifts of finite type $\left(\Sigma_k, \sigma_k\right)$ associated to the graphs $G_k$. As explained in the introduction, our model will be a Markov process, so we need to equip these purely topological objects 
with an invariant measure. Observe that every cycle carries a unique invariant measure which we will use to construct the physical measure of the model. 
\begin{figure}
  \centering
\includegraphics[width=0.9\linewidth]{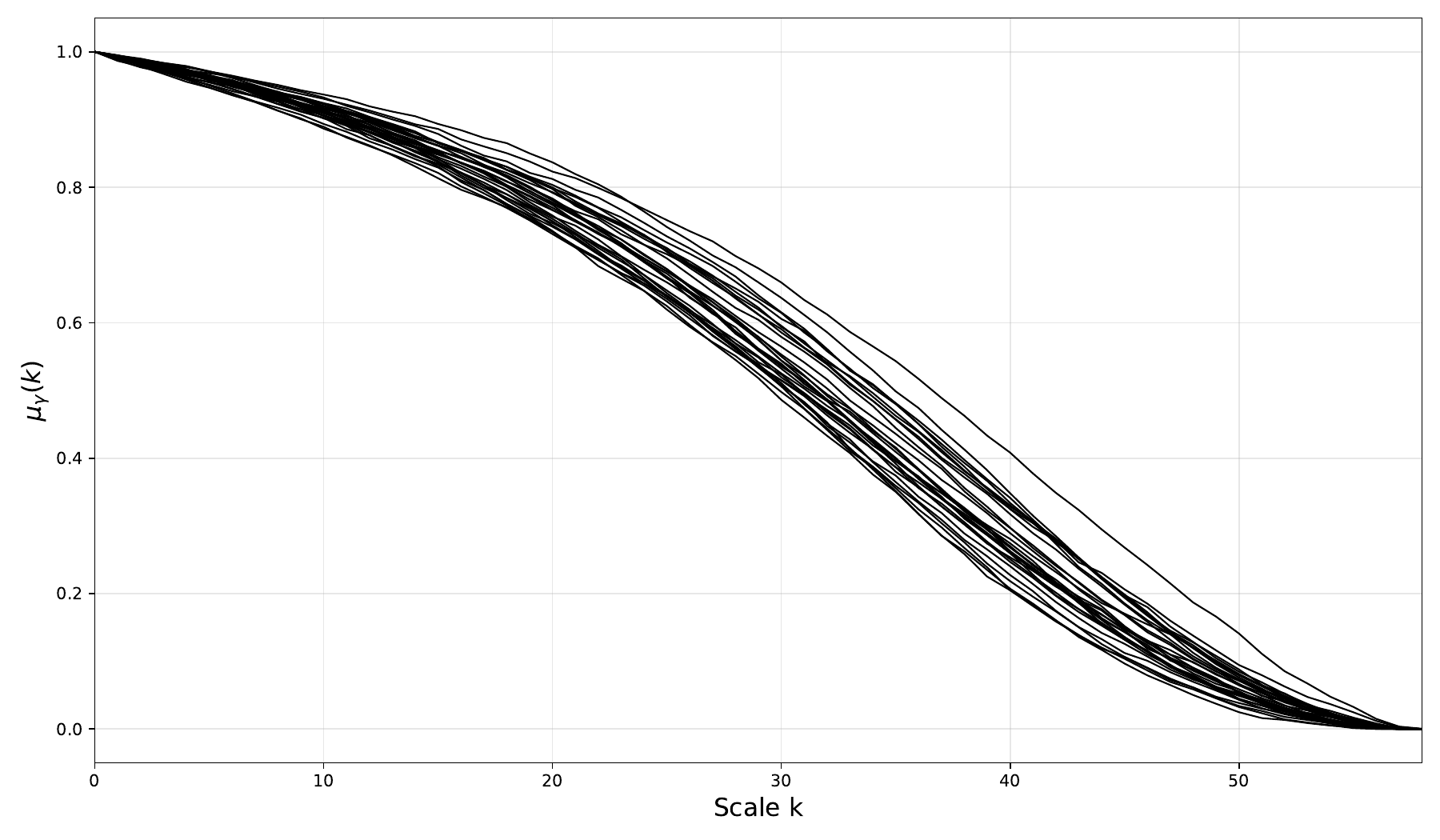}
  \caption{Validation of Criteria I and II}
  \label{fig:sgraph}
\end{figure}

The collection of all cycles in $A$ is denoted by $\Gamma$. For each cycle $\gamma\in\Gamma$, say of period $T$, consider the unique invariant probability measure on $\gamma$
\[
\mu_{\gamma} := \frac{1}{T}\sum_{a\in\gamma}\delta_a
\quad\text{where $\delta_a$ is the point mass in $a$.}
\]
This is called a \emph{cycle measure} of $\gamma$ and it will be used to construct the physical measure of our model:
\[ 
\mu\in\operatorname{Span}\bigl(\{\mu_{\gamma}\}_{\gamma\in\Gamma }\bigr).
\]

\begin{description}
\item[Hyperbolicity Consequence~II.]
A hyperbolic system with a physical measure $\mu^*$ has the property that for every $\delta>0$ and for any finite selection of renormalization boxes $U_k$ with $0\leq k\leq m-1$, there exists a non empty finite collection of cycles $\Gamma^*=\{\gamma\}$ such that 
\[
\bigl|\mu_{\gamma}(U_k)-\mu^*(U_k)\bigr| < \delta
\quad\text{holds for every $\gamma\in\Gamma^*$ and $0\leq k\leq m-1$.}
\]

\item[Validation Criterion~II.] 
Let $\delta_{\scriptscriptstyle \mathrm{II}}$ be the smallest number such that the following holds. For every pair of cycles $\gamma,\tilde\gamma\in\Gamma$ we have that
\begin{equation}\label{bundelingS}
\bigl|\mu_{\gamma}(U_k)-\mu_{\tilde\gamma}(U_k)\bigr|< 2\delta_{\scriptscriptstyle \mathrm{II}}
\quad\text{for every $0\leq k\leq m-1$.}
\end{equation}
\end{description}
If Validation Criterion~II holds, then the cycles in $A$ play the role of the cycles ensured to exist in a hyperbolic context; see Hyperbolicity Consequence~II.

\begin{remark}\label{Rem:second}
Similarly to \cref{Rem:first}, given a hyperbolic system with a chosen nested sequence of renormalization boxes $U_k$  with $k=0,\dots, k_{\text{max}}$, it is possible to select periodic orbits $\gamma_k$ that do not satisfy Validation Criterion~II. From this perspective, one may interpret Validation Criterion~II as confirming that the observed cycles arising from the physical process satisfy an even stronger quantitative criterion than merely being deep cycles.

Observe that Validation Criterion~I guarantees that the observed cycles satisfy a qualitative criterion, namely that they cross all renormalization boxes. Validation Criterion~II shows that they moreover share an extra common quantitative property: they pass through the renormalization boxes with approximately equal frequency. In particular, the observed cycles are not arbitrary cycles. 
They appear to exhibit a statistical organization.
\end{remark}
\begin{example}[Cell signalling dynamics]
The system in \cref{sec:BiologicalUseCase} satisfies Validation Criterion~II with $\delta_{\scriptscriptstyle \mathrm{II}}=0.1040$. 
The functions $k\mapsto\mu_{\gamma}(U_k)$, $\gamma\subset\Gamma$, are plotted in \cref{fig:sgraph}. The bundling one observes in \cref{fig:sgraph} is the first indication of the statistical organization of the cycles. 
\end{example}

\subsection{A Markov chain as model}
A hyperbolic dynamical system with physical measure $\mu^*$ can be represented by many different Markov chains. Indeed, each Markov partition induces a symbolic representation of the dynamics and hence a corresponding Markov chain. The dynamical partitions created by the renormalization process can be used as covers to construct a Markov chain model for the given hyperbolic system.

Motivated by this observation, we use the dynamical partitions $\mathcal{D}_k$ generated by the renormalization process (see \cref{DynPart}) to construct a Markov chain model for the physical process under investigation. At each scale $k$, the partition $\mathcal{D}_k$ defines a topological subshift of finite type $(\Sigma_k,\sigma_k)$. The key step is to enrich this symbolic system with probabilistic information. To this end, we use the cycles in $\Gamma$ together with their associated cycle measures to define a probability measure $\mu_k$ on $\Sigma_k$, thereby obtaining a probabilistic Markov chain
$
(\Sigma_k,\sigma_k,\mu_k)$.
The construction of $\mu_k$ and the resulting Markov chain model are described in detail below. 

Consider the following measure on the attractor $A$, 
\[ 
\quad
\mu=\frac{1}{\#\Gamma}\sum_{\gamma\in\Gamma}\mu_{\gamma},
\]
where $\#\Gamma$ is the number of cycles in $\Gamma$. 
In order to turn each $\Sigma_{k}$ into a Markov chain, we need to define an invariant measure $\mu_k$. In particular, given a finite path $\underline v=\left(v_1,v_2\dots,v_r\right)$ in the graph $G_k$ we have to describe the probability 
$
\mu_k\left[v_1,v_2\dots,v_r\right],
$
that the path $\underline v$ occurs. For a path $\underline v=(v)$ of length $1$, the measure $\mu$ describes this probability as
\[
\mu_k\bigl([v]\bigr)=\mu(v).
\]
For a general path $\underline v=(v_1,v_2\dots,v_r)$ we get,
\[
\mu_k\bigl([v_1,v_2\dots,v_r]\bigr)=\mu(v_1)\prod_{i=1}^{n-1}p_{v_i,v_{i+1}},
\]
where $p_{v,v'}$ denotes the \emph{transition probability} from vertex $v$ to vertex $v'$ and they are defined as follows.

If there is no edge in $G_k$ from $v$ to $v'$, then $p_{v,v'}=0$. If there is an edge and $v\neq u_k$ then $p_{v,v'}=1.$ Next, let $V_k\subset G_k\setminus{u_k}$ be the vertices such that $u_kv$ is an edge of $G_k$. For $v'\in V_k$, the probabilities $p_{u_k,v'}$ are defined such that
\[
\mu(u_k)p_{u_k,v'}=\mu(v').
\]
In particular, if $v'\in\gamma'$, $\gamma'\in\Gamma$ with period $T_{\gamma'}$ then
\[
p_{u_k,v'}=\frac{1}{T_{\gamma'}}\frac{1}{\sum_{\gamma\in\Gamma}\mu_{\gamma}(u_k)}
\quad\text{and}\quad
p_{u_k u_k}=1-\sum_{v\in V_k} p_{u_k v}.
\]

The definition of the Markov chain $\left(\Sigma_k,\mu_k, \sigma_k\right)$ for $k=0,1,\dots,m-1$ is complete:
Define the \emph{transition matrix} $P_k=\left(p_{vv'}\right)$ and the probability row vector $m_k=\left(\mu(v)\right)_{v\in G_k}$. 
This definition implies the following lemma, see \cite[Chapter~5]{Petersen} for more details on Markov chains.
\begin{lemma}\label{transitionmatrix}
Each row in the matrix $P_k$ is a probability vector, i.e., $\sum_{v'}p_{vv'}=1$ holds for every $v$.
Moreover, $m_k$ is the left eigenvector of $P_k$, i.e.
$m_k P_k=m_k$. In particular, the powers of the transition matrix $P^s_{k}$ converge exponentially to the matrix in which each row equals $m_k$,
\[
\lim_{s\to\infty} xP^s_k=\left({\sum x_i}\right)m_k,
\quad\text{for each vector $x=(x_i)$.}
\]
\end{lemma}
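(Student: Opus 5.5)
The plan is to verify the three assertions in order: row sums, stationarity of $m_k$, and convergence of $xP_k^s$. Throughout I use that $f\colon A\to A$ is a bijection (Attractor Structure Lemma). I also use that $\mu$ is $f$-invariant pointwise: $\mu(f(x))=\mu(x)$ for every $x\in A$, since each $\mu_\gamma$ is uniform on its cycle. Recall that $G_k$ has the root $u_k$, which stands for $U_k\cap A$, and one vertex for each point of $A\setminus U_k$. Every edge of $G_k$ comes from one step of $f$, with points of $U_k$ collapsed to $u_k$.

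\emph{Row sums.} A non-root vertex $v$ has exactly one outgoing edge, to $p_k(f(v))$, and its transition probability is $1$. For the root, $\sum_{v'}p_{u_kv'}=1$ holds by the definition of $p_{u_ku_k}$. The only content is that $p_{u_ku_k}\ge 0$, i.e.\ that $\sum_{v\in V_k}\mu(v)\le\mu(u_k)$. Each $v\in V_k$ is of the form $f(x)$ with $x\in U_k$, and distinct $v$ have distinct preimages because $f$ is injective on $A$. Invariance of $\mu$ then gives $\mu(V_k)=\mu\bigl(f^{-1}(V_k)\bigr)\le\mu(U_k)=\mu(u_k)$. This also shows that $p_{u_kv'}=\mu(v')/\mu(u_k)$ agrees with the explicit formula in terms of $T_{\gamma'}$.

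\emph{Stationarity.} First take a non-root $v'$. Its unique $f$-preimage $w=f^{-1}(v')$ lies either outside $U_k$ or inside $U_k$. If $w\notin U_k$, the only incoming edge of $v'$ is from $w$, with weight $1$, so $(m_kP_k)_{v'}=\mu(w)=\mu(v')$. If $w\in U_k$, then $v'\in V_k$ and the only incoming edge is from $u_k$, so $(m_kP_k)_{v'}=\mu(u_k)p_{u_kv'}=\mu(v')$. The root coordinate needs no separate computation. Since $P_k$ is stochastic, $\sum_{v'}(m_kP_k)_{v'}=\sum_v\mu(v)=1$, and all non-root coordinates already agree with $m_k$, so the root coordinate agrees as well.

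\emph{Convergence.} I would invoke the Perron--Frobenius theorem for primitive stochastic matrices, so I need $P_k$ irreducible and aperiodic. Irreducibility follows from the structure of $G_k$ as a union of loops through $u_k$: every vertex reaches $u_k$ and is reached from it. This is where one needs every cycle of $A$ to meet $U_k$ (Validation Criterion~I), since otherwise a cycle would form a closed class disjoint from the root. Aperiodicity is the step I expect to be the main obstacle, because it does not follow formally from the definitions. It holds if $p_{u_ku_k}>0$, which happens exactly when some $x\in U_k$ has $f(x)\in U_k$; the self-loop at $u_k$ then has length $1$. Otherwise it holds if the loop lengths $r$ with $\psi_k(r)\neq0$ have greatest common divisor $1$. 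I would first try to show $p_{u_ku_k}>0$ from the nesting and shrinking of the boxes. Failing that, I would state aperiodicity as a standing assumption; it is plausible in the example, where the return times are many distinct nearby integers.

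Granting primitivity, $1$ is a simple eigenvalue and all other eigenvalues have modulus at most some $\lambda<1$. Hence $P_k^s=\mathbf{1}m_k+O(\lambda^s)$, where $\mathbf 1$ is the column of ones and $m_k$ is the unique stationary vector found above. Multiplying on the left by $x$ gives $xP_k^s\to(x\mathbf 1)m_k=\bigl(\sum x_i\bigr)m_k$ exponentially fast.
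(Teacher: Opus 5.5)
Your proof is correct and follows the argument the paper intends; the paper gives no proof beyond saying the lemma follows from the definitions and citing Petersen for the convergence theorem for finite Markov chains, and your row-sum and stationarity checks supply the omitted details. Keep aperiodicity as an explicit hypothesis and drop the attempt to derive $p_{u_ku_k}>0$: if each cycle meets $U_k$ in exactly one point and has period at least $2$ (as at $k=m$ in Appendix~II), then $p_{u_ku_k}=0$ and $P_k$ has period $\gcd_\gamma T_\gamma$ (for a single cycle, $P_k$ is a cyclic permutation matrix whose powers do not converge), so the convergence claim genuinely requires $\gcd\{r:\psi_k(r)\neq 0\}=1$, together with Validation Criterion~I so that $m_k$ sums to $1$.
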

The \emph{entropy} of the Markov chain $(\Sigma_k,\mu_k, \sigma_k)$ is defined as 
\begin{equation}\label{eq:EntropyExpr}
h_k:=-\sum_{v,v'}\mu_k(v)p_{v,v'}\log p_{v,v'}.
\end{equation}
Generally speaking, the entropy measures the degree of chaos in the process. The higher the entropy, the more chaotic it is, see \cite{Petersen} for more information.
\begin{figure}
   \centering
   \includegraphics[width=0.9\linewidth]{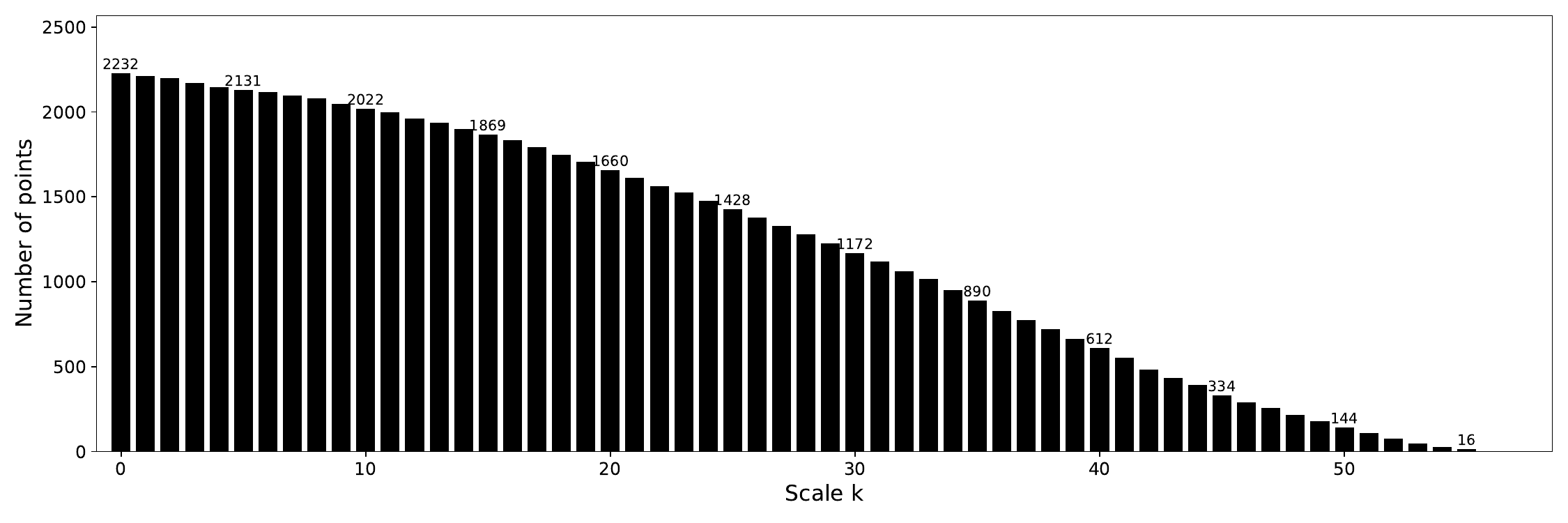}
   \caption{Values of $N_{\gamma}(k)$ over sets $U_k$ used in renormalization.}
   \label{fig:N_gammakoveruk}
\end{figure}
  
Next, let $\gamma\in\Gamma$ and recall that $\gamma \colon \Integer\to A$ is a periodic function with period $T_\gamma$. Then $N_\gamma(k)=E_\gamma(k)+F_\gamma(k)$ where 
\begin{align*}
 N_\gamma(k)&:= \#\bigl\{0\leq t\leq T_{\gamma}\bigl|\bigr.\gamma(t)\in U_k\bigr\},\\
 E_\gamma(k)&:= \#\bigl\{0\leq t\leq T_{\gamma}\bigl|\bigr.\gamma(t)\in U_k\text{ and }\gamma(t+1)\notin U_k\bigr\},\\
 F_\gamma(k)&:= \#\bigl\{0\leq t\leq T_{\gamma}\bigl|\bigr.\gamma(t),\gamma(t+1)\in U_k\bigr\}.
\end{align*}
The following lemma follows directly from the definition of the terms arising in the claim.
\begin{lemma}\label{lem:TransProb}
The transition probabilities and the physical measure for the Markov chain $(\Sigma_k,\mu_k, \sigma_k)$ can be expressed as
\[
   p_{u_k,u_k }=\frac{\mu_k(u_k)-\sum_{v\in V_k}\mu_k(v)}{\mu_k(u_k)}
 \quad\text{and}\quad
   \mu_k(u_k)-\sum_{v\in V_k}\mu_k(v)=\frac{1}{\#\Gamma}\sum_{\gamma\in\Gamma}\frac{F_{\gamma}(k)}{T_{\gamma}}.
\]
\end{lemma}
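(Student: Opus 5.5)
The first identity follows from the definition of the transition probabilities at the root together with \cref{transitionmatrix}. By construction $\mu(u_k)p_{u_k,v'}=\mu(v')$ for every $v'\in V_k$. Hence
\[
p_{u_k,u_k}=1-\sum_{v\in V_k}p_{u_k,v}=1-\frac{\sum_{v\in V_k}\mu(v)}{\mu(u_k)}=\frac{\mu_k(u_k)-\sum_{v\in V_k}\mu_k(v)}{\mu_k(u_k)},
\]
using $\mu_k([v])=\mu(v)$. This is purely algebraic and needs only $\mu(u_k)>0$. That positivity is guaranteed as soon as some cycle meets $U_k$, for instance under Validation Criterion~I.

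For the second identity I will compute both sides cycle by cycle. The map $\mu\mapsto$ (both sides) is linear in the cycle measures, and $\mu=\frac{1}{\#\Gamma}\sum_\gamma\mu_\gamma$. It therefore suffices to show, for each $\gamma\in\Gamma$,
\[
\mu_\gamma(U_k)-\sum_{v\in V_k}\mu_\gamma(v)=\frac{F_\gamma(k)}{T_\gamma}.
\]
Here $\mu_\gamma(u_k)=\mu_\gamma(U_k)=N_\gamma(k)/T_\gamma$, counting one time index per period. Since $N_\gamma(k)=E_\gamma(k)+F_\gamma(k)$, the claim reduces to showing $\sum_{v\in V_k}\mu_\gamma(v)=E_\gamma(k)/T_\gamma$.

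To prove this reduced claim, I identify $V_k$ with the set of points $f(x)\notin U_k$ with $x\in U_k$. These are the first vertices on the loops of $G_k$ leaving the root. Since $f$ is a bijection on $A$ (Attractor Structure Lemma), each such $v$ has a unique preimage $x=f^{-1}(v)\in U_k$. Moreover, $v$ lies on exactly one cycle, where it carries mass $1/T_\gamma$. Consequently $\#(V_k\cap\gamma)$ equals the number of times $t$ in one period with $\gamma(t)\in U_k$ and $\gamma(t+1)\notin U_k$, which is $E_\gamma(k)$. Thus $\sum_{v\in V_k}\mu_\gamma(v)=E_\gamma(k)/T_\gamma$. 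Averaging over $\Gamma$ gives the formula.

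The main obstacle is bookkeeping rather than mathematics. One issue is the index range $0\le t\le T_\gamma$ in the definitions, which double counts one time; I will read it as $0\le t<T_\gamma$. The other is the identification of the vertex set $V_k$ with the exit points of $U_k$. In particular, returns of length $1$ (i.e.\ $f(x)\in U_k$) must correspond to the self-loop term $p_{u_k,u_k}$ and not to elements of $V_k$. I expect this to require stating explicitly that, in the graph $G_k$, loops of length $1$ are encoded by $p_{u_k,u_k}$, which is exactly what the computation of $F_\gamma(k)$ captures.
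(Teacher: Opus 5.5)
Your proof is correct and follows the paper's route. The paper only says that the lemma follows directly from the definitions, and your argument is exactly that unpacking: the first identity comes from $p_{u_k,u_k}=1-\sum_{v\in V_k}p_{u_k,v}$ together with $\mu(u_k)p_{u_k,v}=\mu(v)$. The second identity is checked cycle by cycle, using that $f$ is a bijection on $A$, so $V_k\cap\gamma$ corresponds one-to-one with the exit times counted by $E_\gamma(k)$, and then $N_\gamma=E_\gamma+F_\gamma$. Reading the index range as one period, $0\le t<T_\gamma$, is the right choice, and it is needed for $\mu_\gamma(U_k)=N_\gamma(k)/T_\gamma$ to hold.
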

\begin{proposition} The entropy profile $k\mapsto h_k$ in \cref{eq:EntropyExpr} satisfies the following:
\[
h_k = \frac{1}{\#\Gamma} \sum_{\gamma\in \Gamma} 
\frac{N_\gamma(k)}{T_{\gamma}}\log \biggl(T_{\gamma} \cdot \sum_{\gamma'\in \Gamma} \frac{N_{\gamma'}(k)}{T_{\gamma'}}\biggr)
-\frac{1}{\#\Gamma} \sum_{\gamma\in \Gamma} 
\frac{F_\gamma(k)}{T_{\gamma}}\log \biggl(T_{\gamma} \cdot \sum_{\gamma'\in \Gamma} \frac{F_{\gamma'}(k)}{T_{\gamma'}}\biggr).
\]
\end{proposition}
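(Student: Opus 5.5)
The plan is to compute the entropy \cref{eq:EntropyExpr} row by row in the transition matrix $P_k$, and then rewrite everything in terms of $N_\gamma(k)$, $F_\gamma(k)$ and $E_\gamma(k)=N_\gamma(k)-F_\gamma(k)$. The first observation removes almost all rows. For every vertex $v\neq u_k$ of $G_k$ there is exactly one outgoing edge, and it has $p_{v,v'}=1$, so $\log p_{v,v'}=0$. All other entries in that row vanish, so they contribute nothing (with the convention $0\log 0=0$). Hence
\[
h_k=-\mu(u_k)\Bigl(p_{u_k,u_k}\log p_{u_k,u_k}+\sum_{v'\in V_k}p_{u_k,v'}\log p_{u_k,v'}\Bigr).
\]
Write $S_N:=\sum_{\gamma'\in\Gamma}N_{\gamma'}(k)/T_{\gamma'}$ and $S_F:=\sum_{\gamma'\in\Gamma}F_{\gamma'}(k)/T_{\gamma'}$. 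By the definition of $\mu$ as the average of the cycle measures, $\mu(u_k)=\mu(U_k)=S_N/\#\Gamma$.

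Next I treat the exit edges. A vertex $v'\in V_k$ lies on a unique cycle $\gamma'$, and $\mu(v')=1/(\#\Gamma\,T_{\gamma'})$. The defining relation $\mu(u_k)p_{u_k,v'}=\mu(v')$ then gives $p_{u_k,v'}=1/(T_{\gamma'}S_N)$, consistent with the formula in the text. The key combinatorial step is that the vertices of $V_k$ lying on $\gamma$ are exactly the points $\gamma(t+1)$ with $\gamma(t)\in U_k$ and $\gamma(t+1)\notin U_k$. So their number is $E_\gamma(k)$, counting one period with $t$ taken modulo $T_\gamma$. This gives
\[
-\mu(u_k)\sum_{v'\in V_k}p_{u_k,v'}\log p_{u_k,v'}=\frac{1}{\#\Gamma}\sum_{\gamma}\frac{E_\gamma(k)}{T_\gamma}\log\bigl(T_\gamma S_N\bigr).
\]
For the self-loop, \cref{lem:TransProb} gives $\mu(u_k)p_{u_k,u_k}=S_F/\#\Gamma$ and $p_{u_k,u_k}=S_F/S_N$. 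Its contribution is therefore $-\frac{1}{\#\Gamma}S_F\log(S_F/S_N)$.

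Finally I substitute $E_\gamma=N_\gamma-F_\gamma$ and expand the logarithms. The sum $\sum_\gamma (F_\gamma/T_\gamma)\log S_N=S_F\log S_N$ cancels against the $+S_F\log S_N$ coming from the self-loop term. What remains is
\[
\frac{1}{\#\Gamma}\Bigl[\sum_\gamma\frac{N_\gamma}{T_\gamma}\log(T_\gamma S_N)-\sum_\gamma\frac{F_\gamma}{T_\gamma}\log T_\gamma-S_F\log S_F\Bigr].
\]
Since $\sum_\gamma (F_\gamma/T_\gamma)\log(T_\gamma S_F)=\sum_\gamma (F_\gamma/T_\gamma)\log T_\gamma+S_F\log S_F$, this is exactly the claimed expression.

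The main obstacle is the bookkeeping in the step that identifies $|V_k\cap\gamma|$ with $E_\gamma(k)$. This needs the following checks:
\begin{itemize}
\item Distinct exits give distinct vertices of $G_k$, which holds because $f$ is a bijection on $A$.
\item The index range $0\le t\le T_\gamma$ in the definitions is read cyclically, i.e.\ over one period, so that nothing is double counted.
\item The degenerate cases $S_F=0$ or $E_\gamma=0$ are handled by the convention $0\log 0=0$.
\end{itemize}
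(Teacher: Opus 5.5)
Your proposal is correct and follows essentially the same route as the paper: you keep only the root row of $P_k$, use $\mu(u_k)p_{u_k,v'}=\mu(v')$ together with \cref{lem:TransProb}, count the vertices of $V_k$ on each cycle by $E_\gamma(k)$, and finish with $N_\gamma(k)=E_\gamma(k)+F_\gamma(k)$. The difference is cosmetic. The paper first rewrites $h_k$ in $\mu\log\mu$ form and cancels the resulting $\log\#\Gamma$ terms, whereas you substitute $p_{u_k,v'}=1/(T_{\gamma'}S_N)$ directly; your explicit justification of $|V_k\cap\gamma|=E_\gamma(k)$ and of the cyclic reading of $0\le t\le T_\gamma$ also fills in points the paper leaves implicit.
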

\begin{proof} For each $v\in V_k$, $\gamma_v\in \Gamma$ is the cycle which contains $v$. 
Then,
{\begingroup
\allowdisplaybreaks
\begin{align*}
 h_k &:= 
 -\sum_{v,v'\in G_k}\mu_k(v)p_{v,v'}\log p_{v,v'} \\ 
 &= -\sum_{v\in V_k} \mu_k(u_k)p_{u_k v} \log p_{u_k v}-\mu_k(u_k)p_{u_k u_k} \log p_{u_k u_k}
 \\
 &= -\sum_{v\in V_k} \mu_k(v)\log \frac{\mu_k(v)}{\mu_k(u_k)}-\biggl( \mu_k(u_k)-\sum_{v\in V_k}\mu_k(v)\biggr)\log\biggl(\frac{\mu_k(u_k)-\sum_{v\in V_k}\mu_k(v)}{\mu_k(u_k)}\biggr)
 \\
 &= \mu_k(u_k)\log \mu_k(u_k)-\sum_{v\in V_k} \mu_k(v)\log \mu_k(v)
 \\
 &\qquad\qquad -\biggl(\mu_k(u_k)-\sum_{v\in V_k}\mu_k(v)\biggr)
 \log\biggl(\mu_k(u_k)-\sum_{v\in V_k}\mu_k(v)\biggr)
 \\
 &= \frac{1}{\#\Gamma}\sum_{\gamma\in\Gamma}\frac{N_{\gamma}(k)}{T_{\gamma}}\log\biggl(\frac{1}{\#\Gamma}\sum_{\gamma'\in\Gamma}\frac{N_{\gamma'}(k)}{T_{\gamma'}}\biggr)-\sum_{v\in V_k}\frac{1}{\#\Gamma T_{\gamma_v}}\log\biggl(\frac{1}{\#\Gamma T_{\gamma_v}}\biggr)
 \\
 &\qquad\qquad -\frac{1}{\#\Gamma}\sum_{\gamma\in\Gamma}\frac{F_{\gamma}(k)}{T_{\gamma}}\log\biggl(\frac{1}{\#\Gamma}\sum_{\gamma'\in\Gamma}\frac{F_{\gamma'}(k)}{T_{\gamma'}}\biggr)
 \\
 &= \frac{1}{\#\Gamma}\sum_{\gamma\in\Gamma}\frac{N_{\gamma}(k)}{T_{\gamma}}\log\biggl(\frac{1}{\#\Gamma}\sum_{\gamma'\in\Gamma}\frac{N_{\gamma'}(k)}{T_{\gamma'}}\biggr)-\frac{1}{\#\Gamma}\sum_{\gamma\in\Gamma}\frac{E_{\gamma}(k)}{ T_{\gamma}}\log\biggl(\frac{1}{\#\Gamma T_{\gamma}}\biggr)
 \\
 &\qquad\qquad -\frac{1}{\#\Gamma}\sum_{\gamma\in\Gamma}\frac{F_{\gamma}(k)}{T_{\gamma}}\log\biggl(\frac{1}{\#\Gamma}\sum_{\gamma'\in\Gamma}\frac{F_{\gamma'}(k)}{T_{\gamma'}}\biggr)
 \\
 &= \frac{1}{\#\Gamma}\sum_{\gamma\in\Gamma}\frac{N_{\gamma}(k)}{T_{\gamma}}\log\Bigl(\sum_{\gamma'\in\Gamma}\frac{N_{\gamma'}(k)}{T_{\gamma'}}\Bigr)-\frac{1}{\#\Gamma}\sum_{\gamma\in\Gamma}\frac{E_{\gamma}(k)}{ T_{\gamma}}\log\biggl(\frac{1}{ T_{\gamma}}\biggr)
 \\
 &\qquad\qquad -\frac{1}{\#\Gamma}\sum_{\gamma\in\Gamma}\frac{F_{\gamma}(k)}{T_{\gamma}}\log\biggl(\sum_{\gamma'\in\Gamma}\frac{F_{\gamma'}(k)}{T_{\gamma'}}\biggr)
 \\
 &= \frac{1}{\#\Gamma}\sum_{\gamma\in\Gamma}\frac{N_{\gamma}(k)}{T_{\gamma}}\log\biggl(T_{\gamma} \cdot\sum_{\gamma'\in\Gamma}\frac{N_{\gamma'}(k)}{T_{\gamma'}}\biggr)
 -\frac{1}{\#\Gamma}\sum_{\gamma\in\Gamma}\frac{F_{\gamma}(k)}{T_{\gamma}}\log\biggl(T_{\gamma}\cdot\sum_{\gamma'\in\Gamma}\frac{F_{\gamma'}(k)}{T_{\gamma'}}\biggr).
\end{align*}
\endgroup}
In the above, we used \cref{lem:TransProb} and the identity $N_{\gamma}(k)=F_{\gamma}(k)+E_{\gamma}(k)$.
\end{proof}

\begin{description}
\item[Hyperbolicity Consequence~III.] 
Given a hyperbolic system with physical measure $\mu^*$ which has entropy $h^*$. Then for every $\delta>0$ there exists a non empty finite collection of cycles $\Gamma^*=\{\gamma\}$ such that for every $\gamma\in\Gamma^*$, the corresponding Markov chains $\bigl(\Sigma_k(\gamma),\mu_k(\gamma), \sigma_k(\gamma)\bigr)$, $k=0,\dots,m$, have entropy function $h_{\gamma}(k):=h_k(\gamma)$ satisfying the following:
\begin{itemize}
\item[-] $h_{\gamma}(0)=0$ and $h_{\gamma}(m)=0$,
\item[-] there exists an interval $S_{\gamma} := \bigl[k(0),k(1)\bigr]$ such that $h_{\gamma}$ is increasing for all $k\leq k(0)$ and it is decreasing for all $k\geq k(1)$,
\item[-] $\bigl|h_{\gamma}(k)-h^*\bigr|<\delta$, for all $k\in S_{\gamma}$,
\item[-] $\bigcap_{\gamma\in\Gamma^*}S_{\gamma}\neq\emptyset$.
\end{itemize} 

\item[Validation Criterion~III.] Let $\delta_{\scriptscriptstyle \mathrm{III}}$ be the smallest number such that the following holds.
The following holds for every $\gamma\in\Gamma $, the collection of cycles in $A$:
\begin{enumerate}
\item $h_{\gamma}(0)=0$ and $h_{\gamma}(m)=0$, and 
\item there exists an interval $S_{\gamma}=\left[k(0),k(1)\right]$, called the plateau of $\gamma$, that is defined as
\[ S_{\gamma} :=
    \bigl\{k\bigl|\bigr. h_{\gamma}(k)>\max h_{\gamma}-\delta_{\scriptscriptstyle \mathrm{III}}\bigr\},
\]
such that $h_{\gamma}$ is increasing for all $k\leq k(0)$ and it is decreasing for all $k\geq k(1)$.
\end{enumerate} 
Moreover, $S=\bigcap_{\gamma\in\Gamma}S_{\gamma}\neq\emptyset,$ and  
\begin{equation}\label{bundelingofentropy}
     \bigl|h_{\gamma}(k)-h_{\tilde\gamma}(k)\bigr| < \delta_{\scriptscriptstyle \mathrm{III}}
     \quad\text{for every $\gamma,\tilde\gamma\in\Gamma$ and $k\in S$.}
\end{equation}
\end{description}

\begin{remark}\label{Rem:fourth}
   Similarly to \cref{Rem:first,Rem:second} given a hyperbolic system with physical measure $\mu$, it is possible to select an arbitrary large number of periodic orbits $\gamma_k$ that do not satisfy Validation Criterion~III. From this perspective, one may interpret Validation Criterion~III as confirming that the observed cycles arising from the physical process satisfy an even stronger statistical criterion than the ones satisfying only Validation Criterion~I and II. In particular, the observed cycles in $\Gamma$ are not arbitrary cycles and they share an extra common property. Indeed, they appear to have even more statistical structure, namely, at the scales $k\in S$, they have essentially the same entropy. 
\end{remark}

If the cycles in $A$ satisfy Validation Criteria I, II, and III, with sufficiently small\footnote{The requirements of the application determine what should be regarded as sufficiently small.} $\delta_{\scriptscriptstyle \mathrm{II}}$ and $\delta_{\scriptscriptstyle \mathrm{III}}$, then they share the statistical properties expected of physically relevant cycles. Consequently, it is reasonable to take the corresponding measure $\mu$ as the measure describing the model. 

\par\medskip
\fbox{\parbox{14cm}{%
Choose any $k^*\in S$, then the Markov chain $\left(\Sigma_{k^*},\mu_{k^*}, \sigma_{k^*}\right)$ based on $\Gamma$ is a model for the physical process. The model has entropy $h_{k^*}$. The value ${k^*}$ is called the \emph{scale} of the model.}}
\par\medskip
In practice, one would choose $k^*$ to be in the middle of $S$. 
\begin{example}[Cell signalling dynamics]
All cycles in $A$ for the system in \cref{sec:BiologicalUseCase} satisfy conditions~1 and 2 of Validation Criterion~III. The corresponding entropy bundling condition of \cref{bundelingofentropy} is illustrated in \cref{fig:entropies_eachcycle}. The aggregate entropy profile when considering all cycles in $\Gamma$ is in \cref{fig:entropiesallgoodcycles_a}.
The plateau considering all cycles in $\Gamma$, is $S=[25,36]$ for $\delta_{\scriptscriptstyle \mathrm{III}}=0.4234$. 



\begin{table}[]
\centering
\caption{Aggregate entropy $h(k)$ over the plateau $S=[25,36]$ for all 30 cycles, $X_{0-6,1000}$.}
\label{tab:entropy_aggregate_gr0_r1000}
\begin{tabular}{rr | rr | rr}
$k$ & $h(k)$ & $k$ & $h(k)$ & $k$ & $h(k)$ \\
\hline
25 & 2.1558 & 29 & 2.3361 & 33 & 2.3539 \\
26 & 2.2069 & 30 & 2.3577 & 34 & 2.3200 \\
27 & 2.2570 & 31 & 2.3609 & 35 & 2.2754 \\
28 & 2.2980 & 32 & 2.3635 & 36 & 2.2145 \\
\end{tabular}
\end{table}

\end{example}


\begin{figure}
  \centering
  \begin{subfigure}[t]{0.48\linewidth}
    \centering
    \includegraphics[width=\linewidth]{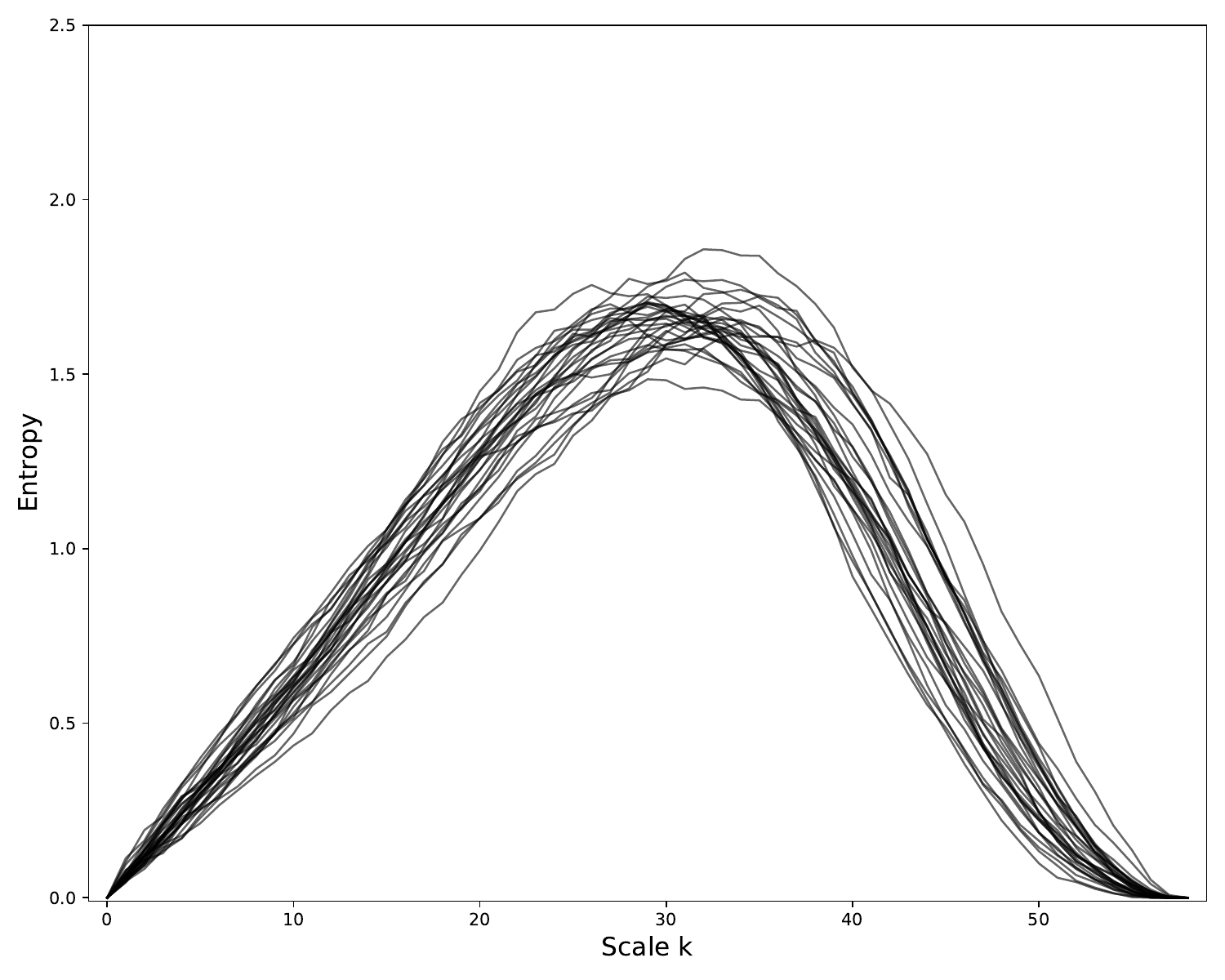}
    \caption{Individual entropy profile for each cycle.}
    \label{fig:entropies_eachcycle}
  \end{subfigure}
  \hfill
  \begin{subfigure}[t]{0.48\linewidth}
    \centering
    \includegraphics[width=\linewidth]{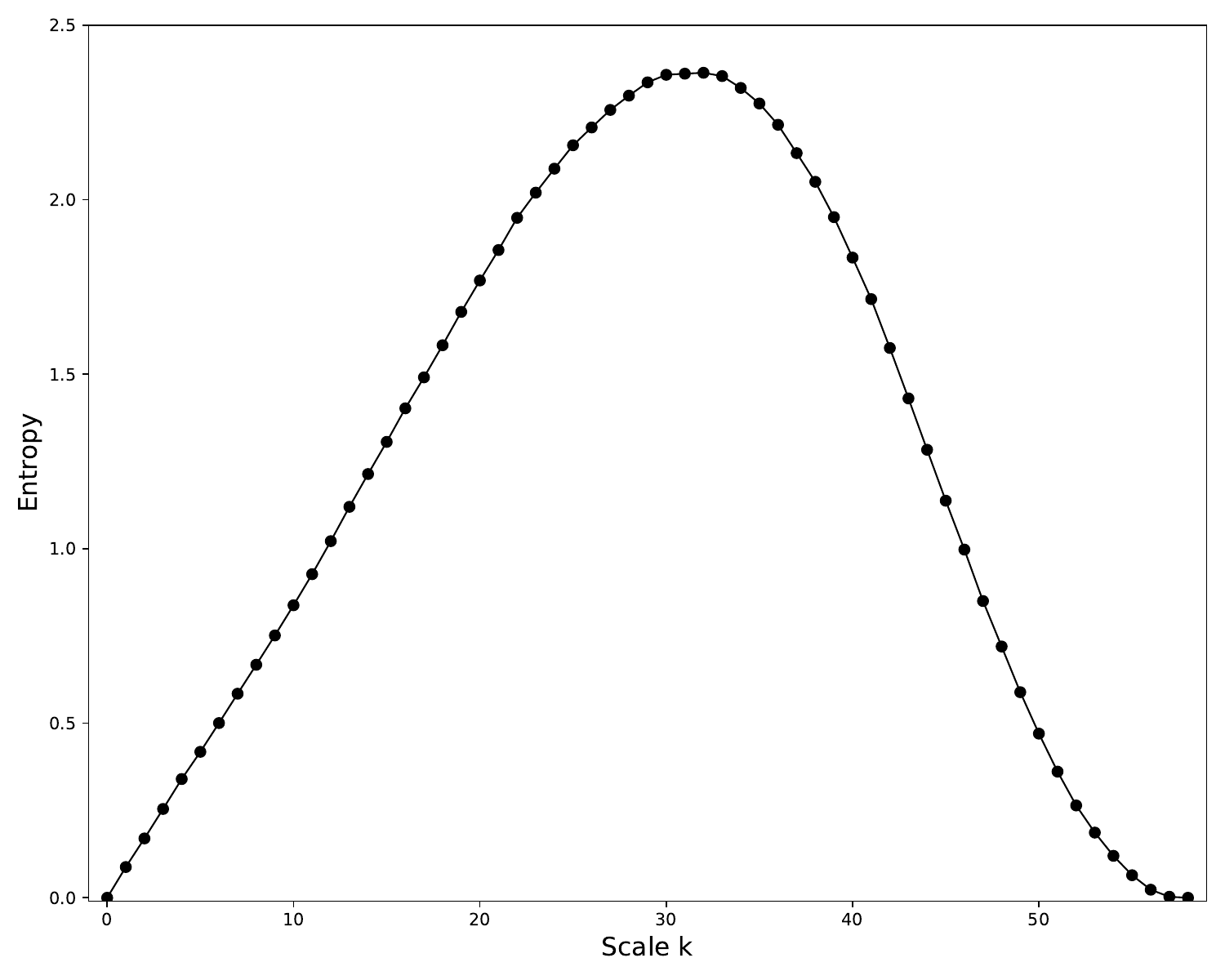}
    \caption{Aggregate entropy profile for all cycles in $\Gamma$.}
    \label{fig:entropiesallgoodcycles_a}
  \end{subfigure}
  \caption{Entropy profiles for $X_{0-6,1000}$.}
  \label{fig:entropies_gr0_r1000}
\end{figure}






\subsection{Further possible criteria for hyperbolicity}
In this subsection, we derive another consequence of hyperbolicity and introduce an additional validation criterion. The choice of criteria is not unique and allows for a certain degree of flexibility. In general, the more independent criteria are satisfied by the data, the stronger the evidence supporting the validity of the model. As example in this subsection we present and test another criterion. 
The new criterion is motivated by the following. 

Given a hyperbolic system with a physical measure $\mu$, a natural question is how well this measure can be approximated by periodic orbits. For any prescribed uncertainty threshold, there exists a periodic orbit whose invariant measure approximates $\mu$ within that threshold. In such generality, however, the period of the approximating orbit may be very large. On the other hand, obtaining very long periodic cycles from actual measurements is typically difficult in practice.
There are two possible ways to address this issue. The first is to reduce the dimension of the state space, since the approximation of a measure in lower dimension is easier to realize. Moreover, in many applications, only a few coordinates are relevant to study the phenomenon under investigation. From a measure-theoretic perspective, one may therefore project the physical measure onto the subspace determined by these coordinates and study the resulting marginal measure.
The second approach is to approximate the physical measure not by a single periodic orbit, but by a finite collection of periodic orbits, each of which may have a substantially shorter period. Hyperbolicity Consequence~IV and the corresponding criterion presented below are based on this idea.

Recall that the state space $X$ is contained in $\Real^d$. An observable is a smooth function $\varphi\colon \Real^d\to\Real$. However, in practical situations, the observables often depend only on small numbers, say $p$, of coordinates. 
From now on, we fix an observable on the reduced space $\Real^p$, $\varphi\colon \Real^p\to\Real$. 
Consider the projection, $\pi\colon \Real^d\to\Real^p$, which can be used to transfer by push forward any measure $\mu$ on $\Real^d$ to a (marginal) measure $\pi_*(\mu)$ on $\Real^p$ given as
\[ 
\pi_*(\mu)(U) := \mu\bigl(\pi^{-1}(U)\bigr)
\quad\text{where $U\subset \Real^p$ is any open set.}
\]
Observe now that
\[
\Expect[\varphi] =\int\varphi d\pi_*(\mu),
\]
so, it is enough to consider only the marginal physical measure in $\Real^p$.
A further simplification is made by observing that to estimate $\Expect[\varphi]$ within  given threshold one can restrict the measure to a finite collection of cubes. Namely, consider a finite grid that coves $\pi(A)$ by cubes $B_{\underline{i}}$ each of width $\delta$ (the uncertainty threshold):
\[
\pi(A)\subset\bigcup_{\underline{i}}B_{\underline{i}}. 
\]
The measure $\mu$ has a \emph{weight} which describes the measure restricted to these boxes, i.e.,
\[
w_{\mu} \colon \underline{i}\mapsto w_{\mu}(\underline{i})
= \pi_*(\mu)\bigl(B_{\underline{i}}\bigr).
\]
We will use the weight of $\mu$ to approximate integrals using Riemann sums:
\[
S_{\mu}\varphi := \sum_{\underline{i}}\varphi(c_{\underline{i}})w_{\mu}(\underline{i})
\quad\text{where $c_{\underline{i}}\in B_{\underline{i}}$ is the centre of the box.}
\]
We will use $S_{\mu}\varphi$ as a prediction of $\Expect[\varphi]$ with uncertainty bounded by $\delta |\varphi|_{C^1}$. This is formalized in the following lemma.
\begin{lemma}
$\bigl|\Expect[\varphi]-S_{\mu}\varphi \bigr|\leq \delta |\varphi|_{C^1}$.
\end{lemma}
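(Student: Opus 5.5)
The plan is to write $\Expect[\varphi]$ as an integral against the marginal measure and compare it cell by cell with the Riemann sum. Assume $\mu$ is a probability measure supported on $A$. We have $\Expect[\varphi]=\int\varphi\,d\pi_*(\mu)$, and $\pi_*(\mu)$ is supported on $\pi(A)\subset\bigcup_{\underline i}B_{\underline i}$. The cubes of the grid can be taken half-open so that they are pairwise disjoint. Boundaries have measure zero only for nice measures, and our measure is atomic, so we should assign each boundary point to exactly one cube. With disjoint cubes the weights satisfy $\sum_{\underline i}w_\mu(\underline i)=1$, and we can split
\[
\Expect[\varphi]=\sum_{\underline i}\int_{B_{\underline i}}\varphi\,d\pi_*(\mu).
\]
This gives
\[
\Expect[\varphi]-S_\mu\varphi=\sum_{\underline i}\int_{B_{\underline i}}\bigl(\varphi(y)-\varphi(c_{\underline i})\bigr)\,d\pi_*(\mu)(y).
\]

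The key step is a pointwise bound on each cube. For $y\in B_{\underline i}$, the mean value theorem along the segment from $c_{\underline i}$ to $y$, which stays in the convex cube, gives $|\varphi(y)-\varphi(c_{\underline i})|\le \sup|D\varphi|\cdot|y-c_{\underline i}|\le |\varphi|_{C^1}\,|y-c_{\underline i}|$. Since the cube has width $\delta$, every point lies within $\delta/2$ of the centre in the max norm, which is the distance the paper uses on $\Real^d$. We then sum over the cubes:
\[
|\Expect[\varphi]-S_\mu\varphi|\le \sum_{\underline i}\tfrac{\delta}{2}|\varphi|_{C^1}\,w_\mu(\underline i)=\tfrac{\delta}{2}|\varphi|_{C^1}\le\delta|\varphi|_{C^1}.
\]

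The main obstacle is matching norm conventions, since $|\varphi|_{C^1}$ has to control the Lipschitz constant in the chosen metric. If $|\varphi|_{C^1}$ is read as the sup of the Euclidean gradient while distances are Euclidean, the cube radius is $\sqrt{p}\,\delta/2$, and the bound $\delta$ only follows for $p\le 4$. I would therefore fix the convention explicitly. One option is to take $|\varphi|_{C^1}$ as the Lipschitz constant with respect to the max norm on $\Real^p$, i.e. the sup of the $\ell^1$ norm of the gradient; then the factor $1/2$ leaves room to spare. The other option is to note that any dimensional constant can be absorbed into the definition of the uncertainty threshold. A secondary point is that $\Expect[\varphi]$ here means the integral against the model measure $\mu$, not against the true physical measure $\mu^*$, so no approximation of $\mu^*$ by cycle measures enters this lemma.
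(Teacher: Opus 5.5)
Your proof is correct. The paper states this lemma without proof, and your cell-by-cell mean-value estimate is the standard Riemann-sum argument the statement implicitly relies on; it even gives the sharper constant $\delta/2$.

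Your two precautions are genuinely needed, not cosmetic. First, for atomic measures such as cycle measures the cubes must be made disjoint: closed, overlapping cubes would double-count atoms lying on shared faces and can break the inequality. Second, $|\varphi|_{C^1}$ must control the Lipschitz constant for the max norm the paper uses, i.e.\ the supremum of the $\ell^1$ norm of the gradient.
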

Moreover, we also have two additional lemmas that will be useful:
\begin{lemma}
$| S_{\mu}\varphi-S_{\tilde\mu}\varphi | 
\leq |\varphi|_{C^0} |\mu-\tilde\mu|_{p}.
$
where the semi-norm on the measures is defined as
\[
|\mu-\tilde\mu|_{p}=\sum_{\underline{i}}\bigl|w_{\mu}(\underline{i})-w_{\tilde\mu}(\underline{i})\bigr|.
\]
\end{lemma}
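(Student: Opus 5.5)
Unfolding the definition of the Riemann sum reduces the statement to a triangle inequality. Since both sums use the same grid $\{B_{\underline{i}}\}$ and the same centres $c_{\underline{i}}$, I would write
\[
S_{\mu}\varphi-S_{\tilde\mu}\varphi=\sum_{\underline{i}}\varphi(c_{\underline{i}})\bigl(w_{\mu}(\underline{i})-w_{\tilde\mu}(\underline{i})\bigr).
\]
The sum is finite, because the grid covering $\pi(A)$ consists of finitely many cubes. This reduces the claim to a term-by-term estimate with no limiting argument.

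Next, I would take absolute values and use the triangle inequality to bound the left-hand side by $\sum_{\underline{i}}|\varphi(c_{\underline{i}})|\,|w_{\mu}(\underline{i})-w_{\tilde\mu}(\underline{i})|$. Each $|\varphi(c_{\underline{i}})|$ is at most $\sup|\varphi|=|\varphi|_{C^0}$. This holds because $c_{\underline{i}}\in\Real^p$ and $\varphi$ is defined on all of $\Real^p$; if one prefers the supremum only over the grid region, the same bound still applies with that restricted norm. Factoring $|\varphi|_{C^0}$ out of the sum leaves exactly $\sum_{\underline{i}}|w_{\mu}(\underline{i})-w_{\tilde\mu}(\underline{i})|=|\mu-\tilde\mu|_p$, which is the claimed inequality.

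The only point needing care is bookkeeping rather than analysis. The weights $w_\mu$ and $w_{\tilde\mu}$ must be computed with respect to the same fixed grid, so that the index sets match. One should also interpret $|\varphi|_{C^0}$ as the sup norm, at least over the union of the cubes. Nothing from earlier in the paper is needed beyond the definitions of $w_\mu$, $S_\mu\varphi$ and the semi-norm $|\cdot|_p$; the argument is linear in the weights and uses no properties of the measures themselves.
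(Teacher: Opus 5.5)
Your proposal is correct. The paper states this lemma without proof, as an immediate consequence of the definitions, and your argument is exactly the intended one: write $S_{\mu}\varphi-S_{\tilde\mu}\varphi$ as a single finite sum $\sum_{\underline{i}}\varphi(c_{\underline{i}})\bigl(w_{\mu}(\underline{i})-w_{\tilde\mu}(\underline{i})\bigr)$ over the common grid, then apply the triangle inequality with $|\varphi(c_{\underline{i}})|\leq|\varphi|_{C^0}$.
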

\begin{lemma} 
Let $\mu_j$ be a measure for all $j\leq M$ where 
$\bigl|\mu_{i}-\mu_{j}\bigr|_p < \delta$ holds for all $i\neq j$.
Then
\[
\bigl|\mu_j-\mu\bigr|_{p}\leq\frac{M-1}{M}\delta
\quad\text{holds for all $j\leq M$ where}\quad
\mu := \frac{1}{M}\sum_{j=1}^{M}\mu_j.
\]
\end{lemma}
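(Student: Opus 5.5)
The bound follows from linearity of the weight map $\mu\mapsto w_\mu$ combined with the triangle inequality for the semi-norm $|\cdot|_p$. The first step is to observe that pushforward and restriction to a box are both linear in the measure. For each multi-index $\underline{i}$ this gives
\[
w_{\mu}(\underline{i})=\frac{1}{M}\sum_{l=1}^{M}w_{\mu_l}(\underline{i}),
\]
and therefore, for a fixed $j\le M$,
\[
w_{\mu_j}(\underline{i})-w_{\mu}(\underline{i})=\frac{1}{M}\sum_{l=1}^{M}\bigl(w_{\mu_j}(\underline{i})-w_{\mu_l}(\underline{i})\bigr)=\frac{1}{M}\sum_{l\neq j}\bigl(w_{\mu_j}(\underline{i})-w_{\mu_l}(\underline{i})\bigr).
\]
The term $l=j$ vanishes; this is where the factor $M-1$ rather than $M$ comes from.

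Next I take absolute values and apply the triangle inequality in each coordinate $\underline{i}$. I then sum over $\underline{i}$ and exchange the two finite sums. This yields
\[
|\mu_j-\mu|_p\le\frac{1}{M}\sum_{l\neq j}\sum_{\underline{i}}\bigl|w_{\mu_j}(\underline{i})-w_{\mu_l}(\underline{i})\bigr|=\frac{1}{M}\sum_{l\neq j}|\mu_j-\mu_l|_p .
\]
By hypothesis each of the $M-1$ terms is strictly less than $\delta$, so $|\mu_j-\mu|_p\le\frac{M-1}{M}\delta$. The inequality is strict when $M\ge2$, and for $M=1$ both sides are zero.

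There is no serious obstacle. The only point needing care is justifying that $w_\mu$ is linear in $\mu$. This holds because $\pi_*$ of a finite linear combination of measures is the same combination of the pushforwards, and evaluation on the fixed box $B_{\underline{i}}$ is linear. The sum over $\underline{i}$ is finite because the grid covering $\pi(A)$ is finite, so interchanging the sums is harmless.
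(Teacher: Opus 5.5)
Your proof is correct and follows the paper's argument: write $w_\mu$ as the average of the $w_{\mu_l}$, apply the triangle inequality in each box, exchange the finite sums, and bound the $M-1$ nonzero terms by $\delta$. Your separate treatment of $M=1$ is a small improvement. In that case the paper's final strict inequality would read $0<0$, whereas the stated bound with $\le$ holds.
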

\begin{proof}
By definition we get that 
\begin{align*}
|\mu_j-\mu|_{p} &= \sum_{\underline{i}}\bigl|w_{\mu_j}(\underline{i})-w_{\mu}(\underline{i})\bigr| 
= \sum_{\underline{i}}\Bigl|w_{\mu_j}(\underline{i})-\frac{1}{M}\sum_{k=1}^{M}w_{\mu_k}(\underline{i})\Bigr|
\\
&\leq \frac{1}{M}\sum_{\underline{i}}\sum_{k=1}^{M}\bigl|w_{\mu_j}(\underline{i})-w_{\mu_k}(\underline{i})\bigr|
= \frac{1}{M}\sum_{k=1}^{M}|\mu_j-\mu_k|_{p}
< \frac{M-1}{M}\delta.
\end{align*}
This concludes the proof.
\end{proof}
Observe that every cycle $\gamma$ carries a cycle measure which has the corresponding weight, which we henceforth denote by $w_{\gamma}$.
\begin{description}
\item[Hyperbolicity Consequence~IV.] 
Given a hyperbolic system with physical measure $\mu^*$. Then for every $\delta>0$ and $p\in\Natural$ there exists a non empty finite collection of cycles $\Gamma^*=\left\{{\gamma}\right\}$ such that 
\[ |\mu_{\gamma}-\mu^*|_p < \delta
\quad\text{for every $\gamma\in\Gamma^*$.}
\] 

\item[Validation Criterion~IV.] Let $\delta^{\scriptscriptstyle M}_{\scriptscriptstyle \mathrm{IV}}$ be the smallest number such that the following holds.
There exist cycles $\Gamma'=\{\gamma_1,\dots,\gamma_M\}\subset\Gamma$ with $M\geq 2$ such that 
\begin{equation}\label{eq:diffmuslessdelta}
  \left|\mu_{\gamma_i}-\mu_{\gamma_j}\right|_p<\frac{M}{M-1}\delta^{\scriptscriptstyle M}_{\scriptscriptstyle \mathrm{IV}}
  \quad\text{for all $i\neq j$}
\end{equation}
and 
\begin{equation}\label{eq:diffmuslessdelta1}
\bigl|\mu_{\gamma_i}(U_k)-\mu_{\gamma_j}(U_k)\bigr|< 2\delta^{\scriptscriptstyle M}_{\scriptscriptstyle \mathrm{IV}}
\quad\text{for every $0\leq k\leq m-1$.}
\end{equation}
\end{description}
Let $M$ be maximal such that $\delta^{\scriptscriptstyle M}_{\scriptscriptstyle \mathrm{IV}}$ is sufficiently small, then these $M$ cycles in $\Gamma'$ play the role of the cycles ensured to exist in a hyperbolic context; see Hyperbolicity Consequence~IV.
\begin{remark}\label{Rem:third}
   Similarly to \cref{Rem:first,Rem:second,Rem:fourth}, given a hyperbolic system with a chosen finite grid of cubes $B_{\underline{i}}$ covering $\pi(A)$, it is possible to select an arbitrary large number of periodic orbits $\gamma_k$ that do not satisfy Validation Criterion~IV. From this perspective, one may interpret Validation Criterion~IV as confirming that the observed cycles arising from the physical process satisfy an even stronger statistical  criterion than the ones satisfying only Validation Criterion~I, II and III. In particular, the observed cycles in $\Gamma'$ are not arbitrary cycles and have an extra common property. Indeed, they appear to have even more statistical organization, namely they assign the same weight to the boxes in the grid. 
\end{remark}
\begin{remark}
The highest we can choose $M$ such that $\delta^{\scriptscriptstyle M}_{\scriptscriptstyle \mathrm{IV}}$ is small enough in the Validation Criterion~IV the more reliable the final model will be. 
\end{remark}

\begin{example}[Cell signalling process] 
For data in \cref{sec:BiologicalUseCase}, consider the projection $p=2$ to the coordinates corresponding to the grid squares at position $152$ and $153$ and also the projection $p=2$ to the coordinates corresponding to the grid squares at position $97$ and $113$.
 
There are two cycles, $M=2$, that satisfy Validation Criterion~IV with $\delta^{\scriptscriptstyle 2}_{\scriptscriptstyle \mathrm{IV}}=0.1335$ and three cycles, $M=3$, with $\delta^{\scriptscriptstyle 3}_{\scriptscriptstyle \mathrm{IV}}=0.3114$, for grid squares at position $152$ and $153$. For the projection to to the grid squares at position $97$ and $113$ there are two cycles that satisfy Validation Criterion~IV for with $\delta^{\scriptscriptstyle 2}_{\scriptscriptstyle \mathrm{IV}}=0.1817$ and three cycles with $\delta^{\scriptscriptstyle 3}_{\scriptscriptstyle \mathrm{IV}}=0.4143$.


These are found in the following way. First observe that \cref{eq:diffmuslessdelta1} in Validation Criterion~IV is satisfied for all pair of cycles. We will concentrate on the condition in \cref{eq:diffmuslessdelta}. Define the matrix $30\times 30$, $D_2$, as 
\[
D_2(i,j)=\left|\mu_{\gamma_i}-\mu_{\gamma_j}\right|_p.
\]
Start with $M=2$ and choose the pair $\gamma_{i_0},\gamma_{j_0}$ such that $
\delta^{\scriptscriptstyle 2}_{\scriptscriptstyle \mathrm{IV}}=D_2({i_0}, {j_0})$ is the smallest entry of the matrix. 
Take $M=3$ and define the array $D_3$ as 
\[ 
D_3({i}, {j}, {k}) = 
\max\bigl\{D_2(i,j), D_2(i,k), D_2(j,k)\bigr\}.
\]
Choose the triple $\gamma_{i_0},\gamma_{j_0}, \gamma_{k_0}$ such that $\delta^{\scriptscriptstyle 3}_{\scriptscriptstyle \mathrm{IV}}=D_3({i_0}, {j_0}, {k_0})$ is a smallest entry of the matrix $D_3$.  And so on.
\end{example} 

The previous example shows that, in the case of the Cell Signalling Process, the cycle measures differ significantly from one another. This confirms the concerns raised at the beginning of this subsection and is consistent with what one would expect: the available data do not support the existence of a sufficiently long cycle that could provide an accurate approximation of the physical measure of the process. Consequently, the statistical behaviour can only be captured by considering the collection of cycle measures as a whole. Alternatively, a more accurate approximation would require a larger amount of data.

\section{Cell signalling process for a kidney cell population}\label{sec:BiologicalUseCase}
We here provide a more detailed description of the scientific discovery task related to  the cell signalling process that we have used in \cref{sec:OverviewOfApproach,sec:method} for illustrating our approach.
It relates to understanding how cells synchronise against each other within a cell population. This is part of a larger quest in identifying emerging properties of cell populations that relate to directional/positional awareness and how individual cells contribute to form a tissue. 

\subsection{Description of the experiment and the data}\label{sec:DescExp}
The specific cell population consists of monoclonal Madin–Darby canine kidney (MDCK) cells, a model mammalian epithelial cell line widely used in studies of cell polarity, cell–cell adhesion, collective cell motility, toxicity, and responses to growth factors \cite{Hurley:2003aa,Dukes:2011aa}, and a standard system for studying collective cellular behaviour.

The experimental data consist of long time-lapse live-cell fluorescence microscopy recordings of intracellular calcium activity in such a population. Cells are plated at low density and grow into polarised monolayers, forming a spatially structured tissue with distinct interior and boundary regions. The monolayer is imaged by wide-field epifluorescence microscopy
\cite{Webb:2012aa} using a 63×/1.4 NA oil-immersion objective. Calcium signaling is monitored with GCaMP6m \cite{Chen:2013aa,Zhang:2023aa}, a genetically encoded calcium indicator stably expressed in cells, consisting of a circularly permuted green fluorescent protein fused to the calcium-binding protein calmodulin and the calmodulin-binding peptide M13. Binding of Ca²⁺ triggers a conformational change that strongly increases the green fluorescence (peak excitation 480 nm, peak emission 510 nm), so that the fluorescence intensity in the 2D microscopy images reports the intracellular Ca²⁺ concentration. The fluorescence measurements are converted into normalised calcium activity traces for each pixel.

To summarise, the data set is a time series of 512 × 512 images capturing the spatial distribution of Ca²⁺ activity across the population, equivalently, a high-dimensional collection of coupled time series describing the evolution of signalling activity in a dynamically interacting cellular population. The scientific discovery task is to investigate whether these data reveal emergent properties belonging to the cell population as a whole, making it a natural test case for the construction of data-driven dynamical models.

\subsection{Application of the model to the Cell Signalling Process}
\label{Sec:application_corr}
According to the model built above, the first conclusion we can deduce is that the cell signalling process is chaotic and can be described by a Markov chain with entropy approximately $2.3$. This Markov process allows to use statistical tools. 

As an example we will use our model to indicate how one could study the possible interaction between two not necessarily neighbouring cells. Each cell is approximately contained in a square of the grid in  \cref{Fig2}. The two squares containing the cells are labelled by $\underline i$ and $\underline j$. 

The question we would like to address is if and how long it takes to two cells to react to each other. To answer this question we are going to look at the delayed correlation between the signals of the two cells. 

We start to define properly what a delay correlation is in the context of our Markov process. Let $\xi:\Real^d\to\Real$ where $d=256$ is the dimension of the space where the attractor lives. For example $\xi$ can be the projection to the $\underline{i}^{th}$ coordinate. Let $k^*$ be the scale of the system and consider the graph $G_{k^*}$. The function $\xi$ induces a function $\xi_{k^*} \colon G_{k^*}\to\Real$ which is defined as follows: Notice that if $v$ is a vertex in $G_{k^*}$, $v\neq u_{k^*}$, then $v\in A\subset\Real^d$ and set $\xi_{k^*}(v):=\xi(v)$. Otherwise, $\xi_{k^*}$ is set to the average value over $U_{k^*}$:
\[ 
\xi_{k^*}( u_{k^*})
:=\frac{\int_{U_{k^*}}\xi d\mu}{\mu(U_{k^*})}
=\frac{\sum_{\gamma\in\Gamma}\sum_{x\in U_{k^*}\cap\gamma}{\varphi(x)}/{T_{\gamma}}}{\sum_{\gamma\in\Gamma}{N_{\gamma}(k^*)}/{T_{\gamma}}}.
\]
Observe that 
\begin{equation}\label{eq:intxi}
\int\xi_{k^*} d\mu_{k^*}=\sum_{v\in G_{k^*}}\xi_{k^*}(v)\mu_{k^*}(v)
\end{equation}
and a more careful calculation yields the following result.
\begin{lemma}
  \begin{equation*}
    \int\xi_{k^*} d\mu_{k^*} = \frac{1}{\#\Gamma} \sum_{\gamma \in \Gamma} \frac{1}{T_{\gamma}}\sum_{x \in \gamma} \xi(x)
  \end{equation*}
\end{lemma}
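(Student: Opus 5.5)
The plan is to start from the expression for the integral as a finite sum over the vertices of $G_{k^*}$, and split that sum into the root $u_{k^*}$ and the non-root vertices $v\in G_{k^*}\setminus\{u_{k^*}\}$. Each non-root vertex corresponds to a single point of $A\setminus U_{k^*}$, via the projection $p_{k^*}$. By definition $\mu_{k^*}([v])=\mu(v)$, and $\mu=\frac{1}{\#\Gamma}\sum_{\gamma}\mu_\gamma$ with $\mu_\gamma=\frac{1}{T_\gamma}\sum_{a\in\gamma}\delta_a$. Since cycles in $A$ are pairwise disjoint, a point $v\notin U_{k^*}$ lies on exactly one cycle $\gamma_v$. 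Hence $\mu(v)=\frac{1}{\#\Gamma\,T_{\gamma_v}}$, and the non-root part of the sum equals
\[
\frac{1}{\#\Gamma}\sum_{\gamma\in\Gamma}\frac{1}{T_\gamma}\sum_{x\in\gamma\setminus U_{k^*}}\xi(x),
\]
after regrouping the vertices according to the cycle containing them.

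For the root term we use $\mu_{k^*}(u_{k^*})=\mu(U_{k^*})$, so that
\[
\xi_{k^*}(u_{k^*})\,\mu_{k^*}(u_{k^*})=\int_{U_{k^*}}\xi\,d\mu .
\]
The normalising factor $\mu(U_{k^*})$ in the definition of $\xi_{k^*}(u_{k^*})$ cancels exactly here. Expanding $\mu$ again as an average of cycle measures gives
\[
\int_{U_{k^*}}\xi\,d\mu=\frac{1}{\#\Gamma}\sum_{\gamma}\frac{1}{T_\gamma}\sum_{x\in\gamma\cap U_{k^*}}\xi(x).
\]
This agrees with the second displayed formula for $\xi_{k^*}(u_{k^*})$, once one reads the $\varphi$ there as $\xi$ and uses $\sum_{\gamma}N_\gamma(k^*)/T_\gamma=\#\Gamma\,\mu(U_{k^*})$.

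Adding the two parts, the inner sums over $\gamma\setminus U_{k^*}$ and $\gamma\cap U_{k^*}$ combine into a sum over all of $\gamma$, which is the claimed identity. Equivalently, the whole argument says $\int\xi_{k^*}\,d\mu_{k^*}=\int_A\xi\,d\mu$, because $\xi_{k^*}\circ p_{k^*}$ is the conditional expectation of $\xi$ with respect to the partition $\mathcal D_{k^*}$, and $(p_{k^*})_*\mu=\mu_{k^*}$ on one-cylinders.

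The only delicate step is the bookkeeping identification $\mu_{k^*}(u_{k^*})=\mu(U_{k^*})$. This means reading $\mu(u_k)$ in the definition of $\mu_k$ as the $\mu$-mass of the set $U_k$, which is consistent with \cref{lem:TransProb}, where $\mu_k(u_k)$ corresponds to $\frac{1}{\#\Gamma}\sum_\gamma N_\gamma(k)/T_\gamma$. A related point is that $N_\gamma$ counts over $0\le t\le T_\gamma$. I will interpret this as counting each point of the cycle once, so that $N_\gamma(k)/T_\gamma=\mu_\gamma(U_k)$; otherwise the normalisation could pick up a spurious extra term.
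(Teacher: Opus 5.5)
Your proposal is correct and is essentially the computation the paper intends; the paper gives no proof beyond ``a more careful calculation'' from \eqref{eq:intxi} and the definition of $\xi_{k^*}(u_{k^*})$. As you do, the root term collapses to $\int_{U_{k^*}}\xi\,d\mu$ and the non-root vertices contribute the points of $A\setminus U_{k^*}$. The one step to make explicit is that your regrouping needs every point of $A\setminus U_{k^*}$ to be a vertex of $G_{k^*}$, i.e.\ every cycle in $\Gamma$ meets $U_{k^*}$ (Validation Criterion~I, implicit in $p_{k^*}$ being defined on all of $A$); a cycle missing $U_{k^*}$ would drop out of the left-hand side and the identity would fail.
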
 

Every $\gamma\in\Sigma_{k^*}$ gives rise to the corresponding time series 
\[
\Xi(t;\gamma)=\xi_{k^*}\bigl((\sigma^t\gamma)(0)\bigr) 
\quad\text{with $t\in\Integer$.}
\]

Now choose two normalized functions $\xi,\tilde\xi:\Real^d\to\Real$ with the corresponding time series $\Xi$ and $\tilde\Xi$. The normalization requires that
\[ 
\Expect[\xi_{k^*}]=\int\xi_{k^*}d\mu_{k^*}=\int\xi d\mu=0.
\]
If needed replace $\xi$ by $\xi-\int\xi d\mu$.
The correlation of $\xi$ and $\tilde\xi$ is defined as the cosine of the angle between $\Xi$ and $\tilde\Xi$. More precisely, let $\gamma\in\Sigma^*_{k^*}$ be a $\mu^*_{k^*}$-typical point, then 
\begin{align*}
\operatorname{Corr}(\xi,\tilde\xi)
&= \lim_{T\to\infty}
   \dfrac{\frac{1}{T} \sum_{0\leq t<T}\Xi(t;\gamma)\tilde\Xi(t;\gamma)}
   {\Bigl(\frac{1}{T}\sum_{0\leq t<T}\Xi^2(t;\gamma)\Bigr)^{1/2}
    \Bigl(\frac{1}{T}\sum_{0\leq t<T}\tilde\Xi^2(t;\gamma)\Bigr)^{1/2}
   }
\\[0.5em]
&= \dfrac{\int\xi_{k^*}\tilde\xi_{k^*}d\mu_{k^*}}{\left(\int\xi^2_{k^*}d\mu_{k^*}\right)^{1/2}\left(\int\tilde\xi^2_{k^*}d\mu_{k^*}\right)^{1/2}}.
\end{align*}
This follows from the Birkhoff Ergodic Theorem, In particular, the independence of the correlation on $\gamma$. The integrals in the previous formula can be calculated using \cref{eq:intxi}. 

The \emph{$s$-delayed} version of $\xi_k \colon \Sigma_{k}\to \Real$ is 
\[
\xi_k^{(s)}=\xi_k\circ \sigma_{k}^s 
\quad\text{with $s\in \Integer$.}
\]
Observe that if $\xi$ is normalized, then each delayed $\xi^{(s)}_k$ is normalized.

The \emph{$s$-delayed correlation} of the normalized functions
$\xi,\tilde\xi:\Real^d\to\Real$ is $\operatorname{Corr}\left(\xi,\tilde\xi^{(s)}\right)$. Namely,
\[
\operatorname{Corr}^{(s)}(\xi,\tilde\xi)
=
\dfrac{\int\xi_{k^*}\tilde\xi_{k^*}^{(s)}d\mu_{k^*}}{\Bigl(\int\xi^2_{k^*}d\mu_{k^*}\Bigr)^{1/2}\Bigl(\int\tilde\xi^2_{k^*}d\mu_{k^*}\Bigr)^{1/2}}.
\]
The \emph{correlation function} of $\xi,\tilde\xi$ is
\[
s\mapsto \operatorname{Corr}^{(s)}\left(\xi,\tilde\xi\right)
\]
Recall that $\underline i$ and $\underline j$ are the locations of the squares containing the cells. The integrals needed to calculate the delayed correlation can be determined using \cref{eq:intxi} and

\begin{lemma} Let $\xi, \tilde\xi:\Real^d\to \Real$ be defined by $\xi(\underline x)=x_{\underline i}$ and $\tilde \xi(\underline x)=x_{\underline j}$.
Then
\[
\int\xi_{k^*}\tilde\xi_{k^*}^{(s)}d\mu_{k^*}
 =\sum_{v_0 \in G_{k^*}} \xi_{k^*}(v_0) \mu_{k^*}(v_0)\left[
 \sum_{v_s \in G_{k^*}} \tilde \xi_{k^*}(v_s) P_{k^*}^s(v_0, v_s))\right]
\]
where $P_{k^*}$ is the transition matrix of the Markov chain $(\Sigma_{k^*}, \mu_{k^*}, \sigma_{k^*})$.
Moreover, the delayed correlations decay exponentially,
\[
\lim_{s\to\infty}\int\xi_{k^*}\tilde\xi_{k^*}^{(s)}d\mu_{k^*}=0.
\]
\end{lemma}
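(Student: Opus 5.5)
The plan is to compute the integral by decomposing over cylinder sets, and then to use the convergence of $P_{k^*}^s$ from \cref{transitionmatrix} for the limit.

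For a path $\gamma\in\Sigma_{k^*}$, the function $\xi_{k^*}\cdot\tilde\xi^{(s)}_{k^*}$ depends only on the coordinates $\gamma(0)=v_0$ and $\gamma(s)=v_s$. It is therefore constant on each set $\{\gamma(0)=v_0,\ \gamma(s)=v_s\}$, and
\[
\int\xi_{k^*}\tilde\xi^{(s)}_{k^*}\,d\mu_{k^*}=\sum_{v_0,v_s\in G_{k^*}}\xi_{k^*}(v_0)\,\tilde\xi_{k^*}(v_s)\,\mu_{k^*}\bigl(\{\gamma(0)=v_0,\ \gamma(s)=v_s\}\bigr).
\]
This set is a disjoint union of cylinders $[v_0,v_1,\dots,v_s]$ over all intermediate vertices. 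By the definition of $\mu_{k^*}$ on cylinders, the measure of each cylinder is $\mu(v_0)\prod_i p_{v_i,v_{i+1}}$. Summing over $v_1,\dots,v_{s-1}$ gives $\mu_{k^*}(v_0)P^s_{k^*}(v_0,v_s)$, by the usual matrix-product expansion of $P^s_{k^*}$. Regrouping the sum over $v_s$ inside the bracket yields the displayed formula. Since $\xi$ and $\tilde\xi$ are coordinate projections, the values $\xi_{k^*}(v)$ are just the ones defined above (the average value at the root $u_{k^*}$); nothing beyond this is needed.

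For the limit, I will invoke \cref{transitionmatrix}: $xP^s_{k^*}\to(\sum_i x_i)\,m_{k^*}$ for every vector $x$. I apply it to the row vector $x=(\xi_{k^*}(v_0)\mu_{k^*}(v_0))_{v_0}$. Then $\sum_{v_0}x_{v_0}=\int\xi_{k^*}d\mu_{k^*}=0$ by the normalization, so $xP^s_{k^*}\to0$. Pairing with the fixed column vector $(\tilde\xi_{k^*}(v_s))_{v_s}$ gives
\[
\int\xi_{k^*}\tilde\xi^{(s)}_{k^*}\,d\mu_{k^*}=xP^s_{k^*}\tilde\xi_{k^*}\to0.
\]
The convergence is exponential whenever the convergence in \cref{transitionmatrix} is. If the coordinate functions are not already normalized, I first replace them by $\xi-\int\xi\,d\mu$, as prescribed in the text; this does not affect the first identity, which is linear.

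The main obstacle is the exponential convergence of $P^s_{k^*}$ itself. It requires $P_{k^*}$ to be irreducible and aperiodic, and \cref{transitionmatrix} takes this for granted. Irreducibility holds because every loop passes through the root $u_{k^*}$. For aperiodicity I would use the self-loop $p_{u_{k^*}u_{k^*}}>0$ whenever some $F_\gamma(k^*)>0$, using \cref{lem:TransProb}. Failing that, aperiodicity follows if the loop lengths have greatest common divisor $1$, which holds for the periods in \cref{tab:period_orbits}. Once this holds, Perron--Frobenius gives a spectral gap and hence the exponential rate.
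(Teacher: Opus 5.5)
Your proposal is correct and is essentially the paper's proof: you expand over cylinders $[v_0,\dots,v_s]$, sum out the intermediate vertices to get $\mu_{k^*}(v_0)P^s_{k^*}(v_0,v_s)$, and pass to the limit using \cref{transitionmatrix} and the normalization $\int\xi_{k^*}\,d\mu_{k^*}=0$ (the paper uses the row-wise convergence $P^s_{k^*}(v_0,\cdot)\to m_{k^*}$ and factors the limit as $\int\xi_{k^*}\,d\mu_{k^*}\cdot\int\tilde\xi_{k^*}\,d\mu_{k^*}$, which is equivalent to your weighted-row-vector version). Your aperiodicity remark usefully makes explicit a hypothesis the paper leaves implicit in \cref{transitionmatrix}; note only that the loops of $G_{k^*}$ have lengths equal to return times to $U_{k^*}$, not cycle periods, but your conclusion still holds because, under Validation Criterion~I, each period is a sum of return times, so any common divisor of the return times divides every period.
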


\begin{proof} 
To start with, observe
\begin{align*}
 \int\xi_{k^*}\tilde\xi_{k^*}^{(s)}d\mu_{k^*}
 &=
 \!\!\!\!\!\sum_{[v_0, v_1, \cdots, v_s]} \xi_{k^*}(v_0) \tilde \xi_{k^*}(v_s) \mu_{k^*}([v_0, v_1, \cdots, v_s])
 \\ 
 &=
 \sum_{v_0,v_s} \xi_{k^*}(v_0) \tilde \xi_{k^*}(v_s) 
 \!\!\!\!\!\sum_{[v_0, x_1, \dots, x_{s-1}, v_s]}\mu_{k^*}([v_0, x_1,\cdots, x_{s-1}, v_s])
 \\ 
 &=
 \sum_{v_0} \xi_{k^*}(v_0) \mu_{k^*}(v_0)\left[
 \sum_{v_s} \tilde \xi_{k^*}(v_s) P_{k^*}^s(v_0, v_s))\right]. 
\end{align*} 
The converges as $s \to \infty$ is then 
\begin{multline*}
\lim_{s\to\infty}\int\xi_{k^*}\tilde\xi_{k^*}^{(s)}d\mu_{k^*}
 =
 \lim_{s\to\infty}\sum_{v_0} \xi_{k^*}(v_0) \mu_{k^*}(v_0)\left[
 \sum_{v_s} \tilde \xi_{k^*}(v_s) P_{k^*}^s(v_0, v_s))\right]
 \\ 
 =
 \left[\sum_{v_0} \xi_{k^*}(v_0) \mu_{k^*}(v_0)\right]\cdot\left[
 \sum_{v_s} \tilde \xi_{k^*}(v_s) \mu_{k^*}(v_s)\right]
 =
 \int\xi_{k^*} d\mu_{k^*}\cdot \int\xi_{k^*} d\mu_{k^*}=0.
\end{multline*} 
Here we used that $\xi$ and $\tilde \xi$ are normalized and \cref{transitionmatrix}.
\end{proof}

\begin{figure}[ht]
    \centering
    \begin{subfigure}{0.3\linewidth}
        \centering
        \includegraphics[width=\linewidth]{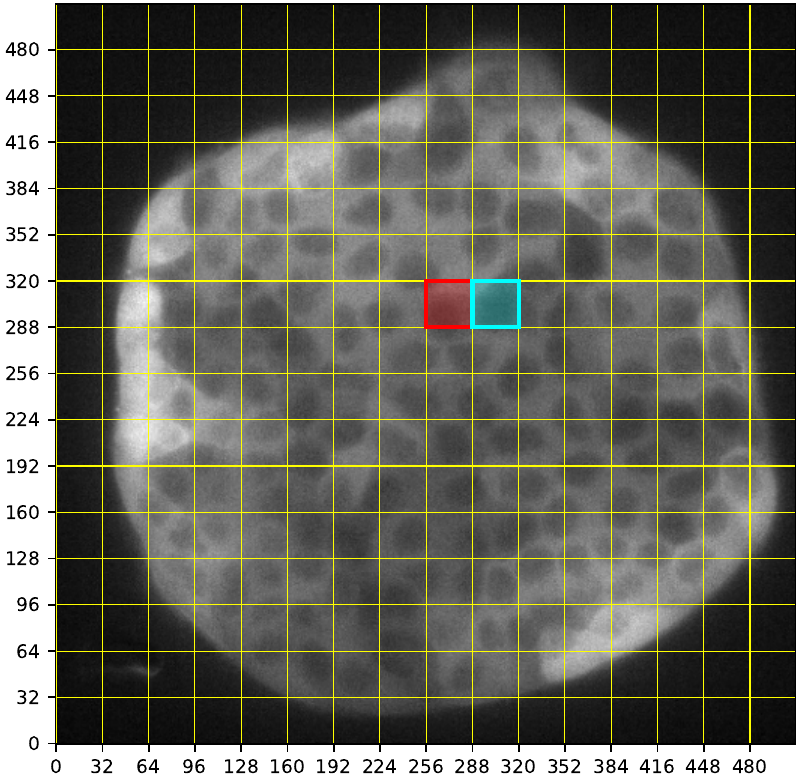}
        \caption{Positions 152 and 153.}
        \label{fig:positions15215}
    \end{subfigure}
    \hfill
    \begin{subfigure}{0.3\linewidth}
        \centering
        \includegraphics[width=\linewidth]{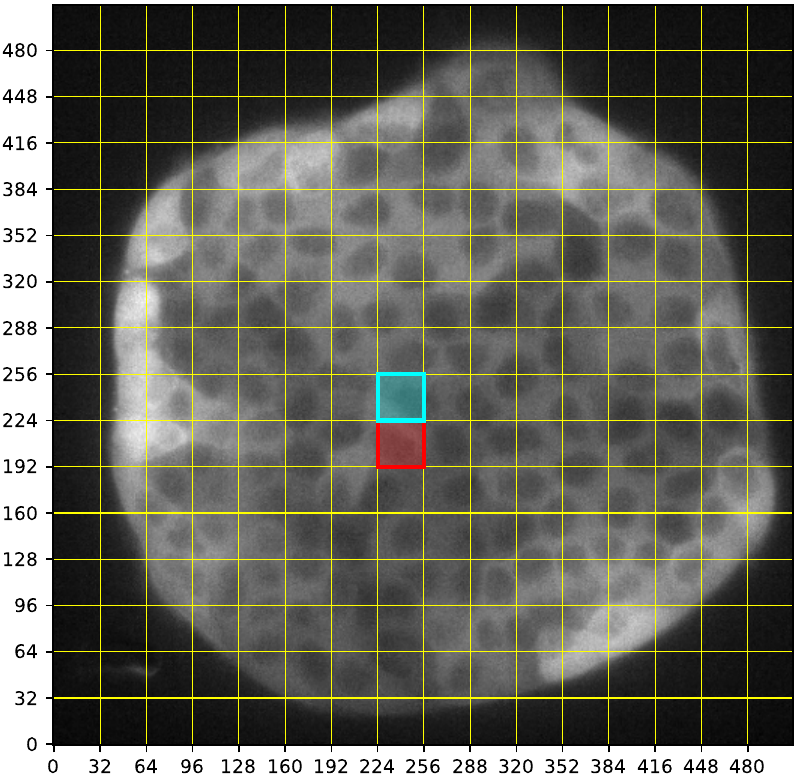}
        \caption{Positions 103 and 119.}
        \label{fig:positions103119}
    \end{subfigure}
    \hfill
    \begin{subfigure}{0.3\linewidth}
        \centering
        \includegraphics[width=\linewidth]{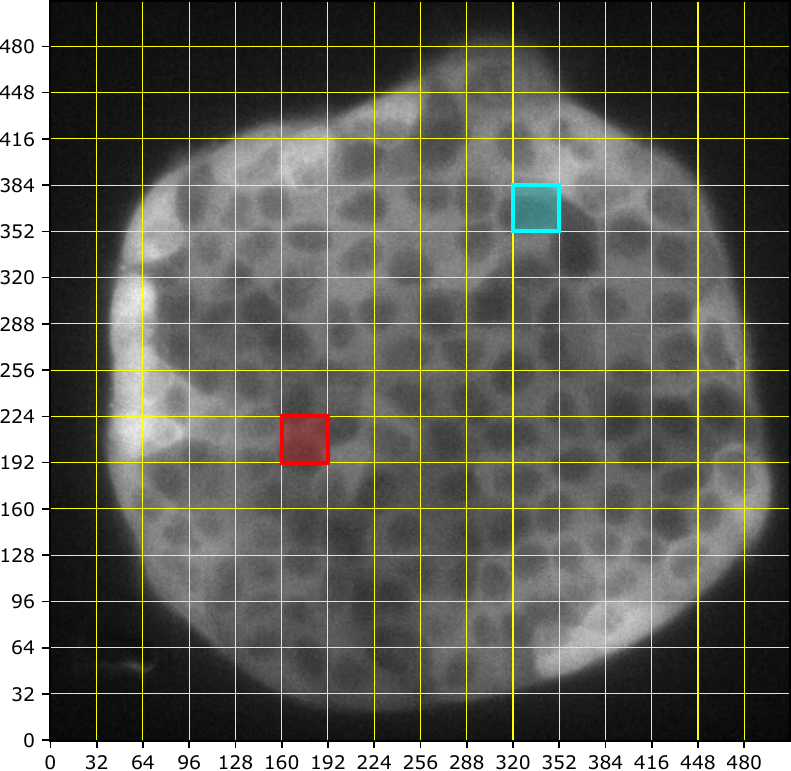}
        \caption{Positions 101 and 186.}
        \label{fig:positions101186}
    \end{subfigure}
    \caption{Position of cells under analysis.}
    \label{fig:positions}
\end{figure}

\begin{figure}[ht]
\centering
\begin{subfigure}{0.48\linewidth}
    \centering
    \includegraphics[width=\linewidth]{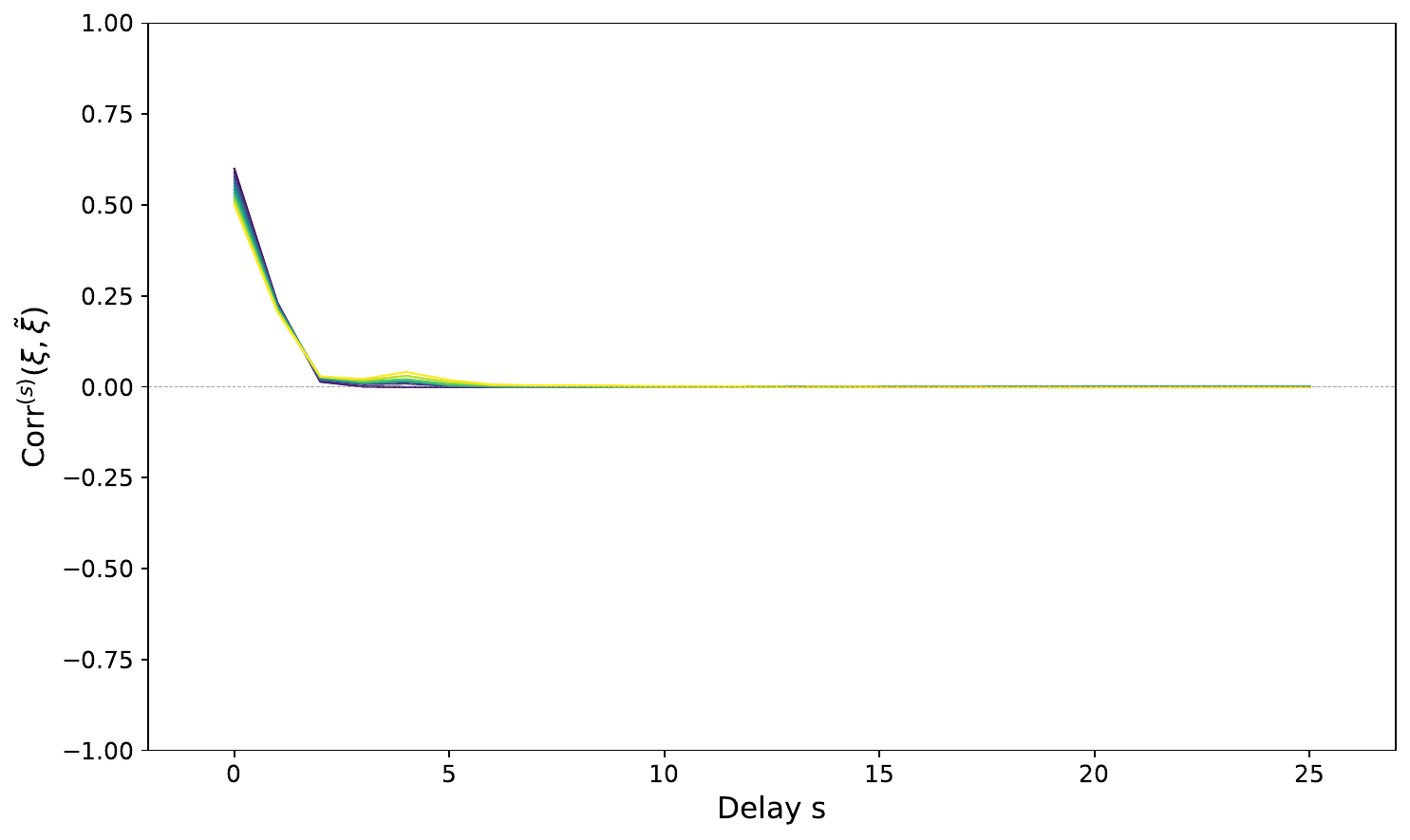}
    \caption{Position 152 and 153.}
    \label{fig:allcorr_allcycles}
\end{subfigure}
\hfill
\begin{subfigure}{0.48\linewidth}
    \centering
    \includegraphics[width=\linewidth]{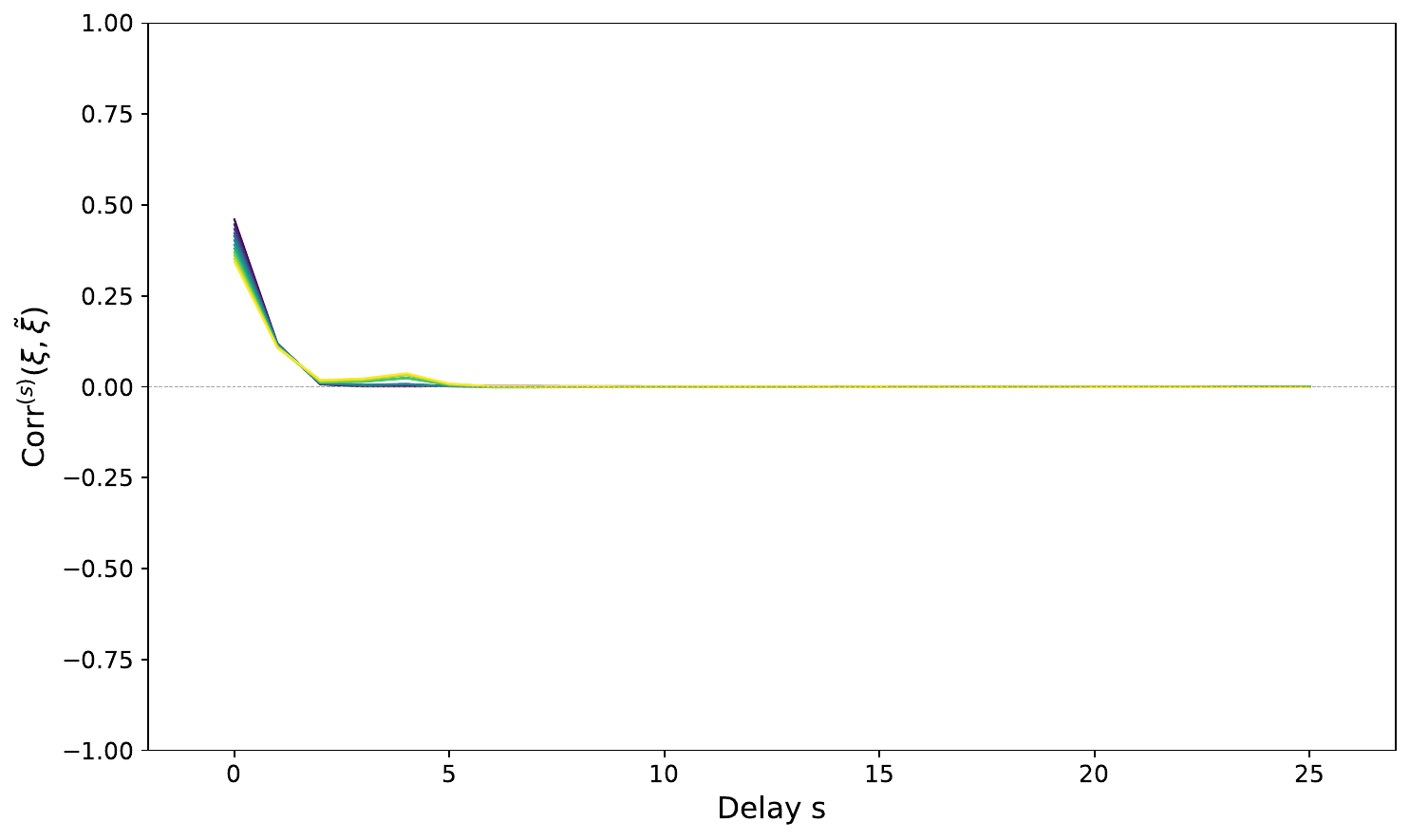}
    \caption{Position 103 and 119.}
    \label{fig:allcorr_allcycles1}
\end{subfigure}
\hfill
\begin{subfigure}{0.48\linewidth}
    \centering
    \includegraphics[width=\linewidth]{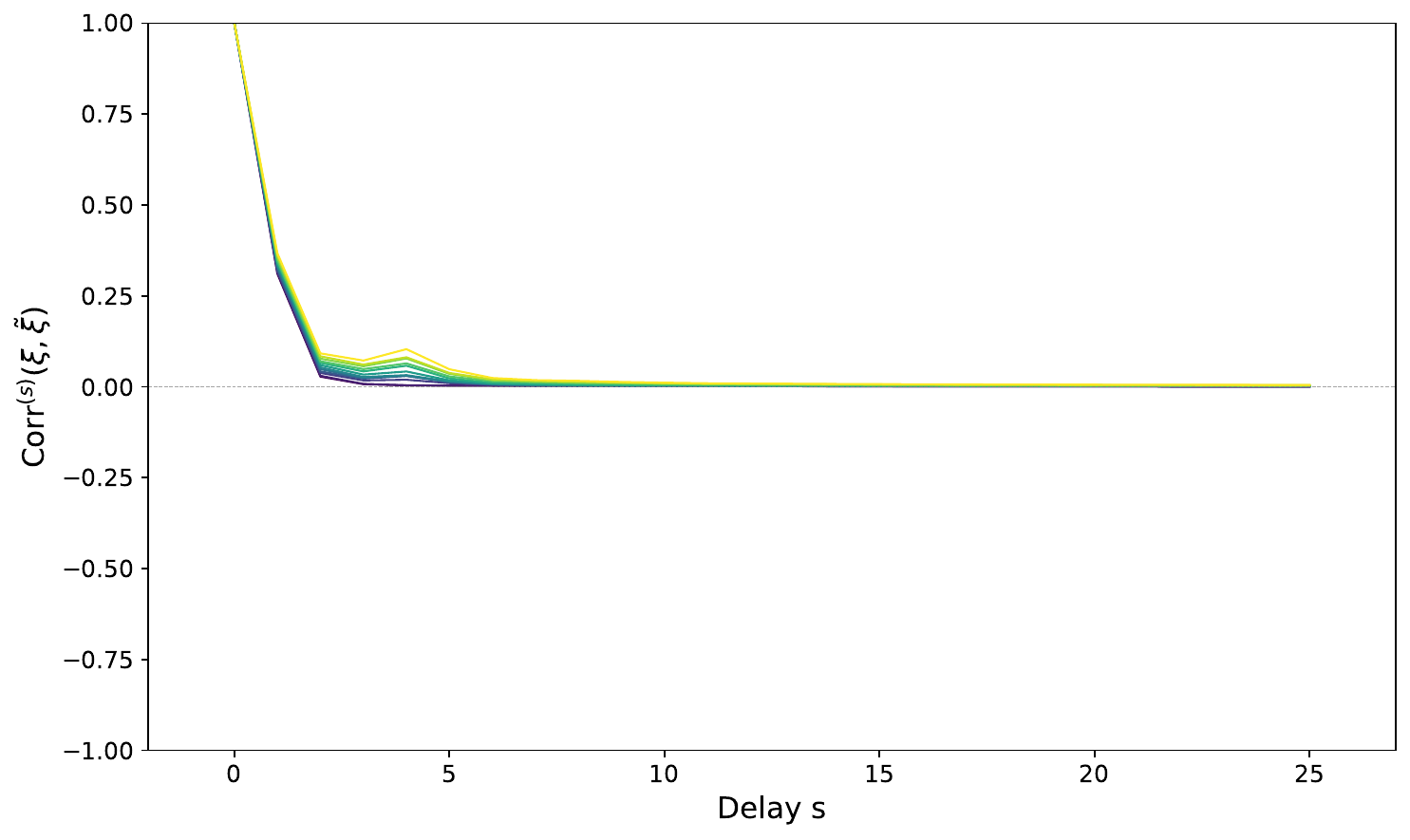}
    \caption{Position 152 and 152.}
\label{fig:allcorr_allcycles_gr0_152_152}
\end{subfigure}
\hfill
\begin{subfigure}{0.48\linewidth}
    \centering
    \includegraphics[width=\linewidth]{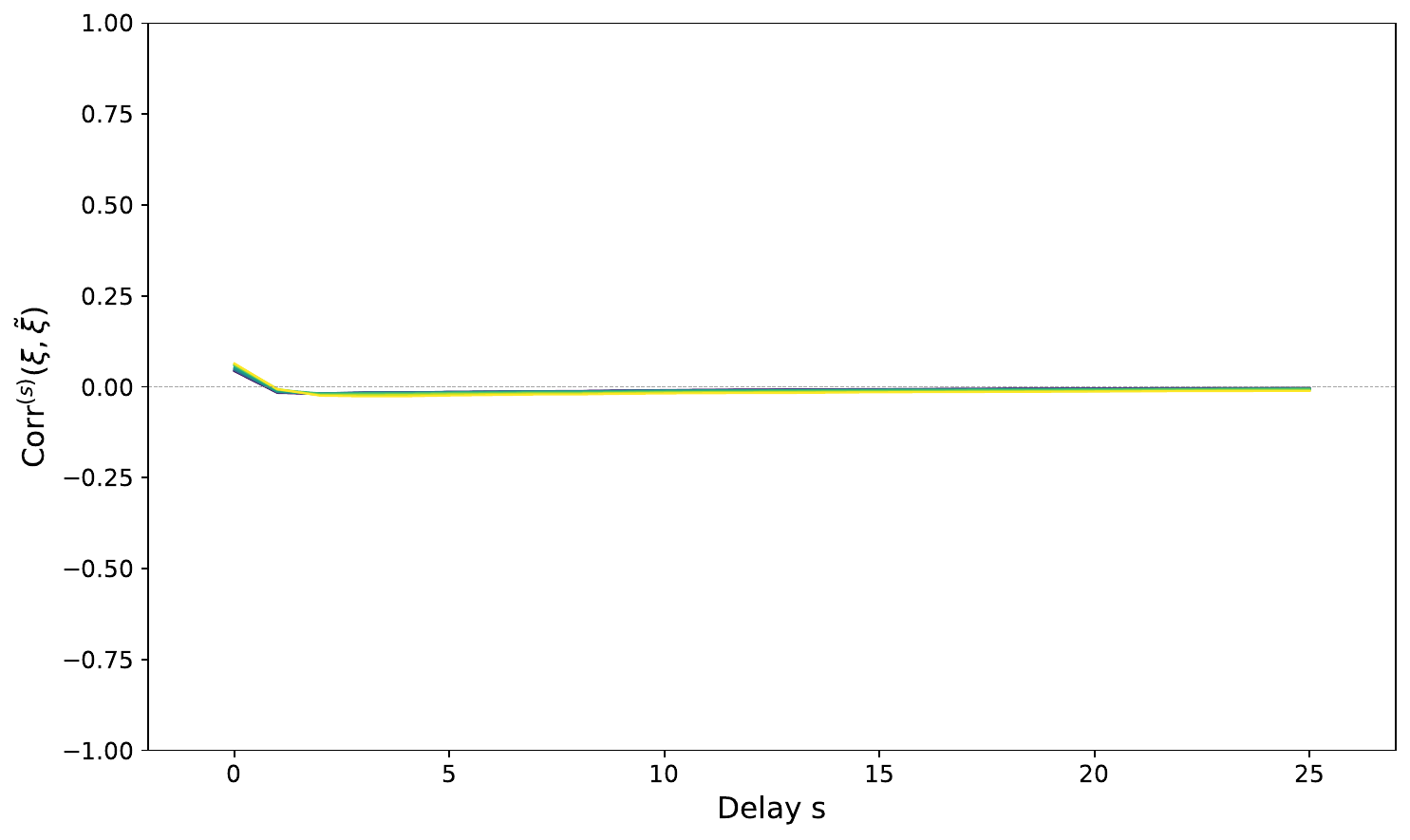}
    \caption{Position 101 and 186.}
\label{fig:allcorr_allcycles_gr0_101_186}
\end{subfigure}
\hfill
\begin{subfigure}{0.25\linewidth}
    \centering
    \raisebox{1.3cm}{\includegraphics[width=\linewidth]{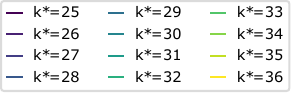}}
\end{subfigure}
\caption{Delayed correlation profiles for three representative pairs of cells for $X_{0-6,1000}$, shown for all $k^*\in S$.}
\label{fig:allcorr_subfigures}
\end{figure}

\

\begin{example}[Cell signalling process] 
Consider the system in \cref{sec:CellProcess}. 
\Cref{fig:allcorr_allcycles,fig:allcorr_allcycles1,fig:allcorr_allcycles_gr0_152_152}
show non trivial correlation between certain pairs of cells. First of all observe that the correlation graphs are essentially the same for any choice of model $k^*\in S$. This indicates consistency of the method.

Observe that all pairs of cells considered have a non zero initial correlation. This indicates a global instantaneous organization. This instantaneous organization is weaker for cells at positions $101$ and $186$  which are very far apart, see  \cref{fig:allcorr_allcycles_gr0_101_186}  and stronger for the cells at positions $152$ and $153$ and the ones at position $103$ and $119$ which are neighbours, see 
\cref{fig:allcorr_allcycles,fig:allcorr_allcycles1} (of course the strongest instantaneous organization occurs when one consider the correlation of the cell at position $152$ with itself, see \cref{fig:allcorr_allcycles_gr0_152_152}). 

Moreover, in the cases when the instantaneous organization is stronger, we also see a second spike. In all cases the spike occurs after $20$ seconds. This might indicate a response of one cell to the other. In particular, the cell at position $153$ reacts to a signal sent by the cell at position $152$, see \cref{fig:allcorr_allcycles}. Similarly for the cells at positions $103$ and $119$, see \cref{fig:allcorr_allcycles1}. Observe that the pairs of cells considered are not neighbouring, see \cref{fig:positions15215,fig:positions103119}, nevertheless the response time is the same. 

Surprisingly, at the same time of $20$ second there is a spike in the correlation graph of the cell at position $152$ with itself, see \cref{fig:allcorr_allcycles_gr0_152_152}. Of course one cannot talk about cell response in this case, maybe one should start to consider the possibility of a local rhythm of frequency $1/20$ Hz which is visible when the initial instantaneous organization is stronger. 

One might consider whether the global organization of the whole cell colony has a rhythm of frequency $1/20$ Hz. All these needs of course further investigation.

\end{example}

 \section{Robustness of the model construction}

The aim of this section is to investigate the robustness of the construction of the model. A fundamental requirement for robustness is that different datasets, with the same characteristics, obtained from the same physical process, should lead to essentially the same model. Also in this case, the theory of hyperbolic systems serves as a guideline for assessing robustness.

First observe that, in our setting, the characteristics of the data are: the time $T$, the interval of time $\Delta T$, the dimension of the instantaneous observations $D$, the dimension of the projected data $d$ and the number of grids $K$.

Since different choices of data may lead to different Markov chains, a criterion is needed to determine when the resulting models should be regarded as equivalent. Motivated by the classification theory of Markov chains, we compare models through the entropy. The Markov chains arising in our construction are measure-theoretically isomorphic if and only if they have the same entropy, see \cite{Ornstein1970, FriedmanOrnstein1970}. We therefore say that two models are essentially the same if their entropies differ by at most a prescribed threshold.

\begin{description}
\item[Hyperbolicity robustness consequence.] 
Given a hyperbolic system with physical measure $\mu^*$ and entropy $h^*$. Then for every $\delta>0$, there are data characteristics such that a typical data set with this characteristics creates a model $\left(\Sigma_{k^*},\mu_{k^*}, \sigma_{k^*}\right)$ whose entropy $h_{k^*}$ satisfies,
\[ |h_{k^*}-h^*|< \delta.
\]  

\item[Validation robustness criterion.] 
Let $X_1,\dots, X_N$ be data sets obtained from the same physical process and let $h_{k^*_1},\dots, h_{k^*_N}$ be the entropies of the corresponding models. Let $\delta_{\scriptscriptstyle R}$ be the smallest number such that the following holds.
\[ |h_{k^*_i}-h_{k^*_j}| < \delta_{\scriptscriptstyle R}, \text{ for all }i\neq j.
\] 
\end{description}
If the Validation robustness criterion is satisfied for sufficiently small $\delta_{\scriptscriptstyle R}$, then the models obtained from different data sets are essentially the same and, in the case under consideration, the construction of the model is robust. 
Moreover, robustness of the construction of the method would be another confirmation that the obtained Markov chains models are reasonable descriptions of the physical process. 

We discuss now the robustness of the use case, i.e. the Cell Signalling Process. In the previous examples we gave the results obtained for data taken by the first $6$ hour movie of the total $24$ hours we were given. We denote this data set by $X_{0-6,1000}$. In this case, $T=4318$, $\Delta T=5$, $D=512\times 512$, $d=256$ and $K=1000$. The results are given in the previous sections. 
To check robustness we also build a model based on the following data sets. First, we consider the next $6$ hours of the initial $24$ hour movie we were given. We denote this data set by $X_{6-12,1000}$. These two data sets have exactly the  same characteristics.  The results are presented in \cref{sec:next6hour}.

Next we return to the initial $6$ hour movie and we consider more projected time series, namely we run experiments for $K=1500$ and $K=5000$. The corresponding data sets are denoted by $X_{0-6,1500}$ and $X_{0-6,5000}$. The other characteristic are kept the same. The results are discussed in \cref{sec:longerdata}. 

Below is a list of characteristics which will be systematically presented and discussed in each model constructed. 
\begin{description}
\item[Data Characteristics:]
 $\Delta T, T, D, K, d $.

\item[Model Characteristics:]
Size of $X$, size of $A$, Distribution of Cycle Periods, Validation of Criteria I and II (\cref{fig:sgraph}), $\delta_{\scriptscriptstyle \mathrm{II}}$, Individual entropy profile for each cycle (\cref{fig:entropies_eachcycle}), The aggregate entropy profile when considering all cycles (\cref{fig:entropiesallgoodcycles_a}), $S$, $\delta_{\scriptscriptstyle \mathrm{III}}$,  $\delta^{\scriptscriptstyle 2}_{\scriptscriptstyle \mathrm{IV}}$, $\delta^{\scriptscriptstyle 3}_{\scriptscriptstyle \mathrm{IV}}$. 

\item[Robustness Characteristics:]
$\delta_{\scriptscriptstyle R}$.
\end{description}

\subsection{Different data sets}\label{sec:next6hour}
We collect here the results derived from the same movie using the data corresponding to the second block of 6 hours. Notice, the data obtained from the second block of 6 hours can be considered as obtained from a second movie of the cell colony, a movie independent from the movie obtained during the first 6 hours. 

 \begin{table}[h!]
\centering
\caption{Summary of model characteristics over two datasets: $X_{0-6, 1000}$ and $X_{6-12, 1000}$.}
\label{tab:summary_datasets}
\resizebox{\textwidth}{!}{%
\begin{tabular}{lrrcccclll}
\toprule
\multicolumn{1}{c}{Dataset} & \multicolumn{1}{c}{$|X|$} & \multicolumn{1}{c}{$|A|$} & Cycles & \multicolumn{1}{c}{Periods} & $\delta_{\mathrm{II}}$ & $S$ & $\delta_{\mathrm{III}}$  & \multicolumn{1}{c}{Entropy}  & \multicolumn{1}{c}{Entropy Profile}  \\
\midrule
$X_{0-6,1000}$ & $4\,318\,000$  & $69\,846$ & 30 
& \cref{tab:period_orbits} & $0.10$ & $[25,36]$ 
& $0.42$ 
&2.36
   & \cref{fig:sgraph}, \cref{fig:entropies_gr0_r1000} 
     \\
$X_{6-12,1000}$ & $4\,318\,000$  & $139\,835$  & 63 
& \cref{tab:period_orbits_gr1} & $0.19$ & $[29,47]$ 
& $0.91$ 
&2.51
    & \cref{fig:sgraph_gr1}, \cref{fig:entropies_gr1} 
   \\
\bottomrule
\end{tabular}%
}
\end{table}

 \subsection{Different number of projected time series}\label{sec:longerdata}
 We add data by taking more grids, $K=1500$ and $K=5000$.
\begin{table}[h!]
\centering
\caption{Summary of model characteristics across grid densities for $X_{0-6}$.}
\label{tab:summary_r}
\resizebox{\textwidth}{!}{%
\begin{tabular}{lrrcccclll}
\toprule
\multicolumn{1}{c}{Dataset} & \multicolumn{1}{c}{$|X|$} & \multicolumn{1}{c}{$|A|$} & Cycles & \multicolumn{1}{c}{Periods} & $\delta_{\mathrm{II}}$ & $S$ & $\delta_{\mathrm{III}}$  & \multicolumn{1}{c}{Entropy} & \multicolumn{1}{c}{Entropy Profile}  \\
\midrule
$X_{0-6,1000}$ & $4\,318\,000$  & $69\,846$ & 30 
& \cref{tab:period_orbits} 
& $0.10$ & $[25,36]$ & $0.42$ & 2.36
    & \cref{fig:sgraph}, \cref{fig:entropies_gr0_r1000} 
     \\
$X_{0-6,1500}$ & $6\,480\,000$  & $84\,138$  & 37 
 & \cref{tab:period_orbits_gr0_1500} 
& $0.11$ & $[27,38]$ & $0.49$ & 2.45
    & \cref{fig:sgraph_gr0_1500}, \cref{fig:entropies_gr0_r1500} 
    \\
$X_{0-6,5000}$ & $21\,590\,000$ & $146\,087$ & 64 
& \cref{tab:period_orbits_gr0_5000} 
& $0.13$ & $[32,43]$ & $0.58$ & 2.59
    & \cref{fig:sgraph_gr0_5000}, \cref{fig:entropies_gr0_r5000} 
     \\
\bottomrule
\end{tabular}%
}
\end{table}

\subsection{Model reliability}
Of course we can not claim that the Cell signalling process is equivalent to the obtained Markov chain models. The least we can ask is that different data sets result in comparable models. We use robustness as reliability criterium. 

We have four data sets obtained from the Cell Signalling Process. The corresponding robustness characteristics is $$\delta_R=0.2361.$$ The specific questions of the study of the cell signalling process have to determine whether this is small enough. 

We finish with some remarks about the obtained models.
\begin{itemize}
\item[1)] Given the complexity of the Cell Signalling Process it is remarkable how consistent the results are, e.g. the entropies are essentially the same,  the aggregate entropy profiles are very similar. Even the individual cycles share similar entropy profiles. 
\item[2)] Observe the the periods of the cycle are all, except for three cycles, between 2000 and 2500. One might have expected that in the periods coming from the consecutive data sets $X_{0-6,1000}$, $X_{0-6,1500}$, and $X_{0-6,5000}$ would have been increasing. The tables of periods show that that did not happen. In particular, the cycles obtained from $X_{0-6,5000}$ are not longer than the cycles from $X_{0-6,1000}$ and are not able to describe information on smaller scale.
This is probably related to the observation that the bundling widths $\delta_{\mathrm{II}}$ and  $\delta_{\mathrm{III}}$ did not decrease neither. If one needs to tighten the bundling constants one might need to refine the grid, see \cref{Fig2}.
\item[3)] In \cref{fig:entropies_eachcycle_gr1}, the entropy profiles of the cycles obtained form $X_{6-12,1000}$, shows clearly two outlying cycles, see also \cref{fig:sgraph_gr1}. One might consider to eliminate these outlying cycles. Observe, that these two cycles might be responsible for the large $\delta_{\mathrm{III}}$  of $X_{6-12,1000}$.
\item[4)] There are cells which changed location from one during the first 6 hours to another in the second 6 hours. Careful studies should incorporate this motion of cells. Observe that all models based on the data from the first 6 hours do show the correlation spike after 20 seconds.
\end{itemize}


\newpage
\FloatBarrier
\clearpage

\appendix
\section*{Appendix I}

The figures and tables corresponding to the different sets of data are collected in Appendix I.

 \begin{table}[!ht]
\centering
\caption{Distribution of Cycle Periods for $X_{6-12,1000}$}
\label{tab:period_orbits_gr1}
\begin{tabular}{cl}
\toprule
\textbf{\# Orbits} & \textbf{Period} \\
\midrule
\addlinespace
3 & 2162, 2175 \\
\addlinespace
2 & 2163, 2169, 2172, 2174, 2176, 2178, 2181, 2182, 2185, 2188, \\
  & 2190, 2191, 2201, 2202, 2208, 2278 \\
\addlinespace
1 & 2170, 2173, 2177, 2183, 2187, 2189, 2196, 2198, 2227, 2233, \\
  & 2237, 2243, 2250, 2258, 2259, 2268, 2269, 2282, 2287, 2337, \\
  & 2354, 2377, 2413, 2431, 2550 \\
\bottomrule
\end{tabular}
\end{table}
     \begin{figure}[!ht]
  \centering
\includegraphics[width=0.6\linewidth]{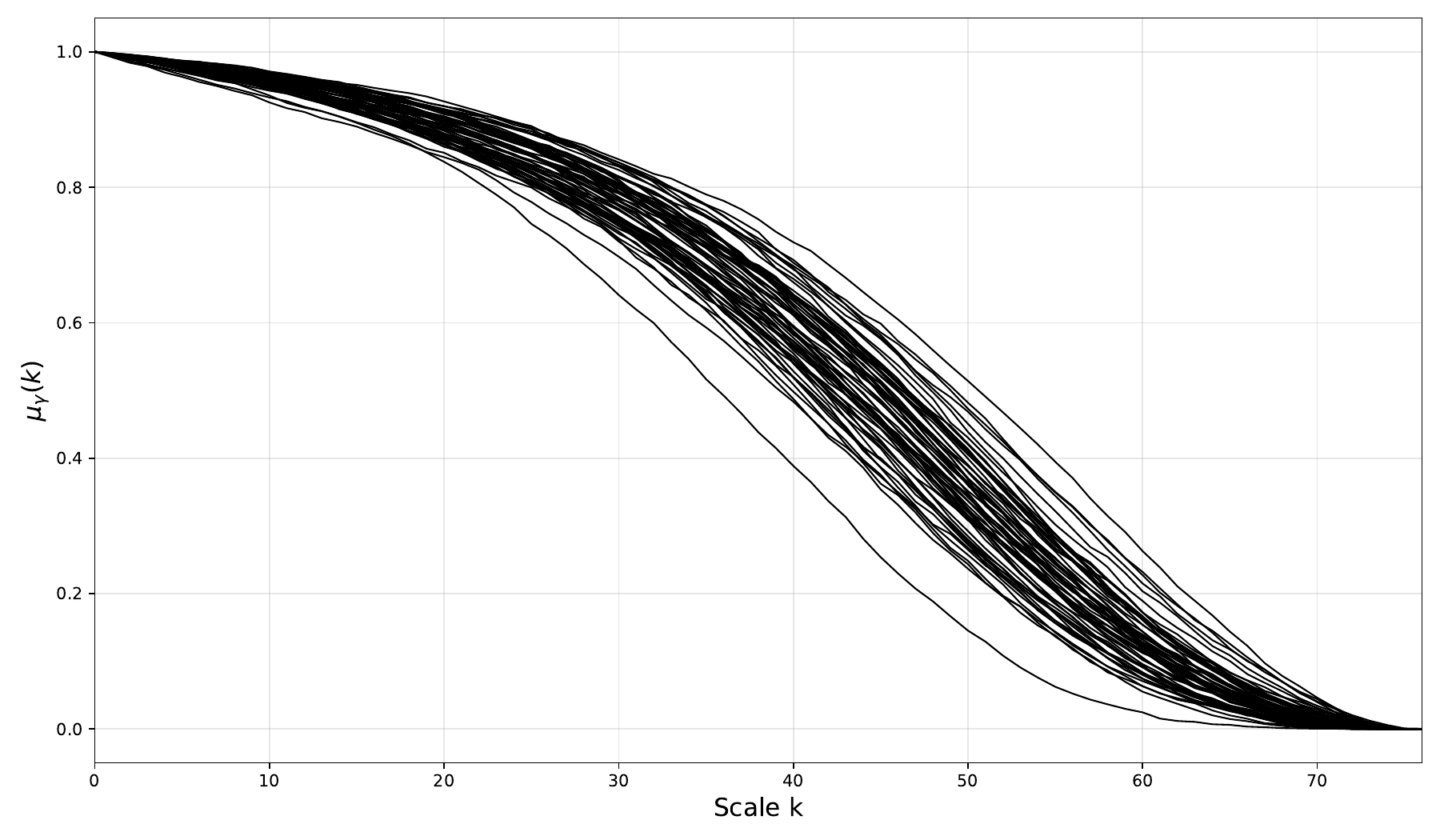}
  \caption{Validation of Criteria I and II for $X_{6-12,1000}$}
  \label{fig:sgraph_gr1}
\end{figure}
   
 \begin{figure}[!ht]
  \centering
  \begin{subfigure}[t]{0.48\linewidth}
    \centering
    \includegraphics[width=\linewidth]{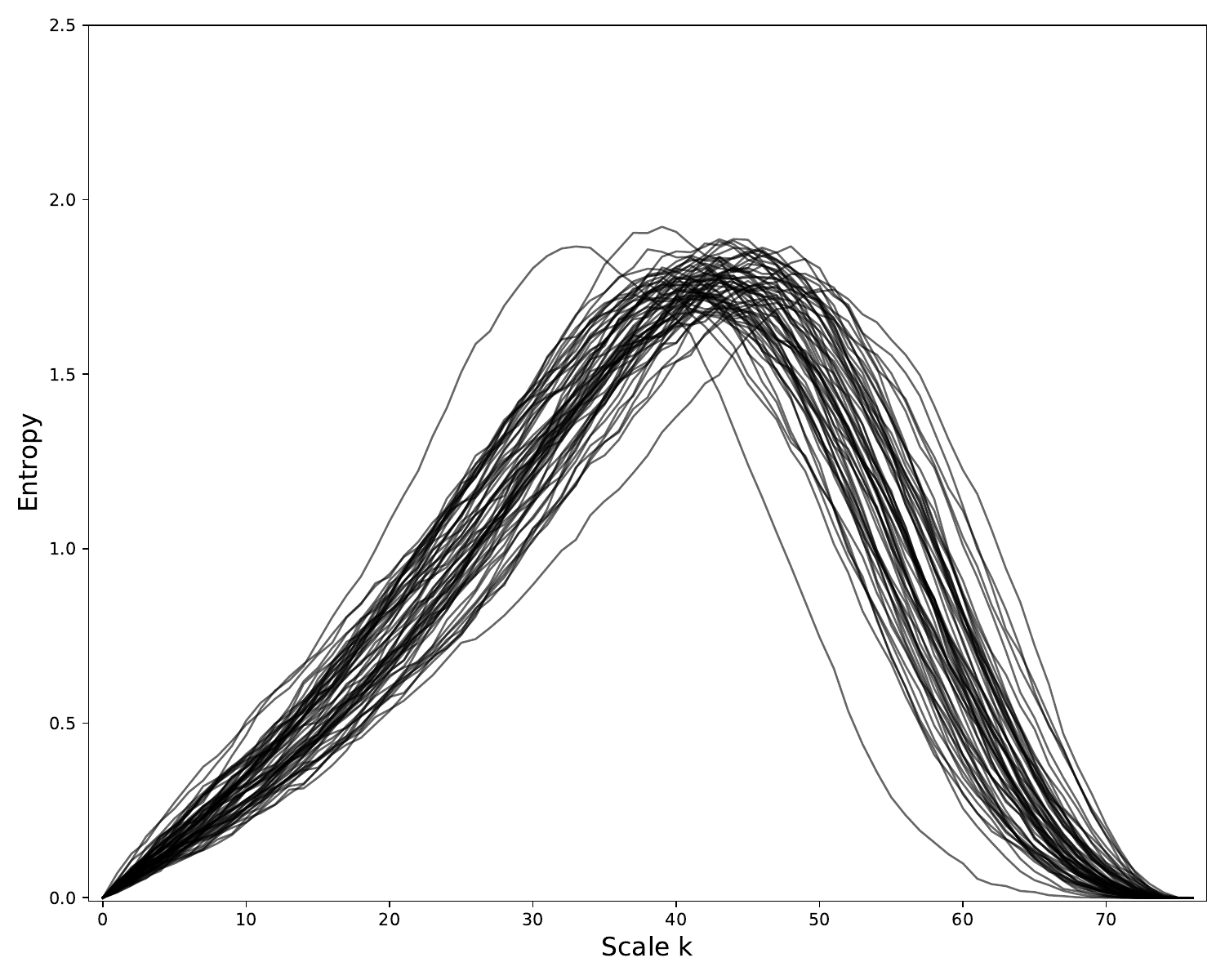}
    \caption{Individual entropy profile for each cycle.}
    \label{fig:entropies_eachcycle_gr1}
  \end{subfigure}
  \hfill
  \begin{subfigure}[t]{0.48\linewidth}
    \centering
    \includegraphics[width=\linewidth]{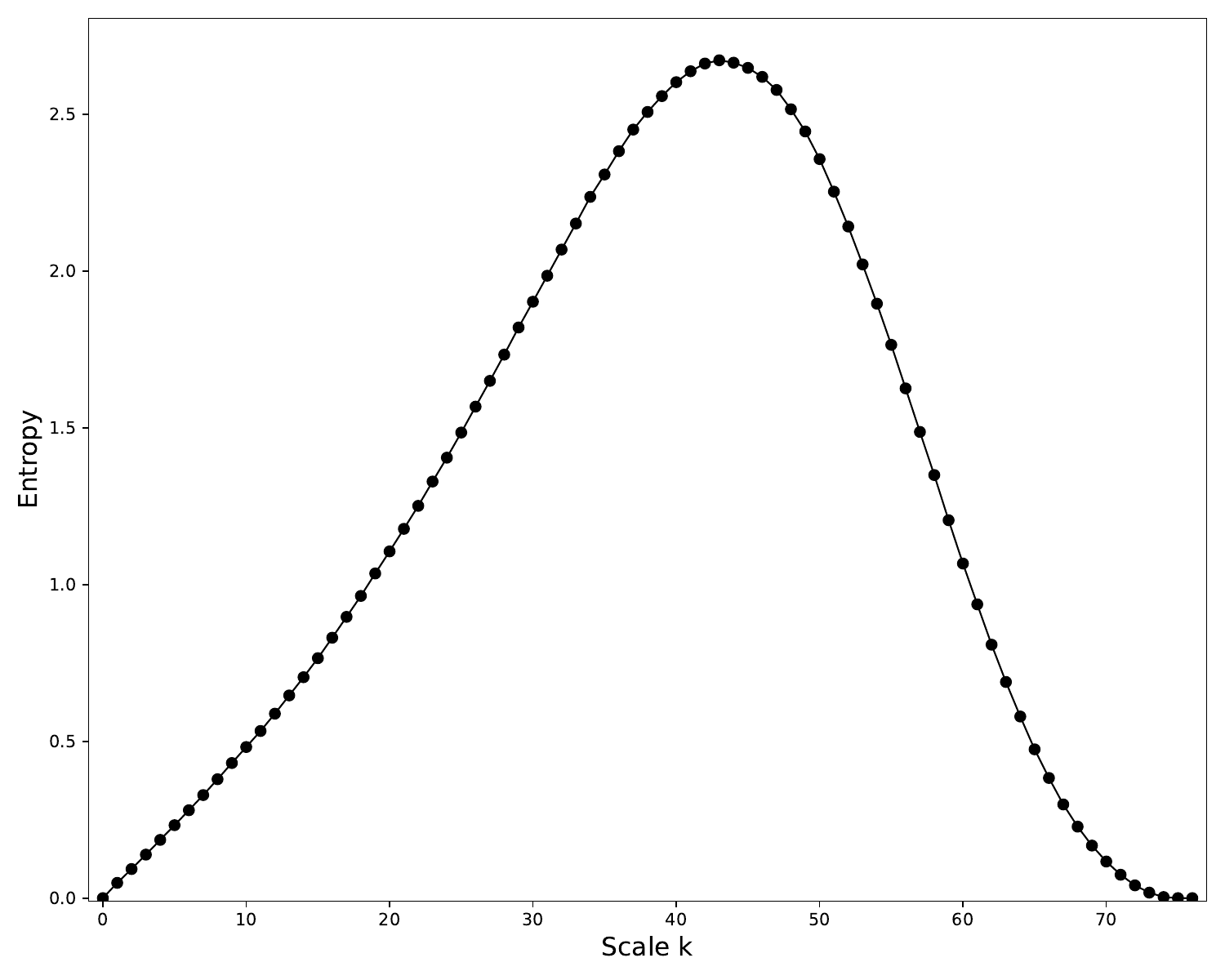}
    \caption{Aggregate entropy profile for all cycles in $\Gamma$.}
    \label{fig:entropiesallgoodcycles_a_gr1}
  \end{subfigure}
  \caption{Entropy profiles for $X_{6-12,1000}$.}
  \label{fig:entropies_gr1}
\end{figure}

\FloatBarrier
\clearpage

\begin{table}\centering\caption{Distribution of Cycle Periods for $X_{0-6,1500}$.}\label{tab:period_orbits_gr0_1500}
\begin{tabular}{cl}
\toprule
\textbf{\# Orbits} & \textbf{Period} \\
\midrule
2 & 2173, 2175, 2197, 2240 \\
\addlinespace
1 & 2162, 2163, 2165, 2166, 2168, 2171, 2172, 2174, 2178, \\
  & 2182, 2183, 2188, 2191, 2198, 2203, 2207, 2213, 2216, \\
  & 2222, 2236, 2246, 2252, 2257, 2263, 2274, 2275, 2362, \\
  & 2436, 4445 \\
\bottomrule
\end{tabular}
\end{table}

     \begin{figure}
  \centering
\includegraphics[width=0.6\linewidth]{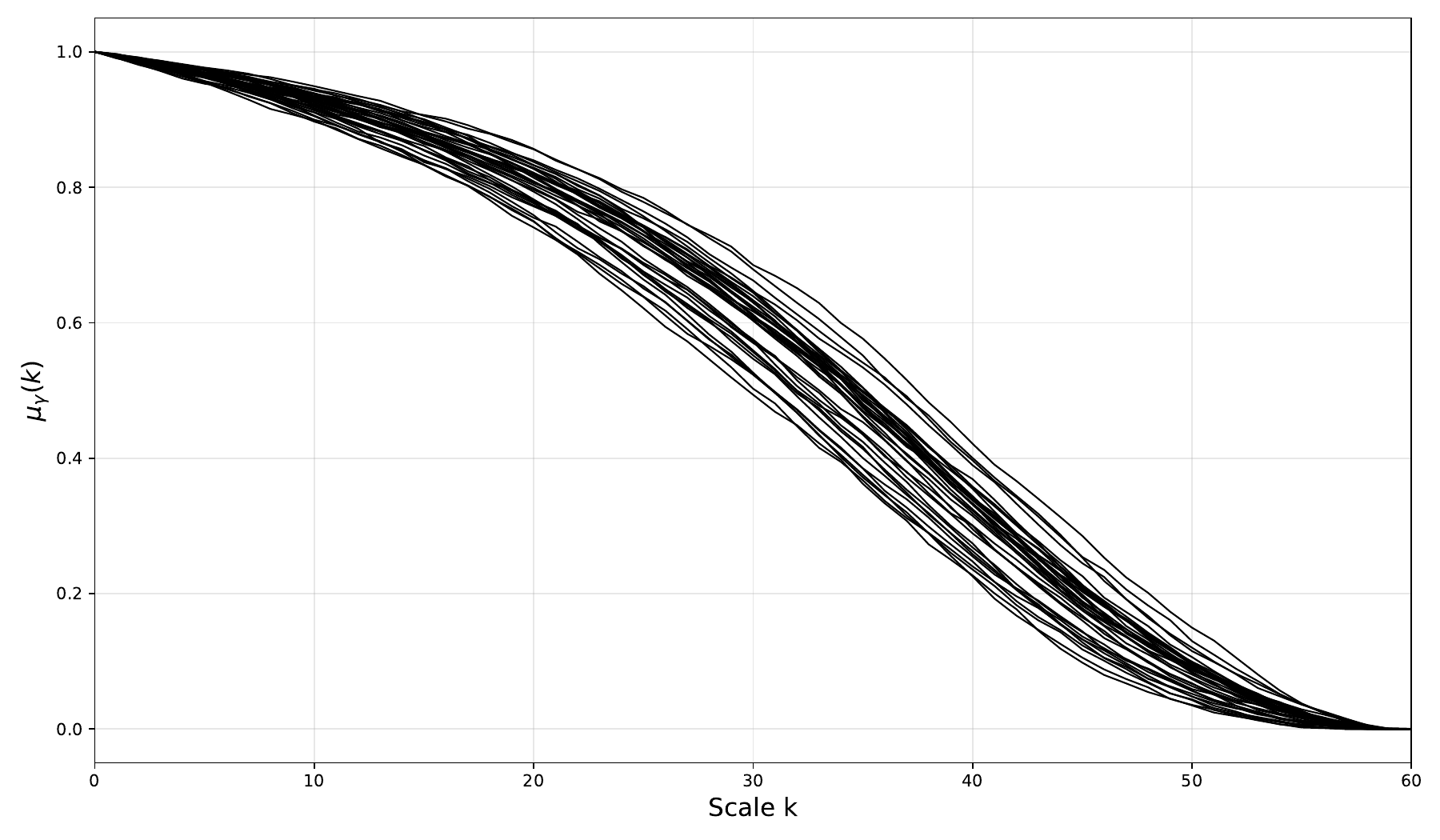}
  \caption{Validation of Criteria I and II for $X_{0-6,1500}$}
\label{fig:sgraph_gr0_1500}
\end{figure}

   \begin{figure}
  \centering
  \begin{subfigure}[t]{0.48\linewidth}
    \centering
    \includegraphics[width=\linewidth]{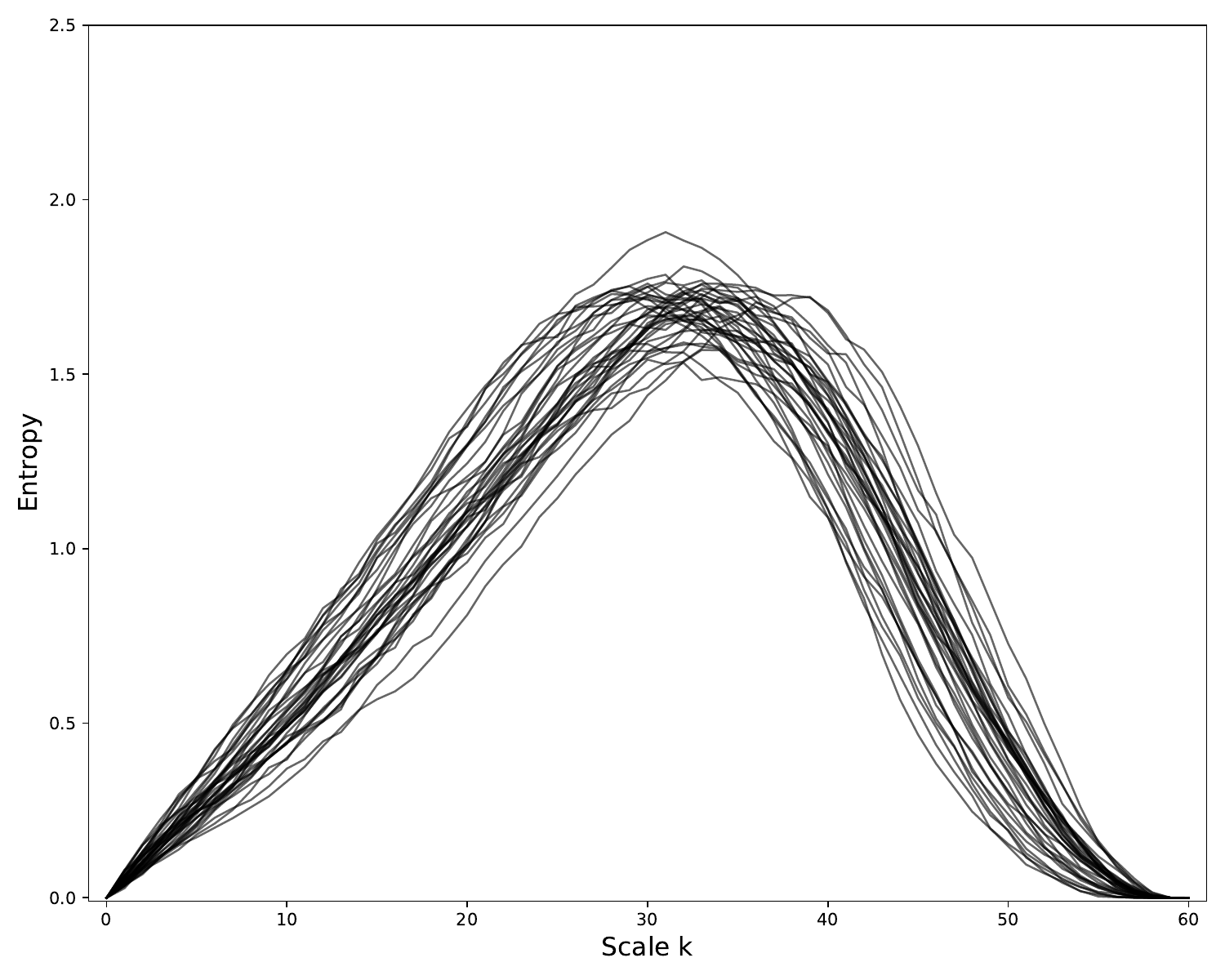}
    \caption{Individual entropy profile for each cycle.}
    \label{fig:entropies_eachcycle_gr0_1500}
  \end{subfigure}
  \hfill
  \begin{subfigure}[t]{0.48\linewidth}
    \centering
    \includegraphics[width=\linewidth]{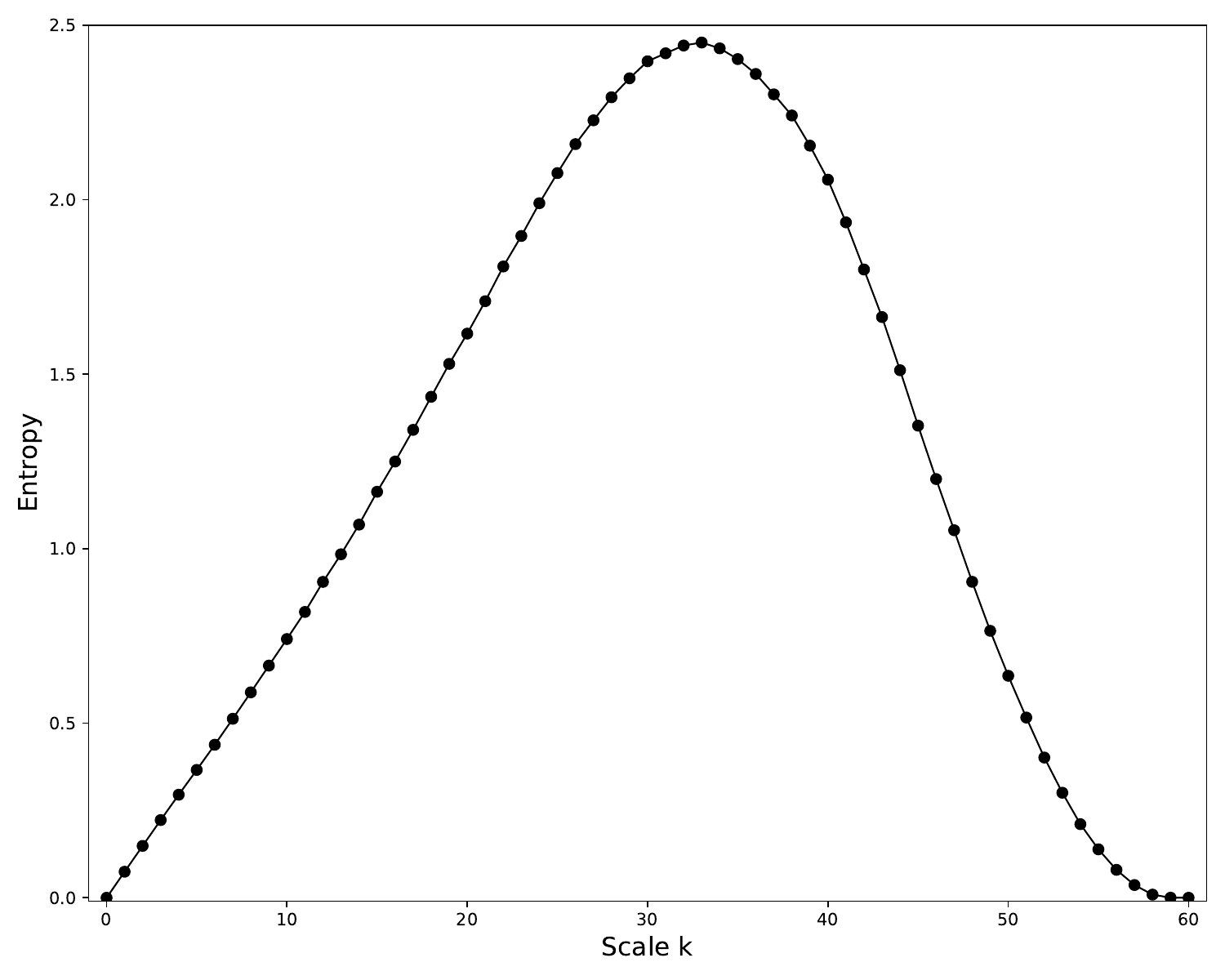}
    \caption{Aggregate entropy profile for all cycles in $\Gamma$.}
    \label{fig:entropiesallgoodcycles_a_gr0_1500}
  \end{subfigure}
  \caption{Entropy profiles for $X_{0-6,1500}$.}
  \label{fig:entropies_gr0_r1500}
\end{figure}

 \begin{figure}
\centering
\begin{subfigure}{0.24\linewidth}
    \centering
    \includegraphics[width=\linewidth]{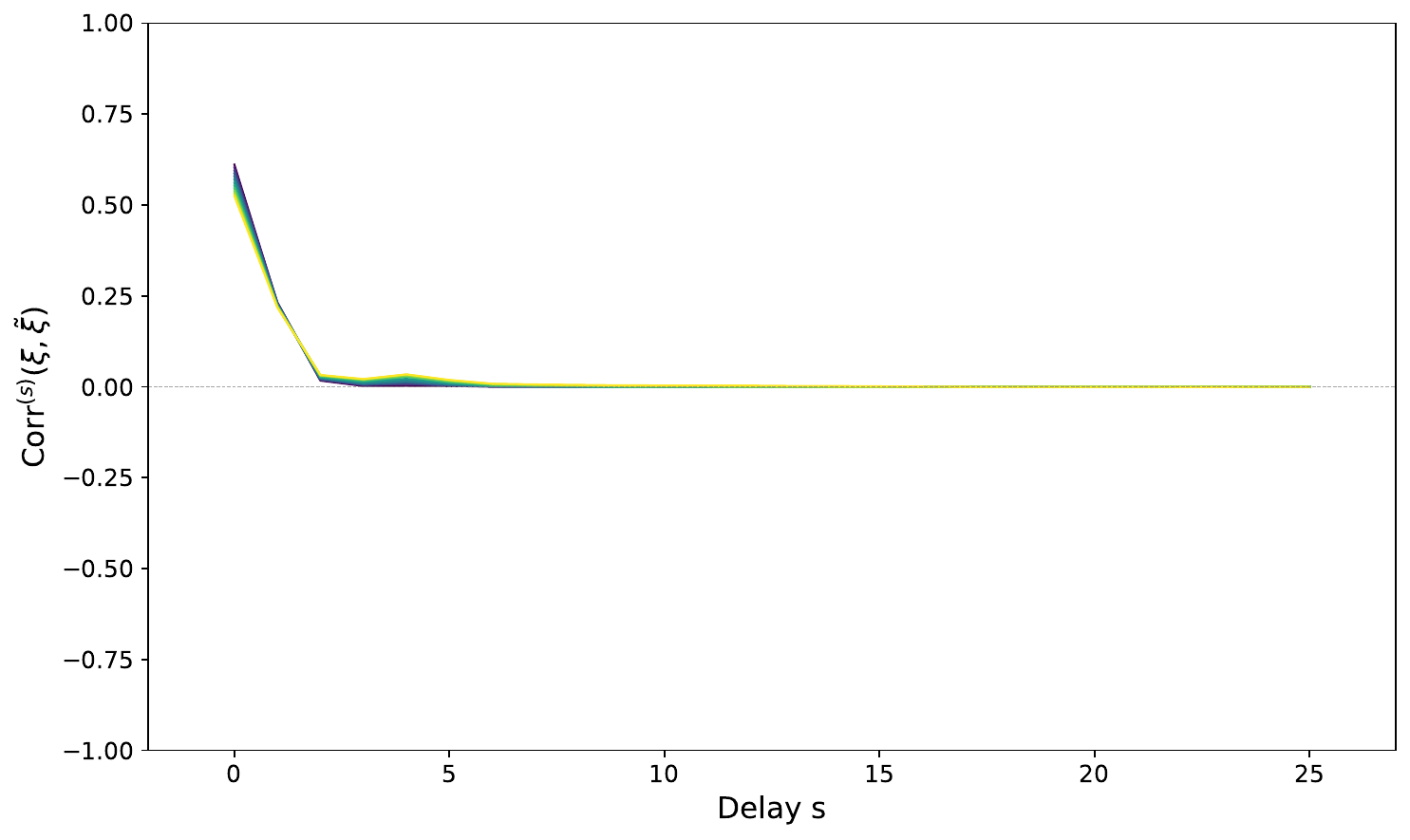}
    \caption{Position 152 and 153.}
    \label{fig:allcorr_allcycles_gr0_152_153_1500}
\end{subfigure}
\hfill
\begin{subfigure}{0.24\linewidth}
    \centering
    \includegraphics[width=\linewidth]{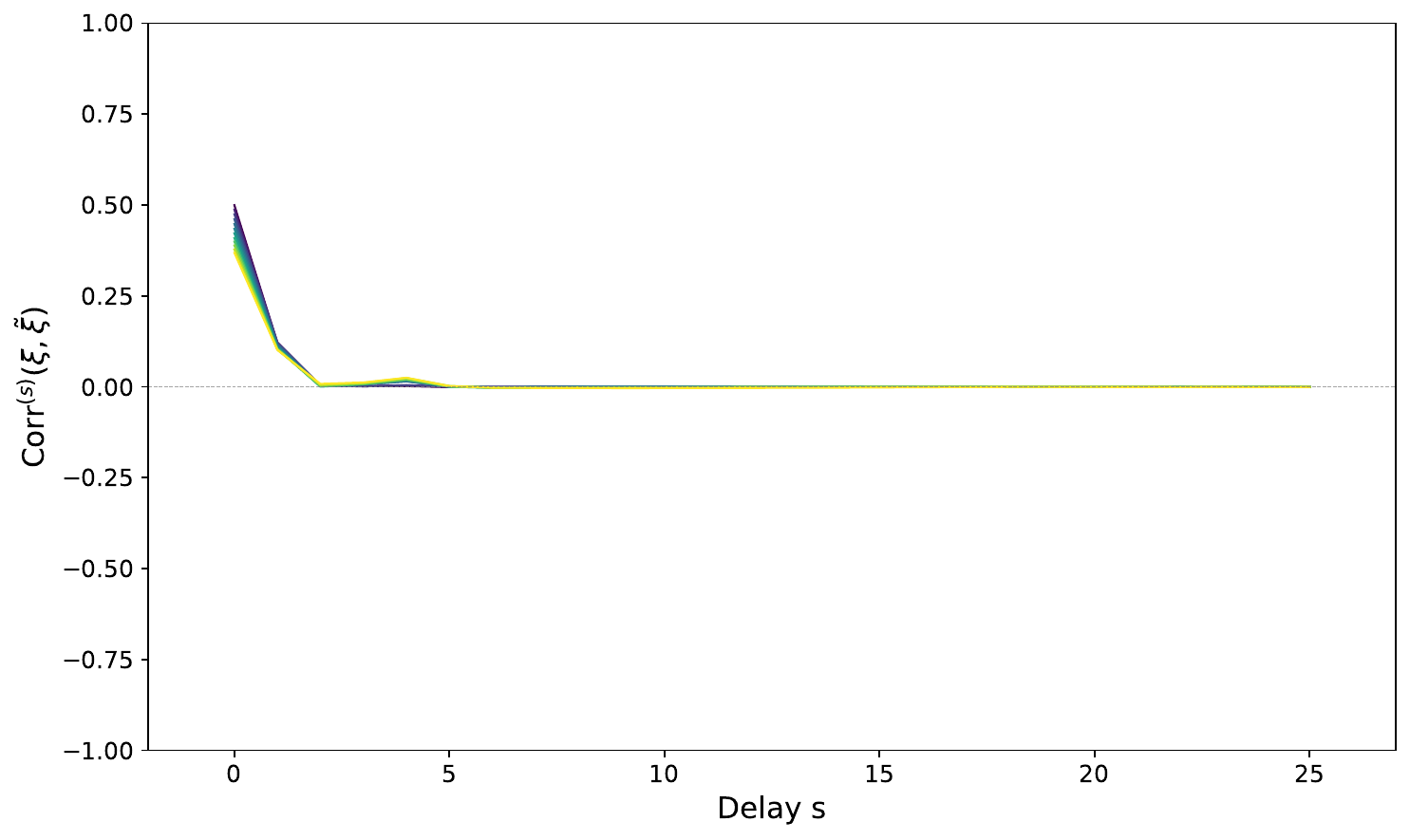}
    \caption{Position 103 and 119.}
    \label{fig:allcorr_allcycles_gr0_103_119_1500}
\end{subfigure}
\hfill
\begin{subfigure}{0.24\linewidth}
    \centering
    \includegraphics[width=\linewidth]{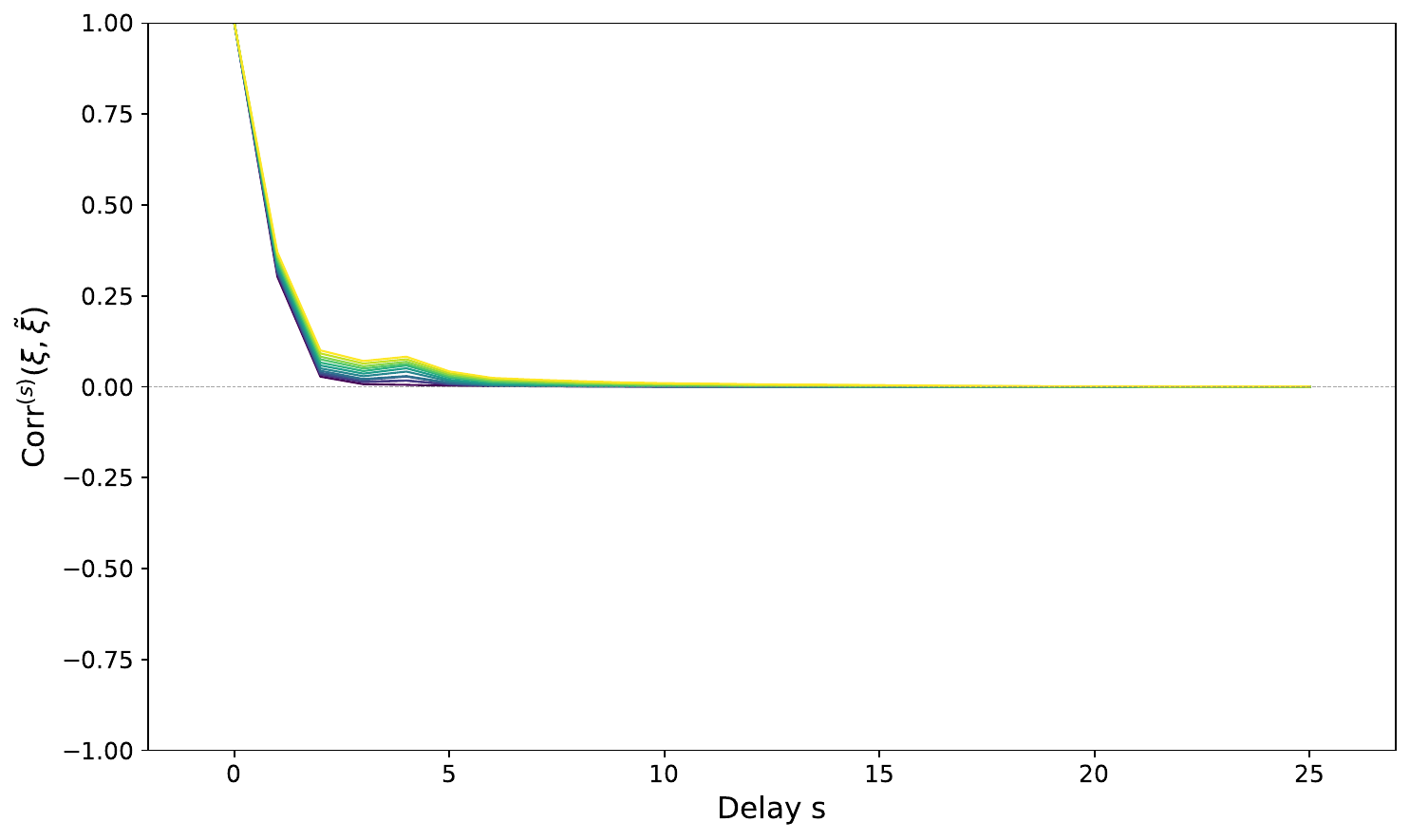}
    \caption{Position 152 and 152.}
    \label{fig:allcorr_allcycles_gr0_152_152_1500}
\end{subfigure}
\hfill
\begin{subfigure}{0.24\linewidth}
    \centering
    \includegraphics[width=\linewidth]{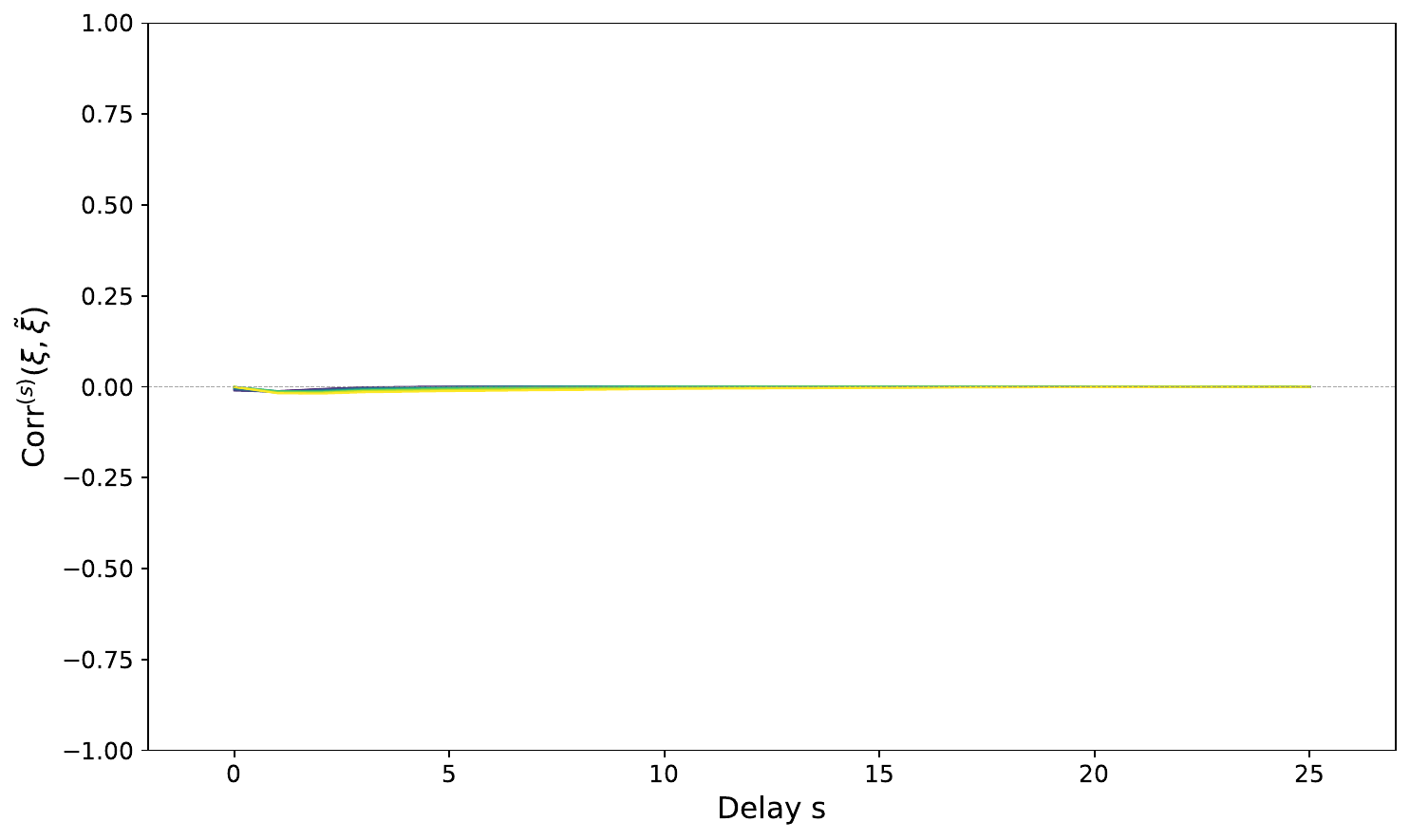}
    \caption{Position 101 and 186.}
    \label{fig:allcorr_allcycles_gr0_101_186_1500}
\end{subfigure}
\hfill
\begin{subfigure}{0.20\linewidth}
    \centering
    \raisebox{0.8cm}{\includegraphics[width=\linewidth]{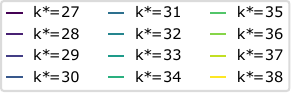}}
\end{subfigure}
\caption{Delayed correlation profiles for three representative pairs of cells for $X_{0-6,1500}$, shown for all $k^*\in S$.}
\label{fig:allcorr_1500_subfigures}
\end{figure}

\FloatBarrier
\clearpage

\begin{table}[]
\centering
\caption{Distribution of Cycle Periods over $X_{0-6,5000}$.}
\label{tab:period_orbits_gr0_5000}
\begin{tabular}{cl}
\toprule
\textbf{\# Orbits} & \textbf{Period} \\
\midrule
3 & 2175, 2183, 2197, 2198 \\
\addlinespace
2 & 2164, 2174, 2178, 2240 \\
\addlinespace
1 & 2162, 2163, 2165, 2166, 2168, 2170, 2171, 2173, 2179, \\
  & 2181, 2182, 2186, 2188, 2191, 2193, 2194, 2200, 2206, \\
  & 2207, 2208, 2211, 2213, 2216, 2217, 2219, 2226, 2231, \\
  & 2233, 2242, 2244, 2245, 2246, 2252, 2257, 2258, 2262, \\
  & 2263, 2279, 2296, 2323, 2356, 2362, 2469, 6543 \\
\bottomrule
\end{tabular}
\end{table}

 \begin{figure}
  \centering
\includegraphics[width=0.6\linewidth]{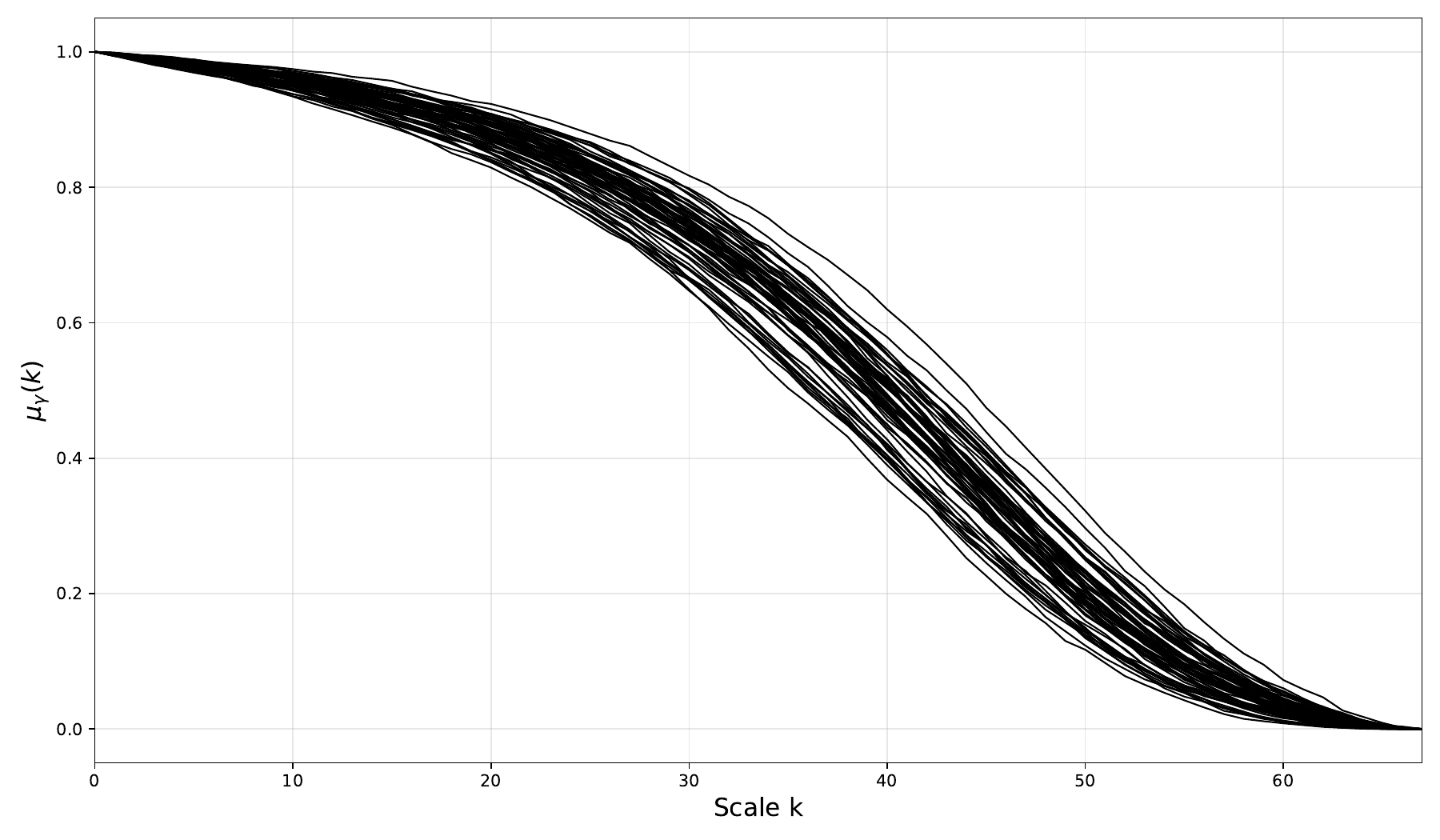}
  \caption{Validation of Criteria I and II for $X_{0-6,5000}$}
\label{fig:sgraph_gr0_5000}
\end{figure}

\begin{figure}
  \centering
  \begin{subfigure}[t]{0.48\linewidth}
    \centering
    \includegraphics[width=\linewidth]{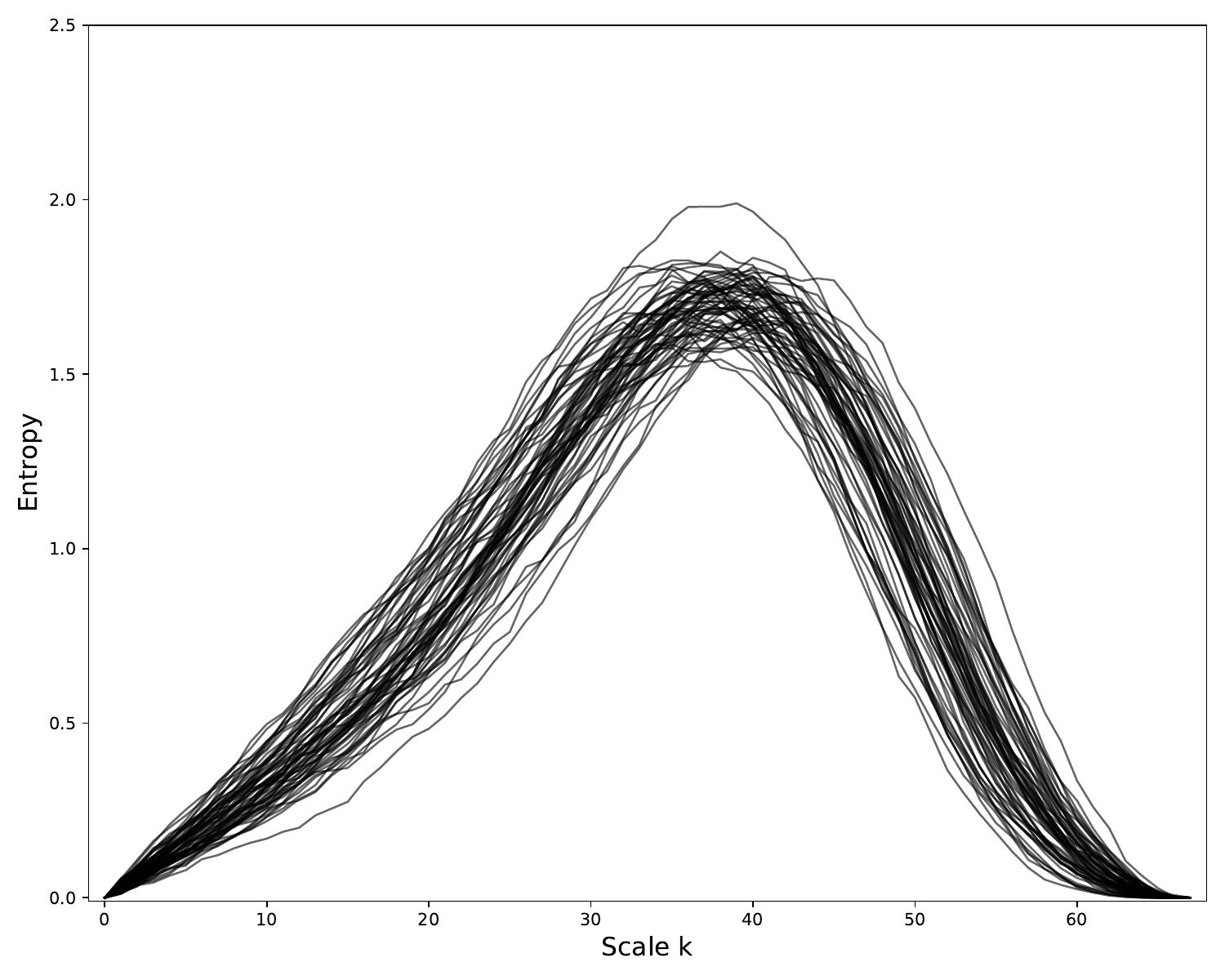}
    \caption{Individual entropy profile for each cycle.}
    \label{fig:entropies_eachcycle_gr0_5000}
  \end{subfigure}
  \hfill
  \begin{subfigure}[t]{0.48\linewidth}
    \centering
    \includegraphics[width=\linewidth]{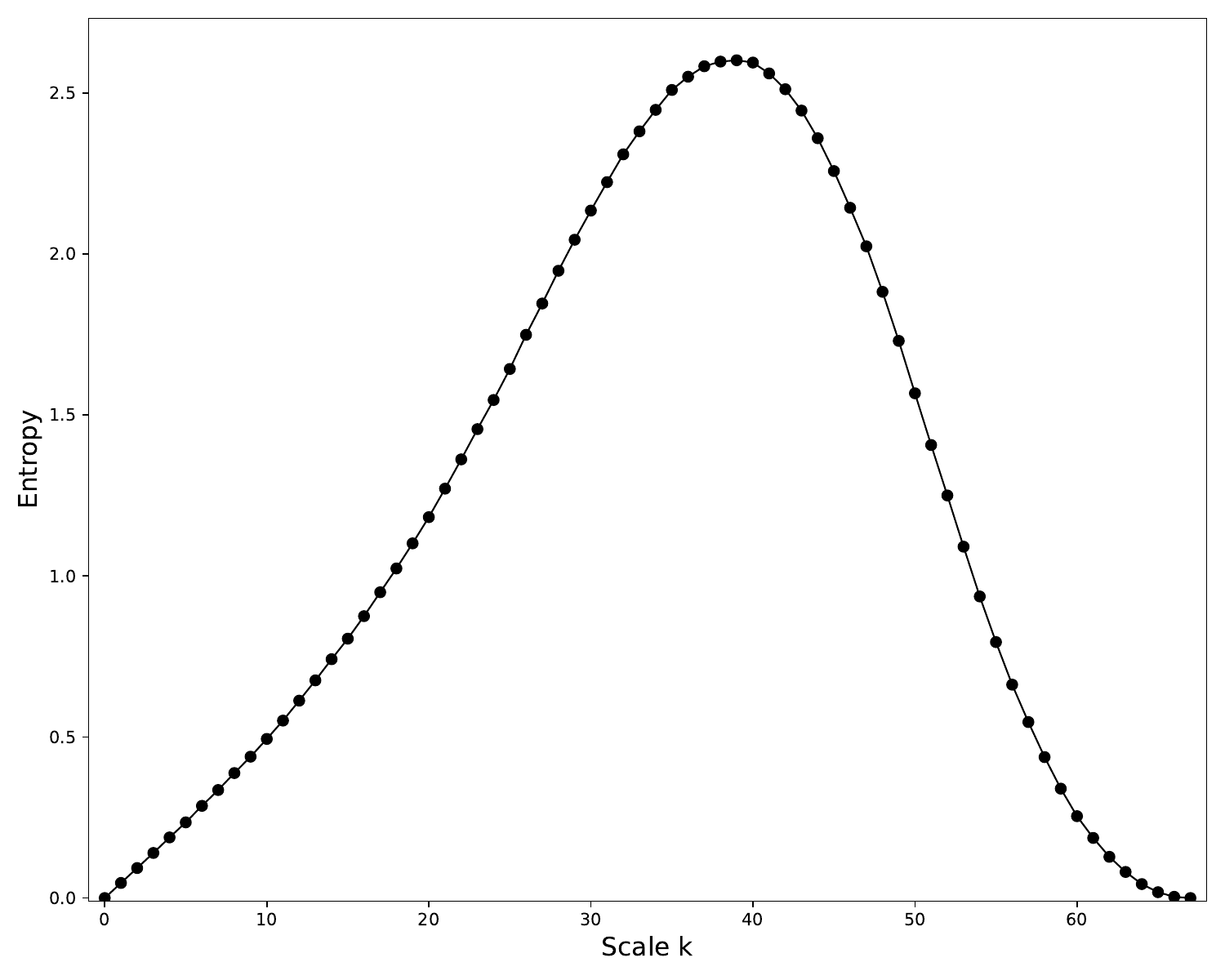}
    \caption{Aggregate entropy profile for all cycles in $\Gamma$.}
    \label{fig:entropiesallgoodcycles_a_gr0_5000}
  \end{subfigure}
  \caption{Entropy profiles for $X_{0-6,5000}$.}
  \label{fig:entropies_gr0_r5000}
\end{figure}


\begin{figure}[ht]
\centering
\begin{subfigure}{0.24\linewidth}
    \centering
    \includegraphics[width=\linewidth]{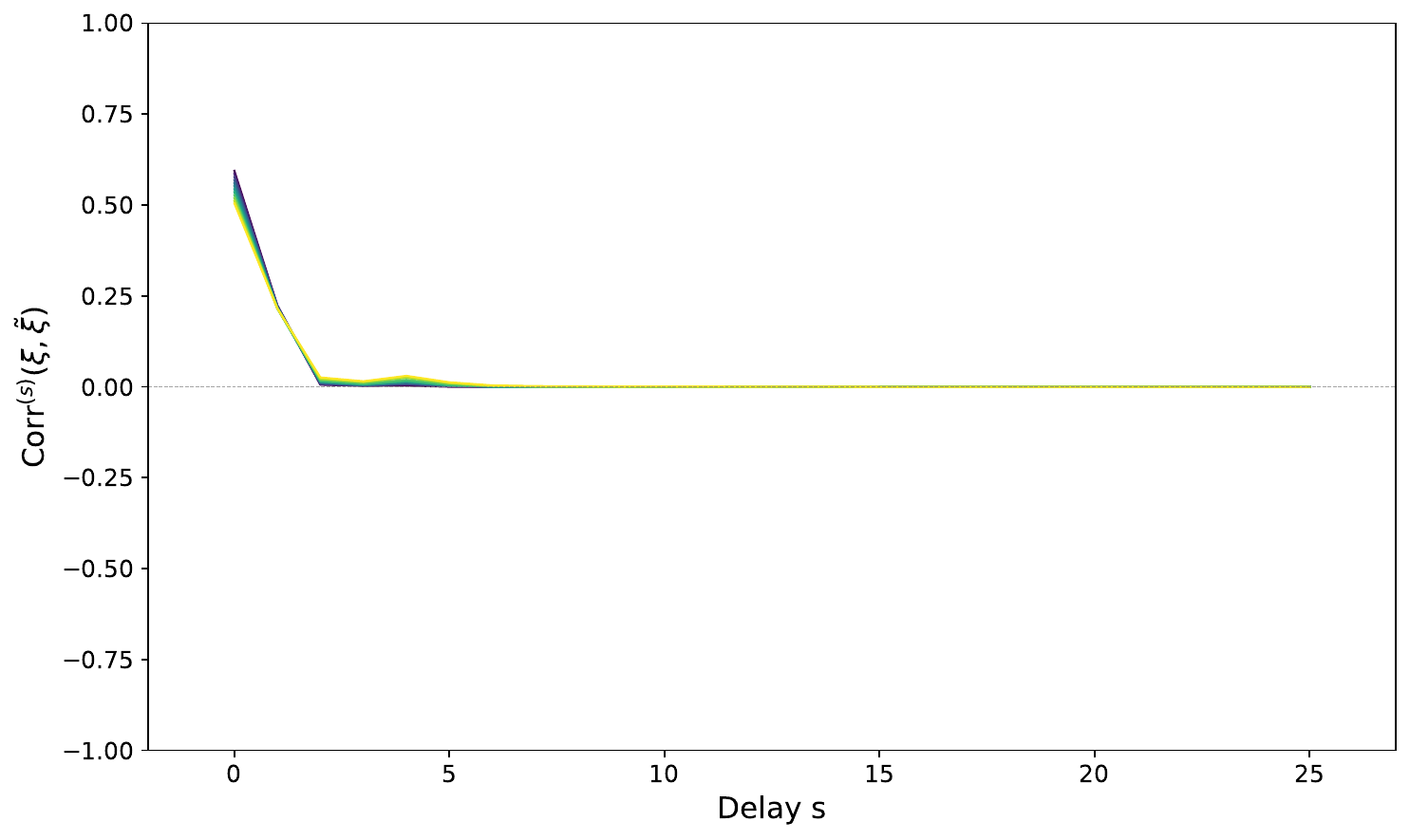}
    \caption{Position 152 and 153.}
    \label{fig:allcorr_allcycles_gr0_152_153_5000}
\end{subfigure}
\hfill
\begin{subfigure}{0.24\linewidth}
    \centering
    \includegraphics[width=\linewidth]{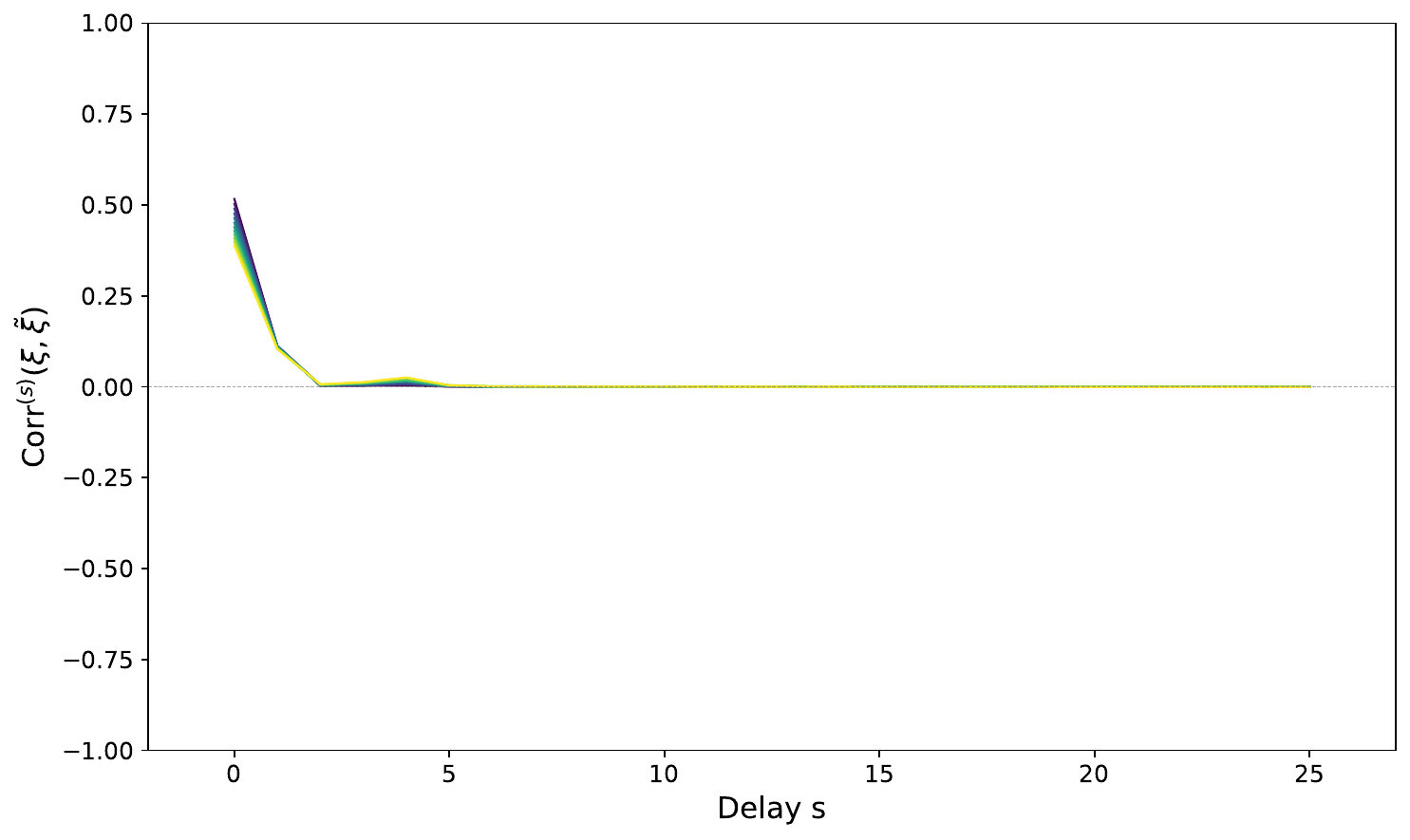}
    \caption{Position 103 and 119.}
    \label{fig:allcorr_allcycles_gr0_103_119_5000}
\end{subfigure}
\hfill
\begin{subfigure}{0.24\linewidth}
    \centering
    \includegraphics[width=\linewidth]{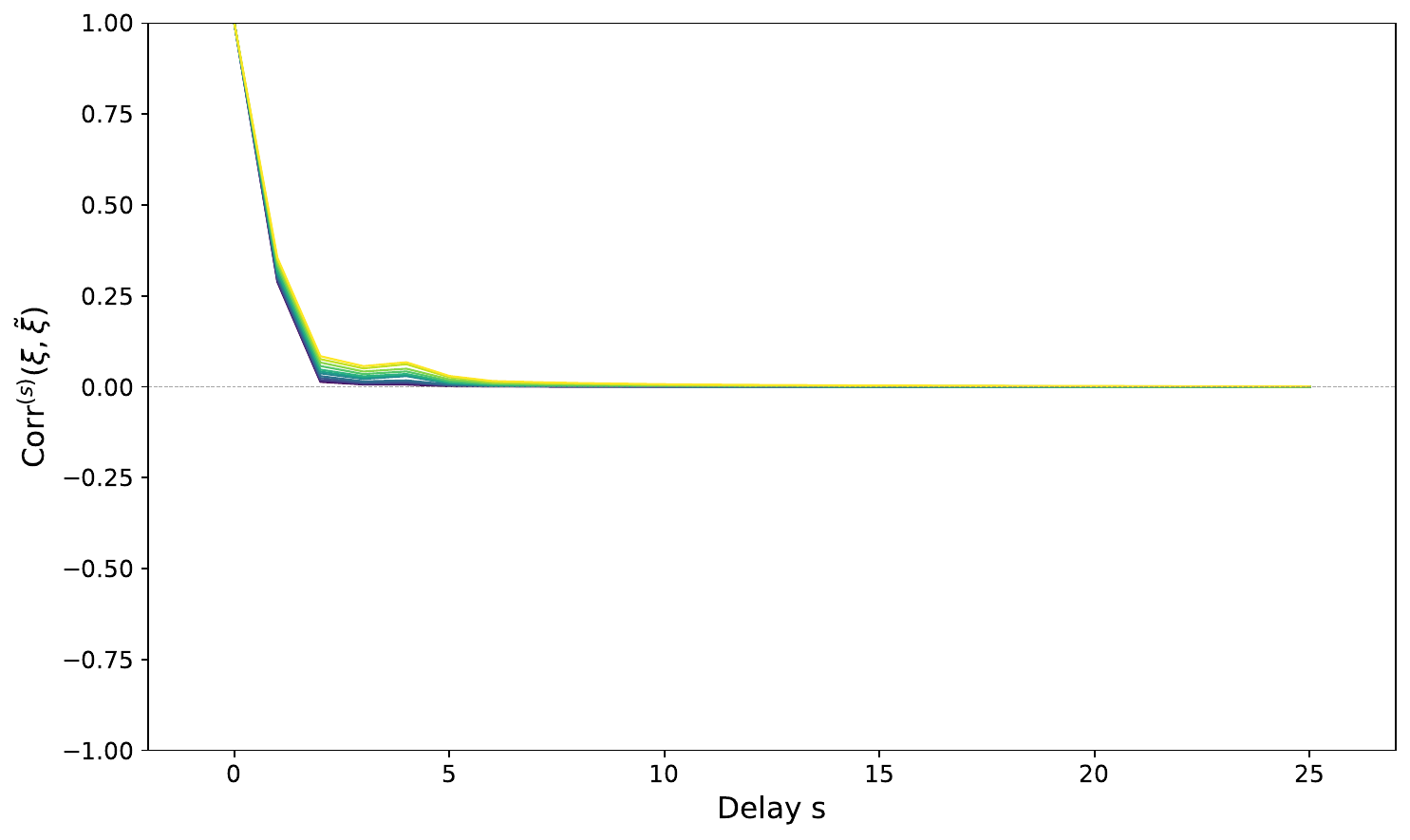}
    \caption{Position 152 and 152.}
    \label{fig:allcorr_allcycles_gr0_152_152_5000}
\end{subfigure}
\hfill
\begin{subfigure}{0.24\linewidth}
    \centering
    \includegraphics[width=\linewidth]{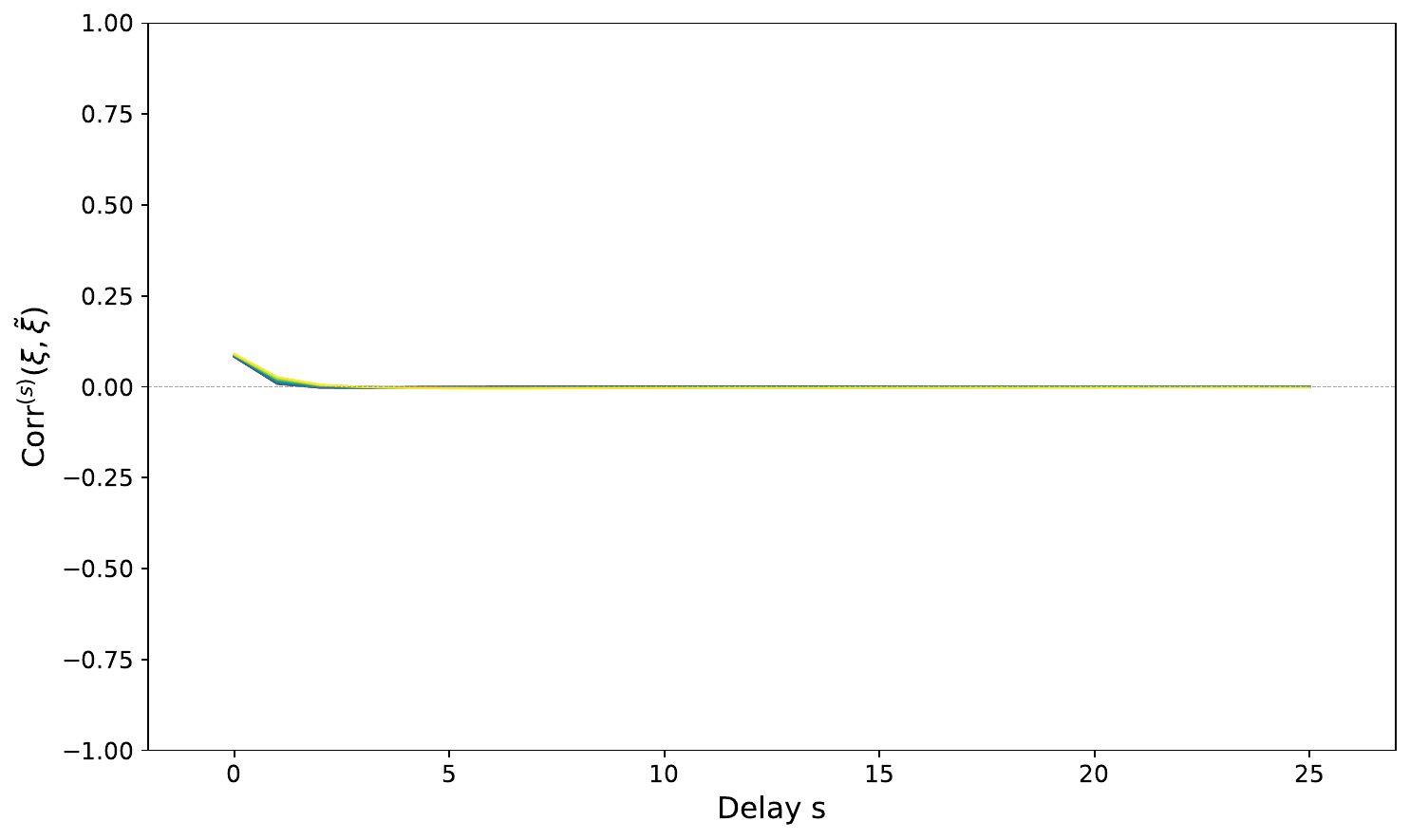}
    \caption{Position 101 and 186.}
    \label{fig:allcorr_allcycles_gr0_101_186_5000}
\end{subfigure}
\hfill
\begin{subfigure}{0.2\linewidth}
    \centering
    \raisebox{0.8cm}{\includegraphics[width=\linewidth]{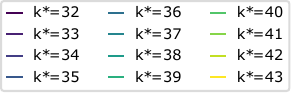}}
\end{subfigure}
\caption{Delayed correlation profiles for three representative pairs of cells for $X_{0-6,5000}$, shown for all $k^*\in S$.}
\label{fig:allcorr_5000_subfigures}
\end{figure}

\FloatBarrier
\clearpage
\appendix
\section*{Appendix II}
In this appendix we provide sketches of proof for various consequences that comes from hyperbolicity, see \cite{BrinGarrett, KatokHasselblatt, Mane, PalisTakens, Petersen} for more details on this topic.

\subsection*{Hyperbolicity Consequence~I} 
For every hyperbolic system there exists a sequence of nested renormalization boxes $U_k$ with $0\leq k< m$ such that the system is equivalent to the corresponding sub-shift $\Sigma_k$ for $k$ large enough. Moreover, given a finite selection of renormalization boxes $U_k$ with $0\leq k< m$, there is a collection of cycles which intersects all renormalization boxes $U_k$ with $k<m$.
\begin{proof}[Idea of the Proof]
Choose any sequence of nested Markov boxes. The corresponding sub-shift of finite type will be equivalent to the whole system.
\end{proof}

\subsection*{Hyperbolicity Consequence~II} 
A hyperbolic system with a physical measure $\mu^*$ has the property that for every $\delta>0$ and for any finite selection of renormalization boxes $U_k$ with $0\leq k\leq m-1$, there exists a non empty finite collection of cycles $\Gamma^*=\{\gamma\}$ such that 
\[
\bigl|\mu_{\gamma}(U_k)-\mu^*(U_k)\bigr| < \delta
\quad\text{holds for every $\gamma\in\Gamma^*$ and $0\leq k\leq m-1$.}
\]

\begin{proof}[Idea of the Proof]
Every invariant measure of a sub-shift of finite type can be approximated by the invariant measure supported on a periodic orbit. 
\end{proof}

\subsection*{Hyperbolicity Consequence~III} 
Given a hyperbolic system with physical measure $\mu^*$ which has entropy $h^*$. Then for every $\delta>0$ there exists a non empty finite collection of cycles $\Gamma^*=\{\gamma\}$ such that for every $\gamma\in\Gamma^*$, the corresponding Markov chains $\bigl(\Sigma_k(\gamma),\mu_k(\gamma), \sigma_k(\gamma)\bigr)$, $k=0,\dots,m$, have entropy function $h_{\gamma}(k):=h_k(\gamma)$ satisfying the following:
\begin{itemize}
\item[-] $h_{\gamma}(0)=0$ and $h_{\gamma}(m)=0$,
\item[-] there exists an interval $K_{\gamma} := \bigl[k(0),k(1)\bigr]$ such that $h_{\gamma}$ is increasing for all $k\leq k(0)$ and it is decreasing for all $k\geq k(1)$,
\item[-] $\bigl|h_{\gamma}(k)-h^*\bigr|<\delta$, for all $k\in K_{\gamma}$,
\item[-] $\bigcap_{\gamma\in\Gamma^*}K_{\gamma}\neq\emptyset$.
\end{itemize}  

\begin{proof}[Idea of the Proof]
Choose a Markov partition. Then use the renormalization scheme to create a nested sequence of finer and finer Markov partitions $\mathcal M_k$. The corresponding sub-shift of finite type are described by the graphs $G_k$. In particular, they have the same structure as the ones used in our method, except that they have countably many loops. Because of hyperbolicity, the diameter of the Markov boxes in these consecutive Markov partitions shrink to zero. 

For every Markov partition the physical measure assigns a weight to each of its Markov boxes. These weights define a Markov measure on the corresponding sub-shift of finite type, $\left(\Sigma_k,\mu_k, \sigma_k\right)$ with entropy $h(k)$, $k=0,1,\dots$. The entropy of these Markov chains converge to $h^*$, i.e.
\[
\lim_{k\to\infty}h(k)=h^*.
\]
Choose a $k(0)$ such that $\left|h(k(0))-h^*\right|<\delta/2$.

Choose a collection $\Gamma=\left\{\gamma\right\}$ such that the cycle measure $\mu_{\gamma}$ of each $\gamma\in\Gamma$, assigns approximately the same weight to the Markov boxes of the Markov partition $k(0)$ as the physical measure does. Hence, the cycle $\gamma$ defines a Markov measure on $G_{k(0)}$ whose entropy is approximately $h_{k(0)}$. In particular,
\[ 
  \left|h_{\gamma}(k(0))-h^*\right|<\delta
  \quad\text{for each $\gamma\in\Gamma$.}
\]
This motivates that point three and four hold. 

Next, observe that, by construction, for $k=0$, the graph $G_0$ has only one vertex and one return branch. Hence $h_{\gamma}(0)=0$. Moreover, given a $\gamma\in\Gamma$ for $k=m$ large enough, the return box $U_m$ contains only one point of $\gamma$ and the cycle measure of $\gamma$ is supported on only one loop of the graph $G_m$. Hence $h_{\gamma}(m)=0$. This motivates the first point. 

To give an idea of the proof of the second point, we have to consider three different regimes. The first one is when  $k< k(0)$. Observe that the cycle measure $\mu_{\gamma}$ assigns approximately the same weight to the Markov boxes of each Markov partition $\mathcal M_k$ with $k\leq k(1)$ for a certain $k(1)>k(0)$. The cycle measure also defines a Markov measure on each $G_k$, $\mu_k(\gamma)$. Observe that $\mu_k(\gamma)(U_k)$ is monotonically decreasing and the number of loops in $G_k$ is increasing for $k<k(0)$. By increasing $k$ the measure moves more and more weight from the root $U_k$ into the loops of the graph $G_k$ while $\mu_k(\gamma)$ restricted to the Markov boxes of $\mathcal{M}_k$ is still close to the induced Markov measure $\mu_k$ coming from the original physical measure. Hence, the entropy increases. 

In the intermediate regime, the entropy $h_k$ for $k(0)\leq k\leq k(1)$ is approximately $h^*$ the entropy of the physical measure. Because the Markov measures induced by the cycle measure and by the physical measure is approximately the same. 

In the third and last regime, for $k$ large, the cycle does not have enough points to approximate well the Markov measure induced by the initial physical measure. By increasing $k$ the support of the $\mu_k(\gamma)$ is a monotonically decreasing collection of loops of the graph $G_k$. In particular, the entropy will be decreasing. 
\end{proof}

\subsection*{Hyperbolicity Consequence~IV} 
Given a hyperbolic system with physical measure $\mu^*$. Then for every $\delta>0$ and $p\in\Natural$ there exists a non empty finite collection of cycles $\Gamma^*=\left\{{\gamma}\right\}$ such that 
\[ |\mu_{\gamma}-\mu^*|_p < \delta
\quad\text{for every $\gamma\in\Gamma^*$.}
\] 

\begin{proof}[Idea of the Proof]
The use of the seminorm to describe the quality of the approximation implies that we only need to approximate the measure on a finite cover of state space. Choose a cover that is fine enough consisting of Markov boxes. This cover defines a sub-shift of finite type. A general fact about a sub-shifts of finite type is that every invariant measure restricted to the algebra generated by the cover of Markov boxes can be arbitrarily precisely approximated by cycle measures. 
\end{proof}

\subsection*{Hyperbolicity robustness consequence} 
Given a hyperbolic system with physical measure $\mu^*$ and entropy $h^*$. Then for every $\delta>0$, there are data characteristics such that a typical data set with this characteristics creates a model $\left(\Sigma_{k^*},\mu_{k^*}, \sigma_{k^*}\right)$ whose entropy $h_{k^*}$ satisfies,
\[ |h_{k^*}-h^*|< \delta.
\] 
 
\begin{proof}[Idea of the Proof]
Take a small enough Markov box. The corresponding renormalization, i.e. the first return map, creates a dynamical partition with the corresponding renormalization graph $G$. The physical measure $\mu^*$ induces a stationary Markov measure on $G$. In particular, the entropy of  $\mu^*$ is arbitrarily close to the entropy of this induced stationary Markov measure. For every $\delta$, there exists a periodic orbit $\gamma$ whose invariant measure $\mu_{\gamma}$ restricted to the pieces of the dynamical partition is arbitrarily close to the corresponding restriction of the  measure $\mu^*$. In particular $\mu_{\gamma}$ and $\mu^*$ induce arbitrarily close Markov measures in $G$. Hence,  the corresponding entropies are approximately the same and close to $h^*$. If typical data consists of typical orbits that are long enough, then the induced measure on the dynamical partition will be arbitrarily close to the restriction of the measure $\mu_{\gamma}$ and hence the corresponding model will still have entropy close to $h^*$. 
\end{proof}
\section*{Acknowledgments} 
M.M. was partially supported by Knut and Alice Wallenberg Foundation, L.P. was partially supported by the VR grant 67578, Renormalization in dynamics, A.P. and O.Ö. acknowledges support from the Wallenberg Autonomous Systems and Software Program (WASP), funded by the Knut and Alice Wallenberg Foundation. All simulations were performed on the Dardel HPC system. The computations were enabled by resources provided by the National Academic Infrastructure for Supercomputing in Sweden (NAISS). 

\FloatBarrier
\clearpage
\printbibliography

@inproceedings{KacprzykVanDerSchaar2025,
  author    = {Kacprzyk, Krzysztof and van der Schaar, Mihaela},
  title     = {No Equations Needed: Learning System Dynamics Without
               Relying on Closed-Form {ODE}s},
  booktitle = {Proceedings of the 13th International Conference on
               Learning Representations (ICLR 2025)},
  year      = {2025},
  doi       = {10.48550/arXiv.2501.18563},
  note      = {\url{https://openreview.net/forum?id=kbm6tsICar}}
}

@book{KaczynskiMischaikowMrozek2004,
  author    = {Kaczynski, Tomasz and Mischaikow, Konstantin and Mrozek, Marian},
  title     = {Computational Homology},
  series    = {Applied Mathematical Sciences},
  volume    = {157},
  publisher = {Springer, New York},
  year      = {2004},
  doi       = {10.1007/b97315}
}

@misc{GameiroGelbMischaikow2025,
  author = {Gameiro, Marcio and Gelb, Brittany and Mischaikow, Konstantin},
  title  = {Rigorously Characterizing Dynamics with Machine Learning},
  year   = {2025},
  note   = {Preprint, arXiv:2505.17302},
  doi    = {10.48550/arXiv.2505.17302}
}

@article{WilliamsKevrekidisRowley2015,
  author  = {Williams, Matthew O. and Kevrekidis, Ioannis G. and Rowley, Clarence W.},
  title   = {A Data-Driven Approximation of the {K}oopman Operator:
             Extending Dynamic Mode Decomposition},
  journal = {Journal of Nonlinear Science},
  volume  = {25},
  number  = {6},
  pages   = {1307--1346},
  year    = {2015},
  doi     = {10.1007/s00332-015-9258-5}
}

@article{Gilpin2024,
  author  = {Gilpin, William},
  title   = {Generative Learning for Nonlinear Dynamics},
  journal = {Nature Reviews Physics},
  volume  = {6},
  number  = {3},
  pages   = {194--206},
  year    = {2024},
  doi     = {10.1038/s42254-024-00688-2}
}

@inproceedings{Schiff2024,
  author    = {Schiff, Yair and Wan, Zhong Yi and Parker, Jeffrey B. and
               Hoyer, Stephan and Kuleshov, Volodymyr and Sha, Fei and
               Zepeda-N{\'u}{\~n}ez, Leonardo},
  title     = {{D}y{SLIM}: Dynamics Stable Learning by Invariant Measure
               for Chaotic Systems},
  booktitle = {Proceedings of the 41st International Conference on
               Machine Learning (ICML 2024)},
  year      = {2024},
  doi       = {10.48550/arXiv.2402.04467}
}

@article{BotvinickGreenhouse2023,
  author  = {Botvinick-Greenhouse, Jonah and Martin, Robert and Yang, Yunan},
  title   = {Learning Dynamics on Invariant Measures Using
             {PDE}-Constrained Optimization},
  journal = {Chaos: An Interdisciplinary Journal of Nonlinear Science},
  volume  = {33},
  number  = {6},
  pages   = {063152},
  year    = {2023},
  doi     = {10.1063/5.0149673}
}

@article{OttoRowley2019,
  author  = {Otto, Samuel E. and Rowley, Clarence W.},
  title   = {Linearly Recurrent Autoencoder Networks for Learning Dynamics},
  journal = {SIAM Journal on Applied Dynamical Systems},
  volume  = {18},
  number  = {1},
  pages   = {558--593},
  year    = {2019},
  doi     = {10.1137/18M1177846}
}

@article{Lusch2018,
  author  = {Lusch, Bethany and Kutz, J. Nathan and Brunton, Steven L.},
  title   = {Deep Learning for Universal Linear Embeddings of Nonlinear Dynamics},
  journal = {Nature Communications},
  volume  = {9},
  pages   = {4950},
  year    = {2018},
  doi     = {10.1038/s41467-018-07210-0}
}

@article{MischaikowActa2002,
  author  = {Mischaikow, Konstantin},
  title   = {Topological Techniques for Efficient Rigorous Computation
             in Dynamics},
  journal = {Acta Numerica},
  volume  = {11},
  pages   = {435--477},
  year    = {2002},
  doi     = {10.1017/S0962492902000065}
}

@incollection{DellnitzJunge2002,
  author    = {Dellnitz, Michael and Junge, Oliver},
  title     = {Set Oriented Numerical Methods for Dynamical Systems},
  booktitle = {Handbook of Dynamical Systems},
  editor    = {Fiedler, Bernold},
  volume    = {2},
  pages     = {221--264},
  publisher = {North-Holland, Amsterdam},
  year      = {2002},
  doi       = {10.1016/S1874-575X(02)80026-1}
}

@incollection{MischaikowMrozek2002,
  author    = {Mischaikow, Konstantin and Mrozek, Marian},
  title     = {Conley Index},
  booktitle = {Handbook of Dynamical Systems},
  editor    = {Fiedler, Bernold},
  volume    = {2},
  pages     = {393--460},
  publisher = {North-Holland, Amsterdam},
  year      = {2002},
  doi       = {10.1016/S1874-575X(02)80030-3}
}

@article{Batko2020,
  author  = {Batko, Bogdan and Mischaikow, Konstantin and Mrozek, Marian and
             Przybylski, Mateusz},
  title   = {Conley Index Approach to Sampled Dynamics},
  journal = {SIAM Journal on Applied Dynamical Systems},
  volume  = {19},
  number  = {1},
  pages   = {665--704},
  year    = {2020},
  doi     = {10.1137/19M1254404}
}

@article{Mischaikow1999,
  author  = {Mischaikow, Konstantin and Mrozek, Marian and Reiss, James and
             Szymczak, Andrzej},
  title   = {Construction of Symbolic Dynamics from Experimental Time Series},
  journal = {Physical Review Letters},
  volume  = {82},
  number  = {6},
  pages   = {1144--1147},
  year    = {1999},
  doi     = {10.1103/PhysRevLett.82.1144}
}

@article{ArbabiMezic2017,
  author  = {Arbabi, Hassan and Mezi{\'c}, Igor},
  title   = {Ergodic Theory, Dynamic Mode Decomposition, and Computation of
             Spectral Properties of the {K}oopman Operator},
  journal = {SIAM Journal on Applied Dynamical Systems},
  volume  = {16},
  number  = {4},
  pages   = {2096--2126},
  year    = {2017},
  doi     = {10.1137/17M1125236}
}

@article{ColbrookTownsend2024,
  author  = {Colbrook, Matthew J. and Townsend, Alex},
  title   = {Rigorous Data-Driven Computation of Spectral Properties of
             {K}oopman Operators for Dynamical Systems},
  journal = {Communications on Pure and Applied Mathematics},
  volume  = {77},
  number  = {1},
  pages   = {221--283},
  year    = {2024},
  doi     = {10.1002/cpa.22125}
}

@article{BudisicMohrMezic2012,
  author  = {Budi{\v{s}}i{\'c}, Marko and Mohr, Ryan and Mezi{\'c}, Igor},
  title   = {Applied {K}oopmanism},
  journal = {Chaos: An Interdisciplinary Journal of Nonlinear Science},
  volume  = {22},
  number  = {4},
  pages   = {047510},
  year    = {2012},
  doi     = {10.1063/1.4772195}
}

@article{KordaMezic2018,
  author  = {Korda, Milan and Mezi{\'c}, Igor},
  title   = {On Convergence of Extended Dynamic Mode Decomposition
             to the {K}oopman Operator},
  journal = {Journal of Nonlinear Science},
  volume  = {28},
  number  = {2},
  pages   = {687--710},
  year    = {2018},
  doi     = {10.1007/s00332-017-9423-0}
}

@article{SauerYorkeCasdagli1991,
  author  = {Sauer, Tim and Yorke, James A. and Casdagli, Martin},
  title   = {Embedology},
  journal = {Journal of Statistical Physics},
  volume  = {65},
  number  = {3--4},
  pages   = {579--616},
  year    = {1991},
  doi     = {10.1007/BF01053745}
}

@incollection{Takens1981,
  author    = {Takens, Floris},
  title     = {Detecting Strange Attractors in Turbulence},
  booktitle = {Dynamical Systems and Turbulence, Warwick 1980},
  editor    = {Rand, David A. and Young, Lai-Sang},
  series    = {Lecture Notes in Mathematics},
  volume    = {898},
  pages     = {366--381},
  publisher = {Springer, Berlin},
  year      = {1981},
  doi       = {10.1007/BFb0091924}
}

@article{BerryDas2025,
  author  = {Berry, Tyrus and Das, Suddhasattwa},
  title   = {Limits of Learning Dynamical Systems},
  journal = {SIAM Review},
  volume  = {67},
  number  = {1},
  pages   = {107--137},
  year    = {2025},
  doi     = {10.1137/24M1696974}
}

@incollection{Takens2010,
  author    = {Takens, Floris},
  title     = {Reconstruction Theory and Nonlinear Time Series Analysis},
  booktitle = {Handbook of Dynamical Systems},
  editor    = {Broer, Henk W. and Hasselblatt, Boris and Takens, Floris},
  publisher = {Elsevier},
  year      = {2010},
  volume    = {3},
  chapter   = {7},
  pages     = {347--377},
  doi       = {10.1016/S1874-575X(10)00315-2},
}

@incollection{Guckenheimer2002,
  author    = {Guckenheimer, John},
  title     = {Numerical Analysis of Dynamical Systems},
  booktitle = {Handbook of Dynamical Systems},
  editor    = {Fiedler, Bernold},
  publisher = {Elsevier},
  year      = {2002},
  volume    = {2},
  chapter   = {8},
  pages     = {347--390},
  doi       = {10.1016/S1874-575X(02)80029-7},
}

@article{Duan2023,
  author    = {Duan, Xiaoyu and Rubin, Jonathan E. and Swigon, David},
  title     = {Qualitative inverse problems: mapping data to the features of trajectories and parameter values of an ODE model},
  journal   = {Inverse Problems},
  year      = {2023},
  volume    = {39},
  number    = {7},
  pages     = {075002},
  doi       = {10.1088/1361-6420/acd414},
}

@article{Engl2009,
  author    = {Heinz W.\ Engl and Christiane Flamm and Philipp K{\"u}gler
               and James Lu and Stefan M{\"u}ller and Peter Schuster},
  title     = {Inverse Problems in Systems Biology},
  journal   = {Inverse Problems},
  year      = {2009},
  volume    = {25},
  number    = {12},
  pages     = {123014},
  doi       = {10.1088/0266-5611/25/12/123014},
}

@misc{Shumaylov2026,
	archiveprefix = {arXiv},
	author = {Zakhar Shumaylov and Peter Zaika and Philipp Scholl and Gitta Kutyniok and Lior Horesh and Carola-Bibiane Sch{\"o}nlieb},
	doi = {10.48550/arXiv.2511.08860},
	eprint = {2511.08860},
	primaryclass = {math.DS},
	title = {When is a System Discoverable from Data? Discovery Requires Chaos},
	year = {2026}}

@article{Stanhope2014,
	author = {Shelby Stanhope and Jonathan E. Rubin and David Swigon},
	doi = {10.1137/130937913},
	journal = {{SIAM} Journal on Applied Dynamical Systems},
	number = {4},
	pages = {1792--1815},
	title = {Identifiability of Linear and Linear-in-Parameters Dynamical Systems from a Single Trajectory},
	volume = {13},
	year = {2014}}

@article{Qiu2022,
	author = {Xing Qiu and Tao Xu and Babak Soltanalizadeh and Hulin Wu},
	doi = {10.1016/j.amc.2022.127260},
	journal = {Applied Mathematics and Computation},
	pages = {127260},
	title = {Identifiability analysis of linear ordinary differential equation systems with a single trajectory},
	volume = {430},
	year = {2022}}

@inproceedings{Casolo2025,
	author = {Casolo, Cecilia and Becker, S{\"o}ren and  Kilbertus, Niki},
	booktitle = {14th International Conference on Learning Representations ({ICLR} 2026)},
	title = {Identifiability Challenges in Sparse Linear Ordinary Differential Equations},
	url = {https://openreview.net/forum?id=BYBKqpZteT},
	year = {2026}
}

@article{Scholl2022,
	author = {Philipp Scholl and Aras Bacho and Holger Boche and Gitta Kutyniok},
	doi = {10.1007/s10994-026-07068-0},
	journal = {Machine Learning},
	note = {Preprint arXiv:2210.08342 (2022)},
	number = {6},
	pages = {139},
	title = {Symbolic Recovery of Differential Equations: The Identifiability Problem},
	volume = {115},
	year = {2026}}

@inproceedings{Scholl2023,
	author = {Philipp Scholl and Aras Bacho and Holger Boche and Gitta Kutyniok},
	booktitle = {{ICASSP} 2023 -- 2023 {IEEE} International Conference on Acoustics, Speech and Signal Processing ({ICASSP})},
	doi = {10.1109/ICASSP49357.2023.10095017},
	pages = {1--5},
	publisher = {{IEEE}},
	title = {The Uniqueness Problem of Physical Law Learning},
	year = {2023}
}

@inproceedings{Hauger2024,
	author = {Hillary Hauger and Philipp Scholl and Gitta Kutyniok},
	booktitle = {{ICASSP} 2025 -- 2025 {IEEE} International Conference on Acoustics, Speech and Signal Processing ({ICASSP})},
	doi = {10.1109/ICASSP49660.2025.10887720},
	pages = {1--5},
	publisher = {{IEEE}},
	title = {Robust Identifiability for Symbolic Recovery of Differential Equations},
	year = {2025}
}

@book{Brunton2022,
	author = {Steven L. Brunton and J. Nathan Kutz},
	doi = {10.1017/9781009089517},
	edition = {2nd},
	publisher = {Cambridge University Press},
	title = {Data-Driven Science and Engineering: {M}achine Learning, Dynamical Systems, and Control},
	year = {2022}}

@incollection{Colbrook:2026aa,
	author = {Matthew Colbrook and Zlatko Drma{\v c} and Andrew Horning},
	booktitle = {Operator Theory},
	doi = {10.1007/978-3-032-16356-1_126},
	edition = {2nd},
	editor = {Daniel Alpay and Fabrizio Colombo and Irene Sabadini},
	pages = {3247--3295},
	publisher = {Springer Verlag},
	title = {An Introductory Guide to Koopman Learning},
	year = {2026}}

@book{Kepler1609,
	address = {Heidelberg},
	author = {Johannes Kepler},
	doi = {10.3931/e-rara-558},
	publisher = {Voegelin},
	title = {{Astronomia Nova}},
	url = {https://doi.org/10.3931/e-rara-558},
	year = {1609}}

@book{Newton1687,
	address = {London},
	author = {Isaac Newton},
	doi = {10.3931/e-rara-440},
	publisher = {Jussu Societatis Regiae ac Typis Josephi Streater},
	title = {{Philosophiae Naturalis Principia Mathematica}},
	url = {https://doi.org/10.3931/e-rara-440},
	year = {1687}}

@book{Fourier1822,
	address = {Paris},
	author = {Jean Baptiste Joseph Fourier},
	doi = {10.3931/e-rara-19706},
	publisher = {Chez Firmin Didot, p{\`e}re et fils},
	title = {Th{\'e}orie analytique de la chaleur},
	url = {https://doi.org/10.3931/e-rara-19706},
	year = {1822}}

@book{Ohm1827,
	address = {Berlin},
	author = {Georg Simon Ohm},
	doi = {10.3931/e-rara-4050},
	publisher = {Riemann},
	title = {{D}ie galvanische {K}ette, mathematisch bearbeitet},
	url = {https://doi.org/10.3931/e-rara-4050},
	year = {1827}}

@article{Fick1855,
	author = {Adolf Fick},
	doi = {10.1002/andp.18551700105},
	journal = {Annalen der Physik},
	number = {1},
	pages = {59--86},
	title = {{U}eber {D}iffusion},
	volume = {170},
	year = {1855}}

@article{HoKalman1966,
	author = {B. L. Ho and Rudolf E. K{\'a}lm{\'a}n},
	doi = {10.1524/auto.1966.14.112.545},
	journal = {at -- Automatisierungstechnik},
	note = {Originally published in \emph{Regelungstechnik}; back-file registered by the publisher under the journal's current title},
	number = {1--12},
	pages = {545--548},
	title = {Effective construction of linear state-variable models from input/output functions},
	volume = {14},
	year = {1966}}

@article{BellmanAstrom1970,
	author = {Richard Bellman and Karl Johan {\AA}str{\"o}m},
	doi = {10.1016/0025-5564(70)90132-X},
	journal = {Mathematical Biosciences},
	number = {3--4},
	pages = {329--339},
	title = {On structural identifiability},
	volume = {7},
	year = {1970}}

@article{Langley1981,
	author = {Pat Langley},
	doi = {10.1111/j.1551-6708.1981.tb00869.x},
	journal = {Cognitive Science},
	number = {1},
	pages = {31--54},
	title = {Data-Driven Discovery of Physical Laws},
	volume = {5},
	year = {1981}}

@book{Langley1987,
	address = {Cambridge, MA},
	author = {Pat Langley and Herbert A. Simon and Gary L. Bradshaw and Jan M. Zytkow},
	doi = {10.7551/mitpress/6090.001.0001},
	isbn = {9780262121163},
	publisher = {{MIT} Press},
	title = {Scientific Discovery: Computational Explorations of the Creative Processes},
	year = {1987}}

@article{NarendraParthasarathy1990,
	author = {Kumpati S. Narendra and Kannan Parthasarathy},
	doi = {10.1109/72.80202},
	journal = {{IEEE} Transactions on Neural Networks},
	number = {1},
	pages = {4--27},
	title = {Identification and control of dynamical systems using neural networks},
	volume = {1},
	year = {1990}}

@article{BongardLipson2007,
	author = {Josh Bongard and Hod Lipson},
	doi = {10.1073/pnas.0609476104},
	journal = {Proceedings of the National Academy of Sciences},
	number = {24},
	pages = {9943--9948},
	title = {Automated reverse engineering of nonlinear dynamical systems},
	volume = {104},
	year = {2007}}

@article{Simpkins2012,
	author = {Alex Simpkins},
	doi = {10.1109/MRA.2012.2192817},
	journal = {{IEEE} Robotics \& Automation Magazine},
	number = {2},
	pages = {95--96},
	title = {System Identification: Theory for the User, 2nd Edition ({L}jung, {L}.; 1999) [{O}n the {S}helf]},
	volume = {19},
	year = {2012}}

@article{YuWang2024,
	author = {Rose Yu and Rui Wang},
	doi = {10.1073/pnas.2311808121},
	journal = {Proceedings of the National Academy of Sciences},
	number = {27},
	pages = {e2311808121},
	title = {Learning dynamical systems from data: An introduction to physics-guided deep learning},
	volume = {121},
	year = {2024}}

@article{North2023,
	author = {Joshua S. North and Christopher K. Wikle and Erin M. Schliep},
	doi = {10.1111/insr.12554},
	journal = {International Statistical Review},
	number = {3},
	pages = {464--492},
	title = {A Review of Data-Driven Discovery for Dynamic Systems},
	volume = {91},
	year = {2023}}

@article{Mezic2005,
	author = {Igor Mezi{\'c}},
	doi = {10.1007/s11071-005-2824-x},
	journal = {Nonlinear Dynamics},
	number = {1},
	pages = {309--325},
	title = {Spectral Properties of Dynamical Systems, Model Reduction and Decompositions},
	volume = {41},
	year = {2005}}

@article{Schmid2010,
	author = {Peter J. Schmid},
	doi = {10.1017/S0022112010001217},
	journal = {Journal of Fluid Mechanics},
	pages = {5--28},
	title = {Dynamic mode decomposition of numerical and experimental data},
	volume = {656},
	year = {2010}}

@book{Kutz2016,
	address = {Philadelphia, PA},
	author = {J. Nathan Kutz and Steven L. Brunton and Bingni W. Brunton and Joshua L. Proctor},
	doi = {10.1137/1.9781611974508},
	publisher = {Society for Industrial and Applied Mathematics},
	title = {Dynamic Mode Decomposition: Data-Driven Modeling of Complex Systems},
	year = {2016}}

@article{Raissi2018,
	author = {Maziar Raissi},
	journal = {Journal of Machine Learning Research},
	number = {25},
	pages = {1--24},
	title = {Deep Hidden Physics Models: Deep Learning of Nonlinear Partial Differential Equations},
	url = {https://www.jmlr.org/papers/v19/18-046.html},
	volume = {19},
	year = {2018}}

@inproceedings{Chen2018,
	author = {Ricky T. Q. Chen and Yulia Rubanova and Jesse Bettencourt and David K. Duvenaud},
	booktitle = {32nd International Conference on Neural Information Processing System ({NIPS'18})},
	pages = {6572--6583},
	title = {Neural Ordinary Differential Equations},
	url = {https://dl.acm.org/doi/10.5555/3327757.3327764},
	volume = {31},
	year = {2018}
}

@inproceedings{Greydanus2019,
	author = {Samuel Greydanus and Misko Dzamba and Jason Yosinski},
	booktitle = {33rd International Conference on Neural Information Processing Systems ({NIPS'19})},
	pages = {15379--15389 (Article No.: 1378)},
	title = {Hamiltonian Neural Networks},
	url = {https://dl.acm.org/doi/10.5555/3454287.3455665},
	volume = {32},
	year = {2019}
}

@misc{Cranmer2020,
	archiveprefix = {arXiv},
	author = {Miles Cranmer and Sam Greydanus and Stephan Hoyer and Peter Battaglia and David Spergel and Shirley Ho},
	doi = {10.48550/arXiv.2003.04630},
	eprint = {2003.04630},
	primaryclass = {cs.LG},
	title = {Lagrangian Neural Networks},
	url = {https://arxiv.org/abs/2003.04630},
	year = {2020}}

@inproceedings{Bilos2021,
	author = {Marin Bilo{\v{s}} and Johanna Sommer and Syama Sundar Rangapuram and Tim Januschowski and Stephan G{\"u}nnemann},
	booktitle = {Advances in Neural Information Processing Systems},
	pages = {21325--21337},
	title = {Neural Flows: Efficient Alternative to Neural {ODE}s},
	url = {https://dl.acm.org/doi/10.5555/3540261.3541892},
	volume = {34},
	year = {2021}}

@misc{Canizares2024,
	archiveprefix = {arXiv},
	author = {Priscilla Canizares and Davide Murari and Carola-Bibiane Sch{\"o}nlieb and Ferdia Sherry and Zakhar Shumaylov},
	doi = {10.48550/arXiv.2412.16787},
	eprint = {2412.16787},
	primaryclass = {cs.LG},
	title = {Symplectic Neural Flows for Modeling and Discovery},
	url = {https://arxiv.org/abs/2412.16787},
	year = {2024}}

@article{Lu2021,
	author = {Lu Lu and Pengzhan Jin and Guofei Pang and Zhongqiang Zhang and George Em Karniadakis},
	doi = {10.1038/s42256-021-00302-5},
	journal = {Nature Machine Intelligence},
	number = {3},
	pages = {218--229},
	title = {Learning nonlinear operators via {D}eep{ON}et based on the universal approximation theorem of operators},
	volume = {3},
	year = {2021}}

@article{Kovachki2023,
	author = {Nikola Kovachki and Zongyi Li and Burigede Liu and Kamyar Azizzadenesheli and Kaushik Bhattacharya and Andrew Stuart and Anima Anandkumar},
	journal = {Journal of Machine Learning Research},
	number = {89},
	pages = {1--97},
	title = {Neural Operator: Learning Maps Between Function Spaces With Applications to {PDE}s},
	url = {https://www.jmlr.org/papers/v24/21-1524.html},
	volume = {24},
	year = {2023}}

@inproceedings{Li2021,
	author = {Zongyi Li and Nikola Borislavov Kovachki and Kamyar Azizzadenesheli and Burigede Liu and Kaushik Bhattacharya and Andrew M. Stuart and Anima Anandkumar},
	booktitle = {International Conference on Learning Representations ({ICLR})},
	title = {{F}ourier Neural Operator for Parametric Partial Differential Equations},
	url = {https://openreview.net/forum?id=c8P9NQVtmnO},
	year = {2021}}

@inproceedings{Kurth2023,
	address = {New York, NY, USA},
	author = {Thorsten Kurth and Shashank Subramanian and Peter Harrington and Jaideep Pathak and Morteza Mardani and David Hall and Andrea Miele and Karthik Kashinath and Anima Anandkumar},
	booktitle = {Proceedings of the Platform for Advanced Scientific Computing Conference ({PASC} '23)},
	doi = {10.1145/3592979.3593412},
	pages = {1--11},
	publisher = {{ACM}},
	title = {{F}our{C}ast{N}et: Accelerating Global High-Resolution Weather Forecasting Using Adaptive {F}ourier Neural Operators},
	year = {2023}}

@article{Gopakumar2024,
	author = {Vignesh Gopakumar and Stanislas Pamela and Lorenzo Zanisi and Zongyi Li and Ander Gray and Daniel Brennand and Nitesh Bhatia and Gregory Stathopoulos and Matt Kusner and Marc Peter Deisenroth and Anima Anandkumar and {the JOREK Team} and {MAST Team}},
	doi = {10.1088/1741-4326/ad313a},
	journal = {Nuclear Fusion},
	number = {5},
	pages = {056025},
	title = {Plasma surrogate modelling using {F}ourier neural operators},
	volume = {64},
	year = {2024}}

@misc{Vafa2025,
	archiveprefix = {arXiv},
	author = {Keyon Vafa and Peter G. Chang and Ashesh Rambachan and Sendhil Mullainathan},
	doi = {10.48550/arXiv.2507.06952},
	eprint = {2507.06952},
	primaryclass = {cs.LG},
	title = {What Has a Foundation Model Found? {U}sing Inductive Bias to Probe for World Models},
	url = {https://arxiv.org/abs/2507.06952},
	year = {2025}}

@inproceedings{LaCava:2021aa,
	author = {William G. La Cava and Patryk Orzechowski and Bogdan Burlacu and Fabr{\'\i}cio Olivetti de Fran{\c{c}}a and Marco Virgolin and Ying Jin and Michael Kommenda and Jason H. Moore},
	booktitle = {Proceedings of the Neural Information Processing Systems Track on Datasets and Benchmarks},
	title = {Contemporary Symbolic Regression Methods and their Relative Performance},
	url = {https://openreview.net/forum?id=cyZtOENkAOY},
	volume = {1},
	year = {2021}}

@misc{Cranmer2023,
	archiveprefix = {arXiv},
	author = {Miles Cranmer},
	doi = {10.48550/arXiv.2305.01582},
	eprint = {2305.01582},
	primaryclass = {astro-ph.IM},
	title = {Interpretable Machine Learning for Science with {P}y{SR} and {S}ymbolic{R}egression.jl},
	url = {https://arxiv.org/abs/2305.01582},
	year = {2023}}

@inproceedings{Petersen2021,
	author = {Brenden K. Petersen and Mikel Landajuela and T. Nathan Mundhenk and Claudio P. Santiago and Sookyung Kim and Joanne Taery Kim},
	booktitle = {9th International Conference on Learning Representations ({ICLR} 2021)},
	title = {Deep Symbolic Regression: Recovering Mathematical Expressions from Data via Risk-Seeking Policy Gradients},
	url = {https://openreview.net/forum?id=m5Qsh0kBQG},
	year = {2021}}

@article{UdrescuTegmark2020,
	author = {Silviu-Marian Udrescu and Max Tegmark},
	doi = {10.1126/sciadv.aay2631},
	journal = {Science Advances},
	number = {16},
	pages = {eaay2631},
	title = {{AI} {F}eynman: A physics-inspired method for symbolic regression},
	volume = {6},
	year = {2020}}

@article{Cornelio2023,
	author = {Cristina Cornelio and Sanjeeb Dash and Vernon Austel and Tyler R. Josephson and Joao Goncalves and Kenneth L. Clarkson and Nimrod Megiddo and Bachir El Khadir and Lior Horesh},
	doi = {10.1038/s41467-023-37236-y},
	journal = {Nature Communications},
	number = {1},
	pages = {1777},
	title = {Combining data and theory for derivable scientific discovery with {AI}-{D}escartes},
	volume = {14},
	year = {2023}}

@misc{Srivastava2025,
	archiveprefix = {arXiv},
	author = {Karan Srivastava and Sanjeeb Dash and Ryan Cory-Wright and Barry Trager and Cristina Cornelio and Lior Horesh},
	doi = {10.48550/arXiv.2509.23004},
	eprint = {2509.23004},
	primaryclass = {cs.AI},
	title = {Bridging the Gap Between Scientific Laws Derived by {AI} Systems and Canonical Knowledge via Abductive Inference with {AI}-{N}oether},
	url = {https://arxiv.org/abs/2509.23004},
	year = {2025}}

@inproceedings{MartiusLampert2017,
	archiveprefix = {arXiv},
	author = {Georg Martius and Christoph H. Lampert},
	booktitle = {5th International Conference on Learning Representations ({ICLR} 2017), Workshop Track Proceedings},
	eprint = {1610.02995},
	primaryclass = {cs.LG},
	title = {Extrapolation and Learning Equations},
	url = {https://arxiv.org/abs/1610.02995},
	year = {2017}}

@inproceedings{Sahoo2018,
	author = {Subham S. Sahoo and Christoph H. Lampert and Georg Martius},
	booktitle = {Proceedings of the 35th International Conference on Machine Learning},
	editor = {Jennifer Dy and Andreas Krause},
	pages = {4442--4450},
	publisher = {{PMLR}},
	series = {Proceedings of Machine Learning Research},
	title = {Learning Equations for Extrapolation and Control},
	url = {https://proceedings.mlr.press/v80/sahoo18a.html},
	volume = {80},
	year = {2018}}

@inproceedings{Scholl2025,
	author = {Philipp Scholl and Katharina Bieker and Hillary Hauger and Gitta Kutyniok},
	booktitle = {The Thirteenth International Conference on Learning Representations ({ICLR} 2025)},
	title = {{ParFam} -- (Neural Guided) Symbolic Regression via Continuous Global Optimization},
	url = {https://openreview.net/forum?id=8y5Uf6oEiB},
	year = {2025}}

@inproceedings{Biggio2021,
	author = {Luca Biggio and Tommaso Bendinelli and Alexander Neitz and Aurelien Lucchi and Giambattista Parascandolo},
	booktitle = {Proceedings of the 38th International Conference on Machine Learning},
	editor = {Marina Meila and Tong Zhang},
	pages = {936--945},
	publisher = {{PMLR}},
	series = {Proceedings of Machine Learning Research},
	title = {Neural Symbolic Regression that Scales},
	url = {https://proceedings.mlr.press/v139/biggio21a.html},
	volume = {139},
	year = {2021}}

@inproceedings{Kamienny2022,
	author = {Pierre-Alexandre Kamienny and St{\'e}phane d'Ascoli and Guillaume Lample and Fran{\c{c}}ois Charton},
	booktitle = {Advances in Neural Information Processing Systems},
	pages = {10269--10281},
	title = {End-to-end Symbolic Regression with Transformers},
	url = {https://openreview.net/forum?id=GoOuIrDHG_Y},
	volume = {35},
	year = {2022}}

@inproceedings{Grayeli2024,
	author = {Arya Grayeli and Atharva Sehgal and Omar Costilla-Reyes and Miles Cranmer and Swarat Chaudhuri},
	booktitle = {Advances in Neural Information Processing Systems},
	pages = {44678--44709},
	title = {Symbolic Regression with a Learned Concept Library},
	url = {https://openreview.net/forum?id=B7S4jJGlvl},
	volume = {37},
	year = {2024}}

@inproceedings{Shojaee2025,
	author = {Parshin Shojaee and Kazem Meidani and Shashank Gupta and Amir Barati Farimani and Chandan K. Reddy},
	booktitle = {The Thirteenth International Conference on Learning Representations ({ICLR} 2025)},
	title = {{LLM-SR}: Scientific Equation Discovery via Programming with Large Language Models},
	url = {https://openreview.net/forum?id=m2nmp8P5in},
	year = {2025}}

@article{Quade2018,
	author = {Markus Quade and Markus Abel and J. Nathan Kutz and Steven L. Brunton},
	doi = {10.1063/1.5027470},
	journal = {Chaos: An Interdisciplinary Journal of Nonlinear Science},
	number = {6},
	pages = {063116},
	title = {Sparse identification of nonlinear dynamics for rapid model recovery},
	volume = {28},
	year = {2018}}

@inproceedings{Qian2022,
	author = {Zhaozhi Qian and Krzysztof Kacprzyk and Mihaela van der Schaar},
	booktitle = {The Tenth International Conference on Learning Representations ({ICLR} 2022)},
	title = {{D-CODE}: Discovering Closed-form {ODE}s from Observed Trajectories},
	url = {https://openreview.net/forum?id=wENMvIsxNN},
	year = {2022}}

@article{Alessandrini1986,
	author = {Giovanni Alessandrini},
	doi = {10.1007/BF01790543},
	journal = {Annali di Matematica Pura ed Applicata},
	number = {1},
	pages = {265--295},
	title = {An identification problem for an elliptic equation in two variables},
	volume = {145},
	year = {1986}}

@article{Acar1993,
	author = {Robert Acar},
	doi = {10.1137/0331058},
	journal = {{SIAM} Journal on Control and Optimization},
	number = {5},
	pages = {1221--1244},
	title = {Identification of the Coefficient in Elliptic Equations},
	volume = {31},
	year = {1993}}

@article{Knowles2001,
	author = {Ian Knowles},
	doi = {10.1016/S0377-0427(00)00275-2},
	journal = {Journal of Computational and Applied Mathematics},
	number = {1--2},
	pages = {175--194},
	title = {Parameter identification for elliptic problems},
	volume = {131},
	year = {2001}}

@article{CobelliDistefano1980,
	author = {Claudio Cobelli and Joseph J. {DiStefano III}},
	doi = {10.1152/ajpregu.1980.239.1.R7},
	journal = {American Journal of Physiology-Regulatory, Integrative and Comparative Physiology},
	number = {1},
	pages = {R7--R24},
	title = {Parameter and structural identifiability concepts and ambiguities: a critical review and analysis},
	volume = {239},
	year = {1980}}

@article{DistefanoCobelli1980,
	author = {Joseph J. {DiStefano III} and Claudio Cobelli},
	doi = {10.1109/TAC.1980.1102439},
	journal = {{IEEE} Transactions on Automatic Control},
	number = {4},
	pages = {830--833},
	title = {On parameter and structural identifiability: Nonunique observability/reconstructibility for identifiable systems, other ambiguities, and new definitions},
	volume = {25},
	year = {1980}}

@article{Miao2011,
	author = {Hongyu Miao and Xiaohua Xia and Alan S. Perelson and Hulin Wu},
	doi = {10.1137/090757009},
	journal = {{SIAM} Review},
	number = {1},
	pages = {3--39},
	title = {On Identifiability of Nonlinear {ODE} Models and Applications in Viral Dynamics},
	volume = {53},
	year = {2011}}

@article{NguyenWood1982,
	author = {V. V. Nguyen and Eric F. Wood},
	doi = {10.1137/1024002},
	journal = {{SIAM} Review},
	number = {1},
	pages = {34--51},
	title = {Review and Unification of Linear Identifiability Concepts},
	volume = {24},
	year = {1982}}

@article{Messenger2021WeakSINDyODE,
  author  = {Messenger, Daniel A. and Bortz, David M.},
  title   = {Weak {SINDy}: {Galerkin}-Based Data-Driven Model Selection},
  journal = {Multiscale Modeling \& Simulation},
  volume  = {19},
  number  = {3},
  pages   = {1474--1497},
  year    = {2021},
  doi     = {10.1137/20M1343166},
}

@conference{Liu:2021aa,
  author    = {Ziming Liu and Yunyue Chen and Yuanqi Du and Max Tegmark},
  booktitle = {NeurIPS 2021 AI for Science Workshop ({NeurIPS-AI4Science})},
  title     = {Physics-Augmented Learning: A New Paradigm Beyond Physics-Informed Learning},
  url = {https://openreview.net/forum?id=suxElmrPNAY},
  year = {2021}
}

@article{Ahmadi:2023aa,
  author  = {Ahmadi, Amir Ali and El Khadir, Bachir},
  title   = {Learning Dynamical Systems with Side Information},
  journal = {SIAM Review},
  volume  = {65},
  number  = {1},
  pages   = {183--223},
  year    = {2023},
  doi     = {10.1137/20M1388644}
}

@inproceedings{Cranmer:2020aa,
  author    = {Cranmer, Miles and Sanchez-Gonzalez, Alvaro and Battaglia, Peter and Xu, Rui and Cranmer, Kyle and Spergel, David and Ho, Shirley},
  title     = {Discovering Symbolic Models from Deep Learning with Inductive Biases},
  booktitle = {34th International Conference on Neural Information Processing Systems ({NeurIPS 2020})},
  pages = {17429--17442 (Article No.:~1462)},
  url = {https://proceedings.neurips.cc/paper/2020/hash/c9f2f917078bd2db12f23c3b413d9cba-Abstract.html},
  year = {2020}
}

@article{Bartlett:2023aa,
  author  = {Bartlett, Deaglan and Desmond, Harry and Ferreira, Pedro},
  title   = {Exhaustive Symbolic Regression},
  journal = {IEEE Transactions on Evolutionary Computation},
  volume  = {28},
  number  = {4},
  pages   = {950--964},
  year    = {2024},
  doi     = {10.1109/TEVC.2023.3280250}
}

@article{Virgolin:2022aa,
  author  = {Virgolin, Marco and Pissis, Solon P.},
  title   = {Symbolic Regression is {NP}-hard},
  journal = {Transactions on Machine Learning Research},
  volume = {10},
  url = {https://openreview.net/forum?id=LTiaPxqe2e},
  year    = {2022}
}

@book{Koza1992,
  author    = {Koza, John R.},
  title     = {Genetic Programming: On the Programming of Computers
               by Means of Natural Selection},
  publisher = {MIT Press},
  address   = {Cambridge, MA},
  year      = {1992}
}

@article{DonohoElad2003,
  author  = {Donoho, David L. and Elad, Michael},
  title   = {Optimally Sparse Representation in General (Nonorthogonal)
             Dictionaries via $\ell^1$ Minimization},
  journal = {Proceedings of the National Academy of Sciences},
  volume  = {100},
  number  = {5},
  pages   = {2197--2202},
  year    = {2003},
  doi     = {10.1073/pnas.0437847100}
}

@article{CohenDahmenDeVore2009,
  author  = {Cohen, Albert and Dahmen, Wolfgang and DeVore, Ronald},
  title   = {Compressed Sensing and Best $k$-Term Approximation},
  journal = {Journal of the American Mathematical Society},
  volume  = {22},
  number  = {1},
  pages   = {211--231},
  year    = {2009},
  doi     = {10.1090/S0894-0347-08-00610-3}
}

@article{Natarajan1995,
  author  = {Natarajan, B. K.},
  title   = {Sparse Approximate Solutions to Linear Systems},
  journal = {SIAM Journal on Computing},
  volume  = {24},
  number  = {2},
  pages   = {227--234},
  year    = {1995},
  doi     = {10.1137/S0097539792240406}
}

@misc{Fasel2021,
  archiveprefix = {arXiv},
  author    = {Urban Fasel and Eurika Kaiser and J. Nathan Kutz and Bingni W. Brunton and Steven L. Brunton},
  doi       = {10.48550/arXiv.2108.13404},
  eprint    = {2108.13404},
  primaryclass = {math.OC},
  title     = {{SINDy} with Control: A Tutorial},
  year      = {2021}}

@misc{Purnomo2023,
  archiveprefix = {arXiv},
  author    = {Adam Purnomo and Mitsuhiro Hayashibe},
  doi       = {10.48550/arXiv:2209.03248},
  eprint    = {2209.03248},
  primaryclass = {eess.SY},
  title     = {Sparse Identification of Lagrangian for Nonlinear Dynamical Systems via Proximal Gradient Method},
  year = {2022}}

@article{Kaheman:2020aa,
  author    = {Kadierdan Kaheman and Nathan J. Kutz and Steven L. Brunton},
  title     = {{SINDy-PI}: a robust algorithm for parallel implicit sparse identification of nonlinear dynamics},
  journal   = {Proceedings of the Royal Society A: Mathematical, physical and engineering sciences},
  year      = {2020},
  volume    = {476},
  number    = {2242},
  pages     = {20200279},
  doi       = {10.1098/rspa.2020.0279}
}

@article{Fasel:2022aa,
  author    = {Urban Fasel and Nathan J. Kutz and Bingni W. Brunton and Steven L. Brunton},
  title     = {{Ensemble-SINDy}: Robust sparse model discovery in the low-data, high-noise limit, with active learning and control},
  journal   = {Proceedings of the Royal Society A: Mathematical, physical and engineering sciences},
  year      = {2022},
  volume    = {476},
  number    = {2260},
  pages     = {20210904},
  doi       = {10.1098/rspa.2021.0904}
}

@article{Messenger:2021aa,
  author  = {Messenger, Daniel A. and Bortz, David M.},
  title   = {Weak {SINDy} for Partial Differential Equations},
  journal = {Journal of Computational Physics},
  volume  = {443},
  pages   = {110525},
  year    = {2021},
  doi     = {10.1016/j.jcp.2021.110525},
}

@article{Fung:2025aa,
  author    = {Lloyd Fung and Urban Fasel and Matthew Juniper},
  title     = {Rapid {Bayesian} identification of sparse nonlinear dynamics from scarce and noisy data},
  journal   = {Proceedings of the Royal Society A: Mathematical, physical and engineering sciences},
  year      = {2025},
  volume    = {481},
  number    = {2307},
  pages     = {20240200},
  doi       = {10.1098/rspa.2024.0200}
}

@article{Horrocks:2020aa,
  author    = {Jonathan Horrocks and Chris T. Bauch},
  title     = {Algorithmic discovery of dynamic models from infectious disease data},
  journal   = {Scientific Reports},
  year      = {2020},
  volume    = {10},
  pages     = {7061},
  doi       = {10.1038/s41598-020-63877-w}
}

@article{Rudy:2017aa,
  author    = {Samuel H. Rudy and Steven L. Brunton and Joshua L. Proctor and J. Nathan Kutz},
  title     = {Data-driven discovery of partial differential equations},
  journal   = {Science Advances},
  year      = {2017},
  volume    = {3},
  number    = {4},
  pages     = {e1602614},
  doi       = {10.1126/sciadv.1602614}
}

@article{Lagergren:2020aa,
  author    = {John H. Lagergren and John T. Nardini and Michael G. Lavigne and Erica M. Rutter and Kevin B. Flores},
  title     = {Learning partial differential equations for biological transport models from noisy spatio-temporal data},
  journal   = {Proceedings of the Royal Society A: Mathematical, physical and engineering sciences},
  year      = {2020},
  volume    = {476},
  number    = {2234},
  pages     = {20190800},
  doi       = {10.1098/rspa.2019.0800}
}

@article{Kaptanoglu:2021aa,
  author    = {Alan A. Kaptanoglu and Jared L. Callaham and Aleksandr Aravkin and Christopher J. Hansen and Steven L. Brunton},
  title     = {Promoting global stability in data-driven models of quadratic nonlinear dynamics},
  journal   = {Physical Review Fluids},
  year      = {2021},
  volume    = {6},
  pages     = {094401},
  doi       = {10.1103/PhysRevFluids.6.094401}
}

@book{Scherzer:2009aa,
	author = {Scherzer, Otmar and Grasmair, Markus and Grossauer, Harald and Haltmeier, Markus and Lenzen, Frank},
	doi = {10.1007/978-0-387-69277-7},
	publisher = {Springer-Verlag},
	series = {Applied Mathematical Sciences},
	title = {Variational Methods in Imaging},
	volume = {167},
	year = {2009}}

@article{Makke:2024aa,
	author = {Nour Makke and Sanjay Chawla},
	journal = {Artificial Intelligence Review},
	pages = {Article No.:~2},
	title = {Interpretable scientific discovery with symbolic regression: a review},
	volume = {57},
	year = {2024},
   	doi = {10.1007/s10462-023-10622-0}
}

@book{Foucart:2013aa,
    author = {Simon Foucart and Holger Rauhut},
    title = {Mathematical Introduction to Compressive Sensing},
    publisher = {Birkh{\"a}user},
    series = {Applied and Numerical Harmonic Analysis},
    year = {2013},
    doi = {10.1007/978-0-8176-4948-7}
}

@article{Candes:2006aa,
  author    = {Emanuel J. Cand{\'e}s and Justin Romberg and Terrence Tao},
  title     = {Robust uncertainty principles: exact signal reconstruction from highly incomplete frequency information},
  journal   = {IEEE Transactions on Information Theory},
  year      = {2006},
  volume    = {52},
  number    = {2},
  pages     = {489--509},
  doi       = {10.1109/TIT.2005.862083}
}

@article{Donoho:1992aa,
  author    = {David L. Donoho and Benjamin F. Logan},
  title     = {Signal Recovery and the Large Sieve},
  journal   = {SIAM Journal on Applied Mathematics},
  year      = {1992},
  volume    = {52},
  number    = {2},
  pages     = {577--591},
  doi       = {10.1137/0152031}
}

@article{Santosa:1986aa,
  author    = {Fadil Santosa and William W. Symes},
  title     = {Linear Inversion of Band-Limited Reflection Seismograms},
  journal   = {SIAM Journal on Scientific and Statistical Computing},
  year      = {1986},
  volume    = {7},
  number    = {4},
  pages     = {1307--1330},
  doi       = {10.1137/0907087}
}

@incollection{Beurling:1938aa,
  author    = {Arne Beurling},
  editor    = {Lennart Carleson and John Neuberger and Paul Malliavin and John Wermer},
  title     = {Sur les int{\'e}grales de {Fourier} absolument convergentes et leur application {\`a} une transformation fonctionnelle},
  booktitle = {Collected Works of {Arne Beurling}. Volume 2: Harmonic Analysis},
  pages     = {39--60},
  publisher = {Birkh{\"a}user},
  address   = {Boston},
  note      = {Paper appeard initially in 9th Scandinavian Mathematical Congress, Helsingfors, pp.~345--366, 1938},
  year      = {1989}
}

@article{Brunton:2016aa,
  author    = {Steven L. Brunton and Joshua L. Proctor and J. Nathan Kutz},
  title     = {Discovering Governing Equations from Data by Sparse Identification of Nonlinear Dynamical Systems},
  journal   = {Proceedings of the National Academy of Sciences},
  year      = {2016},
  volume    = {113},
  number    = {15},
  pages     = {3932--3937},
  doi       = {10.1073/pnas.1517384113},
}

@misc{Hillar:2018aa,
  archiveprefix = {arXiv},
  author    = {Christopher Hillar and Friedrich Sommer},
  doi       = {10.48550/arXiv.1210.7273},
  eprint = {1210.7273},
  primaryclass = {nlin.AO},
  title     = {Comment on the article ``Distilling free-form natural laws from experimental data''},
  year = {2018}}

@article{Schmidt:2009aa,
  author    = {Michael Schmidt and Hod Lipson},
  title     = {Distilling Free-Form Natural Laws from Experimental Data},
  journal   = {Science},
  year      = {2009},
  volume    = {324},
  number    = {5923},
  pages     = {81--85},
  doi       = {10.1126/science.1165893}
}

@article{Weinan:2021aa,
	author = {Weinan E},
	title = {The Dawning of a New Era in Applied Mathematics},
	journal = {AMS Notices},
	volume = {68},
    number = {4},
	pages = {565--571},
	doi = {10.1090/noti2259},
	year = {2021}}

@article{Dukes:2011aa,
	author = {Joseph D. Dukes and Paul Whitley and Andrew D Chalmers},
	doi = {10.1016/j.tim.2003.10.002},
	journal = {BMC Cell Biology},
	pages = {Article no.:~43},
	title = {The {MDCK} variety pack: choosing the right strain},
	volume = {12},
	year = {2011}}

@article{Hurley:2003aa,
	author = {Bryan P. Hurley and Beth A. McCormick},
	doi = {10.1016/j.tim.2003.10.002},
	journal = {Trends in Microbiology},
	pages = {562--569},
	title = {Translating tissue culture results into animal models: the case of {Salmonella typhimurium}},
	volume = {11},
    number = {12},
	year = {2003}}

@article{Chen:2013aa,
	author = {Tsai-Wen Chen and Trevor J. Wardill and Yi Sun and Stefan R. Pulver and Sabine L. Renninger and Amy Baohan and Eric R. Schreiter and Rex A. Kerr and Michael B. Orger and Vivek Jayaraman and Loren L. Looger and Karel Svoboda and Douglas S. Kim},
	doi = {10.1038/nature12354},
	journal = {Nature},
	pages = {295--300},
	title = {Ultrasensitive fluorescent proteins for imaging neuronal activity},
	volume = {499},
	year = {2013}}

@incollection{Webb:2012aa,
	author = {Donna J. Webb and Claire M. Brown},
	booktitle = {Cell Imaging Techniques: Methods and Protocols},
	chapter = {2},
	doi = {10.1007/978-1-62703-056-4_2},
	editor = {Douglas J. Taatjes and J{\"u}rgen Roth},
	pages = {29--59},
	publisher = {Springer-Verlag},
	series = {Methods in Molecular Biology},
	title = {Epi-Fluorescence Microscopy},
	volume = {931},
	year = {2012}}

@article{Zhang:2023aa,
	author = {Yan Zhang and M{\'a}rton R{\'o}zsa and Yajie Liang and Daniel Bushey and Ziqiang Wei and Jihong Zheng and Daniel Reep and Gerard Joey Broussard and Arthur Tsang and Getahun Tsegaye and Sujatha Narayan and Christopher J. Obara and Jing-Xuan Lim and Ronak Patel and Rongwei Zhang and Misha B. Ahrens and Glenn C. Turner and Samuel S.-H. Wang and Wyatt L. Korff and Eric R. Schreiter and Karel Svoboda and Jeremy P. Hasseman and Ilya Kolb and Loren L. Looger},
	doi = {10.1038/s41586-023-05828-9},
	journal = {Nature},
	pages = {884--891},
	title = {Fast and sensitive {GCaMP} calcium indicators for imaging neural populations},
	volume = {615},
	year = {2023}}

@article{Hsieh:2025aa,
	author = {Huai-Ching Hsieh and Qinghua Han and David Brenes and Kevin W. Bishop and Rui Wang and Yuli Wang and Chetan Poudel and Adam K. Glaser and Benjamin S. Freedman and Joshua C. Vaughan and Nancy L. Allbritton and Jonathan T. C. Liu},
	doi = {10.1038/s41592-025-02647-w},
	journal = {Nature Methods},
	pages = {1167--1190},
	title = {Imaging {3D} cell cultures with optical microscopy},
	volume = {22},
	year = {2025}}

@article{Schnell:2012aa,
	author = {Ulrike Schnell and Freark Dijk and Klaas A. Sjollema and Ben N. G. Giepmans},
	doi = {10.1038/nmeth.1855},
	journal = {Nature Methods},
	pages = {152--158},
	title = {Immunolabeling artifacts and the need for live-cell imaging},
	volume = {9},
	year = {2008}}

@article{Wang:2008aa,
	author = {Yingxiao Wang and John Y.-J. Shyy and Shu Chien},
	doi = {10.1146/annurev.bioeng.010308.161731},
	journal = {Annual Review of Biomedical Engineering},
	pages = {1--38},
	title = {Fluorescence Proteins, Live-Cell Imaging, and Mechanobiology: Seeing Is Believing},
	volume = {10},
	year = {2008}}

@article{Allport:2001aa,
	author = {Jennifer R. Allport and Ralph Weissleder},
	doi = {10.1016/S0301-472X(01)00739-1},
	journal = {Experimental Hematology},
	pages = {1237--1246},
	title = {In vivo imaging of gene and cell therapies},
	volume = {29},
    number = {11},
	year = {2001}}

@article{Zhuang:2026aa,
	author = {Zhong Zhuang and Zhichao Feng and Jie Wang and Xinhui Liu and Laijun Song and Chunhui Sun and Hong Liu and Na Ren},
	doi = {10.34133/research.1085},
	journal = {Research},
	number = {Article id.:~1085},
	title = {Advanced Imaging for Live-Cell Spatiotemporal Monitoring: Technologies and Applications},
	volume = {9},
	year = {2026}}

@article{Birkhoff,
	author = {George D. Birkhoff},
	doi = {10.1073/pnas.17.2.656},
	journal = {Proceedings of the National Academy of Sciences},
	number = {12},
	pages = {656-660},
	title = {Proof of the Ergodic Theorem},
	volume = {17},
	year = {1931}}

@book{BrinGarrett,
	address = {Cambridge},
	author = {Brin, Michael and Stuck, Garrett},
	doi = {10.1017/CBO9780511755316},
	isbn = {0-521-80841-3},
	publisher = {Cambridge University Press},
	title = {Introduction to dynamical systems},
	year = {2002}}

@book{Mane,
	address = {Berlin},
	author = {Ma\~n\'e, Ricardo},
	doi = {10.1007/978-3-642-70335-5},
	isbn = {3-540-15278-4},
	publisher = {Springer-Verlag},
	series = {Ergebnisse der Mathematik und ihrer Grenzgebiete (3) [Results in Mathematics and Related Areas (3)]},
	title = {Ergodic theory and differentiable dynamics},
	volume = {8},
	year = {1987}}

@book{Feller,
	address = {New York-London-Sydney},
	author = {Feller, William},
	edition = {Second},
	isbn = {978-0-471-25709-7},
	publisher = {John Wiley \& Sons},
	series = {Wiley Series in Probability and Statistics},
	title = {An introduction to probability theory and its applications - Volume II},
	year = {1991}}

@book{Billingsley,
	address = {New York},
	author = {Billingsley, Patrick},
	edition = {Third},
	isbn = {0-471-00710-2},
	publisher = {John Wiley \& Sons},
	series = {Wiley Series in Probability and Mathematical Statistics},
	title = {Probability and measure},
	year = {1995}}

@article{Boltzmann:1877,
	author = {Boltzmann, Ludwig},
	journal = {Wiener Berichte},
	pages = {373-435},
	title = {{\"U}ber die Beziehung zwischen dem zweiten Hauptsatze der mechanischen W{\"a}rmetheorie und der Wahrscheinlichkeitsrechnung respektive den S{\"a}tzen {\"u}ber das W{\"a}rmegleichgewicht},
	volume = {76},
	year = {1877}}

@article{Floryan:2022aa,
	author = {Daniel Floryan and Michael D. Graham},
	doi = {10.1038/s42256-022-00575-4},
	journal = {Nature Machine Intelligence},
	pages = {1113-1120},
	title = {Data-driven discovery of intrinsic dynamics},
	volume = {4},
	year = {2022}}

@book{KatokHasselblatt,
	address = {Cambridge},
	author = {Katok, Anatole and Hasselblatt, Boris},
	doi = {10.1017/CBO9780511809187},
	isbn = {0-521-34187-6},
	publisher = {Cambridge University Press},
	series = {Encyclopedia of Mathematics and its Applications},
	title = {Introduction to the modern theory of dynamical systems},
	volume = {54},
	year = {1995}}

@book{GuckHolmes,
	address = {New York},
	author = {Guckenheimer, John and Holmes, Philip},
	doi = {10.1007/978-1-4612-1140-2},
	isbn = {0-387-90819-6},
	publisher = {Springer-Verlag},
	series = {Applied Mathematical Sciences},
	title = {Nonlinear oscillations, dynamical systems, and bifurcations of vector fields},
	volume = {42},
	year = {1990}}

@conference{Kacprzyk:2025aa,
	author = {Krzysztof Kacprzyk and Mihaela van der Schaar},
	booktitle = {13th International Conference on Learning Representations ({ICLR 2025})},
    url = {https://openreview.net/forum?id=kbm6tsICar},
	title = {No Equations Needed: Learning System Dynamics Without Relying on Closed-Form {ODEs}},
	year = {2025}}

@article{Kowalewski:2006aa,
	author = {Jacob M. Kowalewski and Per Uhl{\'e}n and Hiroaki Kitano and Hjalmar Brismar},
	doi = {10.1016/j.mbs.2006.03.001},
	journal = {Mathematical Biosciences},
	pages = {232-249},
	title = {Modeling the impact of store-operated {Ca}${}^{2+}$ entry on intracellular {Ca}${}^{2+}$ oscillations},
	volume = {204},
	year = {2006}}

@article{Lorenz:1963,
	author = {Edward N. Lorenz},
	doi = {10.1175/1520-0469(1963)020<0130:DNF>2.0.CO;2},
	journal = {Journal of the Atmospheric Sciences},
	number = {2},
	pages = {130-141},
	title = {Deterministic Nonperiodic Flow},
	volume = {20},
	year = {1963}}

@article{Newhouse:1974,
	author = {Newhouse, Sheldon},
	doi = {10.1016/0040-9383(74)90034-2},
	journal = {Topology},
	pages = {9-18},
	title = {Diffeomorphisms with infinitely many sinks},
	volume = {13},
	year = {1974}}

@book{PalisTakens,
	address = {Cambridge},
	author = {Palis, Jacob and Takens, Floris},
	isbn = {0-521-39064-8},
	publisher = {Cambridge University Press},
	series = {Cambridge Studies in Advanced Mathematics},
	title = {Hyperbolicity and sensitive chaotic dynamics at homoclinic bifurcations},
	volume = {35},
	year = {1993}}

@book{Petersen,
	address = {Cambridge},
	author = {Petersen, Karl},
	doi = {10.1017/CBO9780511608728},
	isbn = {0-521-23632-0},
	publisher = {Cambridge University Press},
	series = {Cambridge Studies in Advanced Mathematics},
	title = {Ergodic theory},
	volume = {2},
	year = {1983}}

@book{Poincare:1892,
	address = {Paris},
	author = {Poincar{\'e}, Henri},
	publisher = {Gauthier-Villars},
	title = {Les m{\'e}thodes nouvelles de la m{\'e}canique c{\'e}leste},
	url = {https://archive.org/details/lesmthodesnouv001poin},
	year = {1892}}

@article{Qraitem:2020aa,
	author = {Maan Qraitem and Dhanushka Kularatne and Eric Forgoston and Ani M. Hsieh},
	doi = {10.1016/j.physd.2020.132736},
	journal = {Physica D: Nonlinear Phenomena},
	pages = {132736},
	title = {Bridging the gap: Machine learning to resolve improperly modeled dynamics},
	volume = {414},
	year = {2020}}

@article{Smale:1967,
	author = {Smale, Stephen},
	doi = {10.1090/S0002-9904-1967-11798-1},
	journal = {Bulletin of the American Mathematical Society},
	number = {6},
	pages = {747-817},
	title = {Differentiable dynamical systems},
	volume = {73},
	year = {1967}}

@article{Yu:2023aa,
	author = {Rose Yu and Rui Wang},
	doi = {10.1073/pnas.2311808121},
	journal = {Proceedings of the National Academy of Sciences},
	number = {27},
	pages = {e2311808121},
	title = {Learning dynamical systems from data: An introduction to physics-guided deep learning},
	volume = {121},
	year = {2023}}

@article{Zhong:2020aa,
	author = {Ming Zhong and Jason Miller and Mauro Maggioni},
	doi = {10.1016/j.physd.2020.132542},
	journal = {Physica D: Nonlinear Phenomena},
	pages = {132542},
	title = {Data-driven discovery of emergent behaviors in collective dynamics},
	volume = {411},
	year = {2020}}

@article {FriedmanOrnstein1970,
    AUTHOR = {Nathaniel A. Friedman and Donald Ornstein},
     TITLE = {On isomorphism of weak {B}ernoulli transformations},
   JOURNAL = {Advances in Mathematics},
    VOLUME = {5},
      YEAR = {1970},
     PAGES = {365--394},
      ISSN = {0001-8708},
   MRCLASS = {28.70},
  MRNUMBER = {274718},
MRREVIEWER = {M.\ A.\ Akcoglu},
       DOI = {10.1016/0001-8708(70)90010-1}}

@article {Ornstein1970,
    AUTHOR = {Ornstein, Donald},
     TITLE = {Bernoulli shifts with the same entropy are isomorphic},
   JOURNAL = {Advances in Math.},
  FJOURNAL = {Advances in Mathematics},
    VOLUME = {4},
      YEAR = {1970},
     PAGES = {337--352},
      ISSN = {0001-8708},
   MRCLASS = {28.70},
  MRNUMBER = {257322},
MRREVIEWER = {U.\ Krengel},
       DOI = {10.1016/0001-8708(70)90029-0}
}

\end{document}